\newcommand{\Div}{\operatorname{div}}
\newcommand{\bn} {\mathbf{n}}
\newcommand{\bx} {\mathbf{x}}
\newcommand{\by} {\mathbf{y}}
\newcommand{\bp} {\mathbf{p}}
\newcommand{\bs} {\mathbf{s}}
\newcommand{\br} {\mathbf{r}}
\newcommand{\bw} {\mathbf{w}}
\newcommand{\bH} {\mathbf{H}}
\newcommand{\bA} {\mathbf{A}}
\newcommand{\bB} {\mathbf{B}}
\newcommand{\bP} {\mathbf{P}}
\newcommand{\bV} {\mathbf{V}}
\newcommand{\bVt} {\widetilde{\mathbf{V}}}
\newcommand{\deltah} {\delta_h}
\newcommand{\eps} {\varepsilon}
\newcommand{\dO} {{\partial\Omega}}
\def\unabla{\nabla_{\Gamma}}
\def\unablah{\nabla_{\Gamma_h}}
\newcommand{\T}{\mathcal T}
\newcommand{\bI}{\mathbf I}
\newtheorem{remark}{Remark}
\title{A trace finite element method for a class of coupled bulk-interface transport problems
\thanks{Partially supported by NSF through the Division of Mathematical Sciences grant 1315993.}}
\author{Sven Gross\thanks{Institut f\"ur Geometrie und Praktische  Mathematik, RWTH-Aachen
University, D-52056 Aachen, Germany (gross@igpm.rwth-aachen.de)} \and Maxim A. Olshanskii\thanks{Department of Mathematics, University of Houston, Houston, Texas 77204-3008
(molshan@math.uh.edu).}
\and Arnold Reusken\thanks{Institut f\"ur Geometrie und Praktische  Mathematik, RWTH-Aachen
University, D-52056 Aachen, Germany (reusken@igpm.rwth-aachen.de).}
}
\begin{document}

\maketitle

\begin{abstract}
In this  paper we  study a system of advection-diffusion equations  in a bulk domain coupled to an advection-diffusion equation on an embedded surface. Such systems of coupled partial differential equations arise in, for example, the modeling of transport and diffusion of surfactants in two-phase flows. The model considered here accounts for adsorption-desorption of the surfactants  at a sharp interface between two fluids and their transport and diffusion in both fluid phases and along the interface. The paper gives a well-posedness analysis for the system of bulk-surface equations and introduces a finite element method
for its numerical solution. The finite element method is unfitted, i.e., the mesh is not aligned to the interface. The method is based on taking traces of a standard finite element space both on the bulk domains and the embedded surface. The numerical approach allows an implicit definition of the surface as the zero level of a level-set function. Optimal order error estimates are proved
for the finite element method both in  the bulk-surface energy norm  and the $L^2$-norm. The analysis is not restricted to linear finite elements and a piecewise planar reconstruction of the surface, but also  covers the discretization with higher order elements and a higher order surface reconstruction.
\end{abstract}

\section{Introduction}

Coupled bulk-surface or bulk-interface partial \  differential equations arise in many applications, e.g.,  in multiphase fluid dynamics \cite{GReusken2011} and biological applications \cite{Bonito}.
In this paper, we consider a coupled bulk-interface advection-diffusion problem. The problem arises in models describing the behavior of soluble surface active agents (surfactants) that are adsorbed at liquid-liquid interfaces.
For a discussion of physical phenomena related to soluble surfactants in two-phase incompressible flows we refer to the literature, e.g., \cite{GReusken2011,Ravera,Clift,Tasoglu}.

Systems of partial differential equations that couple bulk domain effects with interface (or surface) effects pose  challenges both for the  mathematical analysis of equations and the development and error analysis of numerical methods. These  challenges grow if  phenomena  occur at different physical scales, the coupling is nonlinear or the interface is evolving in time. To our knowledge, the analysis of numerical methods for coupled bulk-surface (convection-)diffusion has been addressed in the literature only very recently. In fact, problems related to the one studied in this paper have been considered only in \cite{BurmanHansbo,ElliottRanner}. 
In these references finite element methods for coupled bulk-surface partial differential equations are proposed and analyzed. In \cite{BurmanHansbo,ElliottRanner} a stationary diffusion problem on a bulk domain is linearly coupled with a stationary diffusion equation on the boundary of this domain. A key difference between the methods in \cite{BurmanHansbo} and \cite{ElliottRanner} is that in the latter boundary \emph{fitted} finite elements are used, whereas in the former \emph{unfitted} finite elements  are applied. Both papers include error analyses of these methods. In the recent paper \cite{Chen2014} a similar coupled surface-bulk system is treated with a  different approach, based on the immersed boundary method. In that paper an evolving surface is considered, but only spatially two-dimensional problems are treated and no theoretical error analysis is given.

In the present paper, as in \cite{BurmanHansbo,ElliottRanner} we restrict to stationary problems and a linear coupling. The results obtained are a starting point for research on other classes of problems, e.g., with an evolving interface, cf. the discussion in section~\ref{sectdisc}.
The two main new contributions of this paper are the following. Firstly, the  class of problems considered is significantly different from the one treated in \cite{BurmanHansbo,ElliottRanner}. We study a problem in which elliptic partial differential equations in  \emph{two} bulk subdomains are coupled to a partial differential equation on a sharp \emph{interface} which separates the two subdomains. In the previous work only a coupling of an elliptic partial differential  in \emph{one} bulk domain with a partial differential equation on the \emph{boundary} of this bulk domain is treated. Furthermore, the  partial differential equations considered in this paper are not pure diffusion equations, but \emph{convection}-diffusion equations. The latter model the transport by advection and diffusion of surfactants. We will briefly address some basic modeling aspects, e.g. related to adsorption (Henry and Langmuir laws), of these coupled equations. The first main new result is the
well-posedness of a weak
formulation of this coupled system.
We introduce suitable function spaces and an appropriate weak formulation of the problem. We derive a Poincare type inequality in a bulk-interface product Sobolev space and show
an inf-sup stability result for the bilinear form of the weak formulation. This then leads to the well-posedness result.
The second main new contribution is the error analysis of a finite element method. We consider an \emph{unfitted} finite element method, which is very similar  to the method presented in \cite{BurmanHansbo}. In the method treated in this paper we do not apply the stabilization technique used in  \cite{BurmanHansbo}.  An unfitted approach is particularly attractive for problems with an evolving interface, which will be studied in a follow-up paper.
 Both interface and bulk
finite element spaces are trace spaces of globally defined continuous finite element functions with respect to a
regular simplicial triangulation of the whole domain. For bulk problems, such finite element techniques have been extensively studied in the
literature on  cut finite element methods or XFEM, cf. e.g. \cite{Hansbo02,Hansbo04,Hansbo2012}. For
PDEs posed on surfaces, the trace finite element method was introduced and studied in \cite{OlshReusken08,ORX,OlshanskiiReusken08,DemlowOlshanskii12}.
In the method that we propose, the smooth interface is approximated by a piecewise smooth one, characterized by the zero level set of a finite element level set function. This introduces a geometric error in the method. The approach
allows  meshes that do not fit to this (approximate) interface and admits implicitly defined interfaces.
The finite element formulation is shown to be well-posed.  We present an error analysis that is general in the sense that finite element polynomials of arbitrary degree are allowed  (in  \cite{BurmanHansbo} only linear finite elements are treated) and that the accuracy of the interface approximation can be varied. The error analysis is rather technical and we aimed at a clear exposition by subdividing the analysis into several steps: the construction of a bijective mapping between
the continuous bulk domains and their numerical approximations, the definition and analysis of extensions of functions off the interface and outside the bulk domains, and the analysis of  consistency terms in an approximate Galerkin orthogonality property. This leads to an optimal order bound for the discretization error in the energy norm of the product space. Finally, by using a suitable adjoint problem, we derive an optimal  order  error
bound in the $L^2$ product norm. Results of numerical experiments are included that illustrate the convergence behavior of the finite element method. A point that we do not address in this paper is the stabilization of the discretization method with respect to the conditioning of the stiffness matrix. From the  literature it  is known that the trace finite element approach results in a poor conditioning of the stiffness matrix. The technique presented in \cite{Hansbo2012,BurmanHansbo} can be used (for linear finite elements) to obtain a stiffness matrix with conditioning properties similar to those of a standard finite element method. We think that this technique is applicable also to the method presented in this paper, but decided not to include it, since the additional stabilization terms would lead to a further increase of technicalities in the analysis. Finally we mention the recent related paper \cite{Deckelnick2013}, in which unfitted finite element techniques,  similar to the
 one used in this paper,
are applied to   surface partial differential equations (without coupling to bulk domains).

\section{Mathematical model}\label{S_model}

In this section we explain the physical background of the coupled bulk-interface model that we treat in this paper.
Consider a two-phase incompressible flow system in which two fluids occupy   subdomains $\Omega_i(t)$, $i=1,2$, of a given domain $\Omega \subset \mathbb{R}^3$.
The corresponding velocity field of the fluids is denoted by $\bw(\bx,t)$, $\bx \in \Omega$, $t\in [0,t_1]$.
For convenience, we assume that $\Omega_1(t)$ is simply connected and strictly contained in $\Omega$ (e.g., a rising droplet) for all $t\in [0,t_1]$. The outward pointing normal from $\Omega_1$ into $\Omega_2$ is denoted by $\bn$.  The sharp interface between the two fluids is denoted by $\Gamma(t)$. Below we write $\Gamma$ instead of $\Gamma(t)$. It is assumed that the fluids are immiscible and that there are no phase changes.  Consequently, the normal components of the fluid velocity are continuous at the interface and  the interface itself is advected with the flow, i.e.,  $V_\Gamma= \bw \cdot \bn$  holds, where $V_\Gamma$ denotes the normal velocity of the interface.   The standard  model for the fluid dynamics in such a system consists of the Navier-Stokes equations, combined with  suitable coupling conditions at the interface.  In the rest of this paper, we assume that the velocity field has smoothness $\bw(\cdot,t) \in  [H^{1,\infty}(\Omega)  \cap H^1(\Gamma)]^3$ and that
$\bw$ \emph{is given}, i.e.,
we do not consider a \emph{two}-way coupling between surfactant transport and fluid dynamics. The fluid is assumed incompressible:
\begin{equation}\label{div_w} \Div\bw=0   \quad \text{in}~~\Omega.
 \end{equation}

Consider a surfactant that is soluble in both phases and can be adsorbed and desorbed at the interface.
  The surfactant \emph{volume} concentration (i.e., the one in the bulk phases) is denoted by $u$, $u_i=u|_{\Omega_i}$, $i=1,2$. The surfactant \emph{area} concentration on $\Gamma$ is denoted by $v$. Change of the surfactant concentration happens due to convection by the velocity field $\bw$, diffusive fluxes in $\Omega_i$, a diffusive flux on $\Gamma$ and fluxes coming from adsorption and desorption. The net flux (per surface area) due to adsorption/desorption between $\Omega_i$ and $\Gamma$ is denoted by $j_{i,a}-j_{i,d}$. The total net flux is $j_a-j_d= \sum_{i=1}^2 (j_{i,a}-j_{i,d})$.
Mass conservation in a control volume (transported by the flow field) that is strictly contained in $\Omega_i$ results in the bulk convection-diffusion equation
\begin{equation} \label{diffeq1}
 \frac{\partial u}{\partial t} + \bw \cdot \nabla u- D_i \Delta u = 0 \quad \text{in}~ \Omega_i= \Omega_i(t), ~i=1,2.
\end{equation}
Here $D_i >0$ denotes the bulk diffusion coefficient, which is assumed to be constant in $\Omega_i$.
Mass conservation in a control area (transported by the flow field) that is completely contained in $\Gamma$ results in the surface convection-diffusion equation (cf.~\cite{GReusken2011}):
\begin{equation} \label{diffeq2}
 \dot v + (\Div_\Gamma \bw)v - D_\Gamma \Delta_\Gamma v = j_a-j_d  \quad \text{on}~~\Gamma= \Gamma(t),
\end{equation}
where $\dot v = \frac{\partial v}{\partial t} + \bw \cdot \nabla v$ denotes the material derivative and $\Delta_\Gamma$,  $\Div_\Gamma$  the Laplace-Beltrami and surface divergence operators, respectively. The interface diffusion coefficient $D_\Gamma >0$ is assumed to be a constant.

We assume that transport of surfactant between the two phases can only occur via adsorption/desorption.  Due to $V_\Gamma = \bw \cdot \bn$, the mass flux through $\Gamma$ equals the diffusive mass flux. Hence, mass conservation in a  control volume (transported by the flow field) that lies in $\Omega_i$ and with part of its boundary on  $\Gamma$ results in the mass balance equations
\begin{equation}\label{eq3}
   (-1)^{i} D_i \bn \cdot \nabla u_i = j_{i,a} -j_{i,d}, \quad i=1,2.
\end{equation}
The sign factor $(-1)^{i}$ accounts for the fact that the normal $\bn$ is outward pointing from $\Omega_1$ into $\Omega_2$. Summing these relations over $i=1,2$, yields
\[
   j_{a} -j_{d}=- [D \bn \cdot \nabla u]_\Gamma,
\]
where $[w]_\Gamma = (w_1)_{|\Gamma}-(w_2)_{|\Gamma}$ denotes the jump of $w$ across $\Gamma$. 

  To close the system of equations, we need constitutive equations for modeling the adsorption/desorption. A standard model, cf.~\cite{Ravera}, is as follows:
\begin{equation} \label{eq4}
 j_{i,a}-j_{i,d}=  k_{i,a} g_i(v) u_i - k_{i,d} f_i(v),\quad\text{on}~\Gamma,  
\end{equation}
with $k_{i,a}$, $k_{i,d}$ positive adsorption and desorption coefficients that describe the kinetics. We consider two fluids having similar adsorption/desorption behavior in the sense that the coefficients  $k_{i,a}$, $k_{i,d}$ may depend on $i$, but $g_i(v)=g(v)$, $f_i(v)=f(v)$ for $i=1,2$.  Basic choices for $g$, $f$ are the following:
\begin{align}
  & g(v)=1, \quad f(v)=v \quad \text{(Henry)} \label{Henry}\\
  & g(v)=1- \frac{v}{v_{\infty}},\quad f(v)=v \quad \text{(Langmuir)} \label{Langmuir},
\end{align}
where $v_\infty$ is a constant that quantifies the maximal concentration on $\Gamma$. Further options are given in \cite{Ravera}. Note that in the Langmuir case we have a nonlinearity due to the term $ v u_i $. 
Combining \eqref{diffeq1}, \eqref{diffeq2}, \eqref{eq3} and \eqref{eq4} gives a closed model. For the mathematical analysis it is convenient to  reformulate these equations in dimensionless variables. Let $L$, $W$ be appropriately defined length and velocity scales and $T=L/W$ the corresponding time scale. Furthermore, $U$ and $V$ are typical reference volume and area concentrations. The equations above can be reformulated in the dimensionless variables $\tilde x = x/L$, $\tilde t= t/T$, $\tilde u_i=u_i/U$, $\tilde v =v /V$, $\tilde \bw= \bw/W$. This results in the following system of \emph{coupled bulk-interface convection-diffusion equations}, where we use the notation $x,t,u_i,v,\bw$ also for the transformed variables:
\begin{equation} \label{total}
 \begin{split}
   \frac{\partial u}{\partial t} + \bw \cdot \nabla u- \nu_i \Delta u  & = 0 \quad \text{in}~ \Omega_i(t), ~i=1,2, \\
\dot v + (\Div_\Gamma \bw) v - \nu_\Gamma \Delta_\Gamma v  & =- K [ \nu \bn \cdot \nabla u]_\Gamma   \quad \text{on}~~\Gamma(t), \\
  (-1)^{i} \nu_i \bn \cdot \nabla u_i & = \tilde k_{i,a} \tilde g(v) u_i- \tilde k_{i,d} v \quad \text{on}~~\Gamma(t), \quad i=1,2, \\
\text{with}~~ \nu_i=\frac{D_i}{LW}, ~~\nu_\Gamma=\frac{D_\Gamma}{LW}, ~~K&=\frac{LU}{V},~~ \tilde k_{i,a} = \frac{T}{L}k_{i,a},~~\tilde k_{i,d}=\frac{T}{K}k_{i,d},
\end{split}
\end{equation}
and $\tilde g(v)=1$ (Henry) or $\tilde g(v)=1-\frac{V}{v_\infty} v$ (Langmuir). This model
 has to be complemented by suitable initial conditions for $u,v$ and boundary conditions on $\partial \Omega$ for $u$. The resulting model is often used in the literature for describing surfactant behavior, e.g.~\cite{Eggleton,Tasoglu,Quang,Chen2014}. The coefficients $\tilde k_{i,a}$, $\tilde k_{i,d}$ are the dimensionless adsorption and desorption coefficients.
\smallskip

\begin{remark} \rm
Sometimes, in the literature the Robin type interface conditions  $(-1)^{i} \nu_i \bn \cdot \nabla u_i  = \tilde k_{i,a} \tilde g(v) u_i- \tilde k_{i,d} v$ in \eqref{total} are  replaced by (simpler) Dirichlet type conditions. In case of ``instantaneous'' adsorption and desorption one may assume $\tilde k_{i,a} \gg \nu_i$, $\tilde k_{i,d} \gg \nu_i$ and the Robin interface conditions are approximated by $  \tilde k_{i,a}\tilde g(v) u_i= \tilde k_{i,d} v$, $i=1,2$.
\end{remark}
\smallskip

From a mathematical point of view, the problem \eqref{total} is challenging, because con\-vec\-tion-diffusion equations in the moving bulk phase $\Omega_i(t)$ are coupled with  a convection-diffusion equation on the moving interface $\Gamma(t)$. As far as we know, for this model there are no rigorous results on well-posedness known in the literature, cf. also  Remark~\ref{remlit} below.

\section{Simplified model} \label{S_simple}
As a first step in the analysis of the problem \eqref{total} we consider a simplified model. We restrict to $\tilde g(v)=v$ (Henry's law), consider  the \emph{stationary case} and make some (reasonable) assumptions on the range of the adsorption and desorption parameters $\tilde k_{i,a}$, $\tilde k_{i,d}$. In the remainder we assume that $\Omega_i$ and $\Gamma$ do not depend on $t$ (e.g.,   an equilibrium motion of a rising droplet in  a suitable frame of reference).
Since the interface is passively advected by the velocity field $\bw$, this
assumption leads to the constraint
\begin{equation}\label{w.n} \bw\cdot\bn=0   \quad \text{on}~~\Gamma.
 \end{equation}
 We also assume $\Div_\Gamma\bw=0$ so that the term $(\Div_\Gamma \bw) v$ in the surface convection-diffusion equation vanishes. Furthermore, for the spatial part of the material derivative $\dot v$  we have $\bw \cdot \nabla  v= \bw \cdot \unabla v$.  We let the normal part of $\bw$ to vanish on exterior boundary:
\begin{equation} \label{cond2}
 \bw \cdot \bn_{\Omega}=0\quad \text{on}~~\partial \Omega,
\end{equation}
where $\bn_{\Omega}$ denotes the outward pointing normal on $\partial \Omega$. 
From \eqref{cond2} it follows that there is no convective  mass flux across $\partial \Omega$. We also assume no diffusive mass flux across $\dO$, i.e.  the homogeneous Neumann boundary condition $ \bn_{\Omega} \cdot \nabla u_2 =0$ on $\partial \Omega$.
Restricting the model \eqref{total} to an  equilibrium state, we obtain the following stationary problem:
\begin{equation} \label{Total}
 \begin{aligned}
 - \nu_i\Delta u_i + \bw \cdot \nabla u_i  & = f_i \quad \text{in}~ \Omega_i, ~i=1,2, \\
 - \nu_\Gamma \Delta_\Gamma v + \bw \cdot \unabla v + K [ \nu \bn \cdot \nabla u]_\Gamma& = g  \quad \text{on}~~\Gamma, \\
  (-1)^{i} \nu_i \bn \cdot \nabla u_i & = \tilde k_{i,a}u_i- \tilde k_{i,d} v \quad \text{on}~~\Gamma, \quad i=1,2, \\
  \bn_{\Omega} \cdot \nabla u_2  & =0 \quad \text{on}~~\partial \Omega.
\end{aligned}
\end{equation}
In this model, we allow source terms $g\in L^2(\Gamma)$ and $f_i\in L^2(\Omega_i)$. Using partial integration over $\Omega_i$, $i=1,2$, and over $\Gamma$, one checks that these source terms have to satisfy the consistency condition
\begin{equation} \label{consfg}
  K\big( \int_{\Omega_1} f_1 \, d\bx + \int_{\Omega_2} f_2 \, d\bx\big) + \int_\Gamma g \, d\bs =0.
 \end{equation}
A simplified version of this model, namely with only \emph{one} bulk domain $\Omega_1$ and with $\bw=0$ (only diffusion) has recently been analyzed in \cite{ElliottRanner}.

From physics it is known that for surfactants almost always the desorption rates are (much) smaller than the adsorption rates. Therefore, it is reasonable to assume  $\tilde k_{i,d} \leq c \tilde k_{i,a}$ with a ``small'' constant $c$. To simplify the presentation, we assume $\tilde k_{i,d} \leq \tilde k_{1,a} + \tilde k_{2,a}$ for $i=1,2$.
For the  adsorption rates $\tilde k_{i,a}$ we exclude the singular-perturbed cases $\tilde k_{i,a} \downarrow 0$ and $\tilde k_{i,a} \to \infty$. Summarizing, we consider the parameter ranges
\begin{equation} \label{range}
  \tilde k_{i,a} \in [k_{\min},k_{\max}], \quad \tilde k_{i,d} \in [0,\tilde k_{1,a}+ \tilde k_{2,a}],
\end{equation}
with fixed generic constants $k_{\min} >0$, $k_{\max}$.  Note that unlike in previous studies we allow $\tilde k_{i,d}=0$ (i.e., only adsorption). Finally, due to the restriction on the adsorption parameter $\tilde k_{i,a}$ given in \eqref{range} we can use the following transformation to reduce the number of parameters of the model \eqref{Total}:
\begin{equation} \label{transfo}
\begin{split}
  & \tilde u_i := \tilde k_{i,a} u_i,  \quad \tilde v:=(\tilde k_{1,a}+ \tilde k_{2,a})v, \\
 &  \tilde \nu_i:= \tilde k_{i,a}^{-1}  \nu_i, \quad  \tilde \nu_\Gamma:=\nu_\Gamma/(\tilde k_{1,a}+ \tilde k_{2,a}) ,\\
 & \tilde  \bw_i:=  \tilde k_{i,a}^{-1} \bw \quad\text{in}~~\Omega_i, \quad \tilde \bw := \bw/(\tilde k_{1,a}+ \tilde k_{2,a}) \quad\text{on}~ \Gamma.
\end{split}
\end{equation}
Note that after this transformation $\tilde \bw$ will in general be  discontinuous across $\Gamma$.  In each subdomain and on $\Gamma$, however, $\tilde \bw$ is regular: $\tilde \bw_{|\Omega_i}=\tilde \bw_i \in H^{1,\infty}(\Omega_i)^3$, $i=1,2$, and $ \tilde \bw_{|\Gamma} \in H^1(\Gamma)^3$.
For simplicity we omit the tilde notation in the transformed variables. This then results in the following model, which we study in the remainder of the paper:
\begin{equation} \label{Totaltrans}
 \begin{aligned}
 - \nu_i\Delta u_i + \bw \cdot \nabla u_i  & = f_i \quad \text{in}~ \Omega_i, ~i=1,2, \\
 - \nu_\Gamma \Delta_\Gamma v + \bw \cdot \unabla v + K[ \nu \bn \cdot \nabla u]_\Gamma& = g  \quad \text{on}~~\Gamma, \\
  (-1)^{i} \nu_i \bn \cdot \nabla u_i & = u_i- q_i v \quad \text{on}~~\Gamma, \quad i=1,2, \\  \bn_{\Omega} \cdot \nabla u_2  & =0 \quad \text{on}~~\partial \Omega, \\ \text{with} ~~q_i & := \frac{\tilde k_{i,d}}{\tilde k_{1,a}+\tilde k_{2,a}} \in [0,1].
\end{aligned}
\end{equation}
The data $f_i$ and $g$ are assumed to satisfy the consistency condition \eqref{consfg}. Recall that $K= \frac{LU}{V}>0$, cf. \eqref{total},  is  a fixed (scaling) constant.

\section{Analysis of well-posedness} \label{S_analysis}
 In this section we derive a suitable weak formulation of the problem \eqref{Totaltrans} and prove well-posedness of this weak formulation. Concerning the smoothness of the interface, we assume that $\Gamma$ is a $C^1$ manifold. This assumption also suffices for the analysis in section~\ref{S_dual}.

We first introduce some  notations. For $u \in H^1(\Omega_1 \cup \Omega_2)$ we also write $u=(u_1,u_2)$ with $u_i= u_{|\Omega_i} \in H^1(\Omega_i)$. Furthermore:
\begin{align*}
(f,g)_\omega & := \int_{\omega}fg\,dx,~~~\|f\|_\omega^2:=(f,f)_\omega,\quad \text{where}~\omega~\text{is any of}~\{\Omega,\Omega_i,\Gamma\},\\
(\nabla u,\nabla w)_{\Omega_1 \cup \Omega_2} &:=\sum_{i=1,2}\int_{\Omega_i}\nabla u_i \cdot \nabla w_i\,dx, \quad u,w \in H^1(\Omega_1 \cup \Omega_2),\\
 \|u\|_{1, \Omega_1 \cup \Omega_2}^2 & := \|u_1\|_{H^1(\Omega_1)}^2 + \|u_2\|_{H^1(\Omega_2)}^2 = \|u\|_{\Omega}^2 + \|\nabla u\|_{\Omega_1 \cup \Omega_2}^2.
\end{align*}
We need a suitable gauge condition. In the original dimensional variables a natural condition is conservation of total mass, i.e. $(u_1,1)_{\Omega_1}+(u_2,1)_{\Omega_2}+(v,1)_\Gamma=m_0$, with $m_0 >0$ the initial total mass. Due to the transformation of variables and with an additional constant shift  this condition  is transformed to  $ K \tilde k_{1,a}^{-1}(u_1,1)_{\Omega_1}+K \tilde k_{2,a}^{-1}(u_2,1)_{\Omega_1} + (\tilde k_{1,a}+ \tilde k_{2,a})^{-1}(v,1)_\Gamma=0$ for the variables used in \eqref{Totaltrans}. Hence, we obtain the natural gauge condition
\begin{equation}\label{Gau}
 K (1+r)(u_1,1)_{\Omega_1}+ K(1+\frac{1}{r})(u_2,1)_{\Omega_2} + (v,1)_\Gamma=0, \quad r:= \frac{\tilde k_{2,a}}{\tilde k_{1,a}}.
\end{equation}
Define the product spaces
\begin{align*}
  \bV & =H^1(\Omega_1 \cup \Omega_2) \times H^1(\Gamma), \quad \|(u,v)\|_{\bV}:=\big( \|u\|_{1, \Omega_1 \cup \Omega_2}^2 +\|v\|_{1,\Gamma}^2\big)^\frac12, \\ \bVt & = \{\, (u,v) \in \bV~|~(u,v)~~\text{satisfies}~\eqref{Gau}\,\}.
\end{align*}

To obtain the weak formulation, we multiply the bulk and surface equation in \eqref{Totaltrans} by test functions from $\bV$, integrate by parts and use interface and boundary conditions. The resulting weak formulation
reads: Find $(u,v)\in \bVt$ such that for all $(\eta,\zeta)\in \bV$:
\begin{align}
 a((u,v);(\eta,\zeta)) & = (f_1,\eta_1)_{\Omega_1}+(f_2,\eta_2)_{\Omega_2} +(g,\zeta)_{\Gamma} ,\label{weak} \\
  a((u,v);(\eta,\zeta)) &:= (\nu\nabla u,\nabla \eta)_{\Omega_1 \cup \Omega_2} + (\bw \cdot \nabla u,\eta)_{\Omega_1\cup \Omega_2} + \nu_\Gamma(\unabla v,\unabla \zeta)_{\Gamma}  \nonumber \\  & + (\bw \cdot \unabla v,\zeta)_\Gamma
   +  \sum_{i=1}^2(u_i-q_iv, \eta_i- K \zeta)_\Gamma. \nonumber
\end{align}
For the further analysis, we note that both $\bw$-dependent parts of the bilinear form in \eqref{weak} are skew-symmetric:
\begin{equation}\label{w-forms}
(\bw \cdot \nabla u_i,\eta_i)_{\Omega_i}=-(\bw \cdot \nabla \eta_i,u_i)_{\Omega_i},~i=1,2,\quad
(\bw \cdot \unabla v,\zeta)_\Gamma=-(\bw \cdot \unabla \zeta,v)_\Gamma.
\end{equation}
To verify the first equality in \eqref{w-forms}, one integrates by parts over each subdomain $\Omega_i$:
\[ \begin{split}
(\bw \cdot \nabla u_1,\eta_1)_{\Omega_1} &  = -(\bw \cdot \nabla \eta_1,u_1)_{\Omega_1} -((\Div \bw)\eta_1,u_1)_{\Omega_1}+
((\bn\cdot\bw)\eta_1,u_1)_{\Gamma}, \\
(\bw \cdot \nabla u_2,\eta_2)_{\Omega_2} &= -(\bw \cdot \nabla \eta_2,u_2)_{\Omega_2} - ((\Div \bw)\eta_2,u_2)_{\Omega_2}  -
((\bn\cdot\bw)\eta_2,u_2)_{\Gamma} \\ & ~~~ +((\bn_{\Omega}\cdot\bw)\eta_2,u_2)_{\dO_2\cap\dO}.
\end{split} \]
All terms with $\Div \bw$, $\bn\cdot\bw$ or $\bn_{\Omega}\cdot\bw$ vanish   due to \eqref{div_w}, \eqref{w.n} and \eqref{cond2}.

The  variational formulation in \eqref{weak} is the basis for the finite element method introduced in section~\ref{S_FE}. For the analysis of well-posedness, it is convenient to introduce an \emph{equivalent} formulation where the test space $\bV$ is replaced by a smaller one, in which a suitable gauge condition is used. For this we define, for $\alpha=(\alpha_1,\alpha_2)$ with $\alpha_i \geq 0$, the space
\[
  \bV_\alpha:= \{\, (u,v) \in \bV~|~\alpha_1 (u_1,1)_{\Omega_1} + \alpha_2 (u_2,1)_{\Omega_2} +(v,1)_\Gamma=0 \, \}.
\]
Note that $\tilde \bV= \bV_{\alpha}$ for $\alpha=(K(1+r), K(1+\frac{1}{r}))$, cf.~\eqref{Gau}. The data $f_1,f_2,g$,  satisfy the consistency property \eqref{consfg}. From this and \eqref{w-forms} it follows that if a pair of trial and test functions $((u,v);(\eta,\zeta))$ satisfies \eqref{weak} then $((u,v);(\eta,\zeta)+\gamma (K,1))$ also satisfies \eqref{weak} for arbitrary $\gamma \in \mathbb{R}$. Now let an arbitrary $\alpha=(\alpha_1,\alpha_2)$ be given.  For every $(\eta,\zeta) \in \bV$ there exists $\gamma \in \mathbb{R}$ and $(\tilde \eta, \tilde \zeta) \in \bV_\alpha$ such that $(\eta,\zeta)=(\tilde \eta, \tilde \zeta) + \gamma (K,1)$ holds. From this it follows that \eqref{weak} is \emph{equivalent} to the following problem: Find $(u,v)\in \bVt$ such that for all $(\eta,\zeta)\in \bV_\alpha$:
\begin{equation}
 a((u,v);(\eta,\zeta))  = (f_1,\eta_1)_{\Omega_1}+(f_2,\eta_2)_{\Omega_2} +(g,\zeta)_{\Gamma} .\label{weak2}
\end{equation}
For this weak formulation we shall analyze well-posedness.

For $H^1(\Omega_i)$ and $H^1(\Gamma)$ the following Poincare-Friedrich's inequalities hold:
\begin{align}\label{Poincare}
\|u_i\|_{\Omega_i}^2 & \le c (\|\nabla u_i\|^2_{\Omega_i}+(u_i,1)^2_{\Omega_i}) \quad \text{for all}~~u_i \in H^1(\Omega_i),\\
\|u_i\|_{\Omega_i}^2 & \le c (\|\nabla u_i\|^2_{\Omega_i}+\|u_i\|^2_{\Gamma})\quad \text{for all}~~u_i \in H^1(\Omega_i), \label{Fr}\\
\|v\|_{\Gamma}^2  & \le c (\|\unabla v\|^2_{\Gamma}+(v,1)^2_{\Gamma})\quad \text{for all}~~v \in H^1(\Gamma). \label{Poincare2}
\end{align}
For the analysis of stability of the weak formulation we need the following Poincare type inequality in the space $\bV$.
\begin{lemma} \label{LempoincareA} Let $r_i,\sigma_i \in [0,\infty)$, $i=1,2$. There exists  $C_{P}(r_1,r_2,\sigma_1,\sigma_2) >0$ such that for all $(u,v) \in \bV$,  the following  inequality holds:
\begin{equation} \label{Poincar} \begin{split}
 & \|(u,v)\|_{\bV}  \le C_{P}\big(\|\nabla u\|_{\Omega_1 \cup \Omega_2} + \|\unabla v\|_{\Gamma}\\
  & + | r_1(u_1,1)_{\Omega_1}+r_2(u_2,1)_{\Omega_2}+(v,1)_{\Gamma}| +\sum_{i=1}^2|(u_i- \sigma_iv,1)_\Gamma|).
\end{split}
\end{equation}
\end{lemma}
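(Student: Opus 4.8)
The plan is to prove \eqref{Poincar} by a compactness/contradiction argument, which is the standard route for Poincaré-type inequalities where the right-hand side involves a gradient seminorm plus finitely many linear functionals that together kill the relevant null space. Concretely, suppose the inequality fails. Then there is a sequence $(u^k,v^k) \in \bV$ with $\|(u^k,v^k)\|_{\bV} = 1$ for all $k$, while the quantity in the large parentheses on the right-hand side of \eqref{Poincar} tends to $0$ as $k \to \infty$. In particular $\|\nabla u^k\|_{\Omega_1 \cup \Omega_2} \to 0$, $\|\unabla v^k\|_{\Gamma} \to 0$, and the three linear functionals $r_1(u_1^k,1)_{\Omega_1}+r_2(u_2^k,1)_{\Omega_2}+(v^k,1)_{\Gamma}$ and $(u_i^k - \sigma_i v^k,1)_\Gamma$, $i=1,2$, all tend to $0$.

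Next I would extract a convergent subsequence. Since $\bV = H^1(\Omega_1 \cup \Omega_2) \times H^1(\Gamma)$ is a Hilbert space and the sequence is bounded, a subsequence (not relabeled) converges weakly in $\bV$ to some $(u,v) \in \bV$. Because the embeddings $H^1(\Omega_i) \hookrightarrow L^2(\Omega_i)$ and $H^1(\Gamma) \hookrightarrow L^2(\Gamma)$ are compact (here the $C^1$ regularity of $\Gamma$ and the Lipschitz regularity of the $\Omega_i$ are used), the subsequence converges strongly in $L^2(\Omega_1) \times L^2(\Omega_2) \times L^2(\Gamma)$. Combined with $\|\nabla u^k\|_{\Omega_1 \cup \Omega_2} \to 0$ and $\|\unabla v^k\|_{\Gamma} \to 0$, this shows $(u^k,v^k)$ is in fact a Cauchy sequence in $\bV$, so it converges strongly in $\bV$ to $(u,v)$, and the limit satisfies $\nabla u_i = 0$ in $\Omega_i$ and $\unabla v = 0$ on $\Gamma$. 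Hence $u_i \equiv c_i$ is constant on each $\Omega_i$ (each $\Omega_i$ connected — $\Omega_1$ by assumption, and $\Omega_2 = \Omega \setminus \overline{\Omega_1}$ connected), and $v \equiv c_\Gamma$ is constant on $\Gamma$ (connected since $\Gamma$ is the boundary of the simply connected $\Omega_1$). Strong convergence also gives $\|(u,v)\|_{\bV} = 1$, so $(c_1,c_2,c_\Gamma) \neq (0,0,0)$.

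Finally I would use the vanishing of the linear functionals in the limit. Passing to the limit in the three functionals yields the linear system $r_1 |\Omega_1| c_1 + r_2 |\Omega_2| c_2 + |\Gamma| c_\Gamma = 0$ and $|\Gamma|(c_i - \sigma_i c_\Gamma) = 0$ for $i=1,2$; the latter two give $c_1 = \sigma_1 c_\Gamma$ and $c_2 = \sigma_2 c_\Gamma$, and substituting into the first gives $(r_1 |\Omega_1| \sigma_1 + r_2 |\Omega_2| \sigma_2 + |\Gamma|) c_\Gamma = 0$. Since the coefficient $r_1 |\Omega_1| \sigma_1 + r_2 |\Omega_2| \sigma_2 + |\Gamma| \geq |\Gamma| > 0$, we conclude $c_\Gamma = 0$, hence $c_1 = c_2 = 0$, contradicting $(c_1,c_2,c_\Gamma) \neq (0,0,0)$. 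This contradiction proves the inequality, with $C_P$ depending on $r_1,r_2,\sigma_1,\sigma_2$ (through the domains $\Omega_i$, $\Gamma$) but not on $(u,v)$.

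The main obstacle — or rather the point requiring care rather than genuine difficulty — is the connectedness argument for the zero-gradient limit: one must be sure that each $\Omega_i$ and $\Gamma$ is connected so that $\nabla u_i = 0$ forces $u_i$ constant and $\unabla v = 0$ forces $v$ constant, and this is where the standing geometric hypotheses ($\Omega_1$ simply connected and strictly contained in $\Omega$, so $\Gamma$ and $\Omega_2$ are connected) are essential. A secondary point is that the compactness argument delivers a constant $C_P$ that is finite but not explicit; if an explicit constant were needed one would instead argue directly via \eqref{Poincare}–\eqref{Poincare2}, bounding $(u_i,1)_{\Omega_i}$ and $(v,1)_{\Gamma}$ in terms of the given functionals by solving the same small linear system, but the compactness route is cleaner and suffices for the well-posedness analysis.
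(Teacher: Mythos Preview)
Your proof is correct and follows essentially the same approach as the paper. The paper invokes the Petree--Tartar lemma (Ern--Guermond): setting $A(u_1,u_2,v)=(\nabla u_1,\nabla u_2,\unabla v,l_0,l_1,l_2)$ and $T(u_1,u_2,v)=(u_1,u_2,v)$, it checks that $A$ is injective (via exactly your $3\times3$ linear system) and $T$ is compact (via the same Rellich embeddings you use), and then applies the lemma; your contradiction argument is precisely the standard proof of Petree--Tartar, so you have simply unpacked the black box rather than cited it.
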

\begin{proof}  The result follows from the Petree-Tartar Lemma (cf., \cite{ErnGuermond}). For convenience, we recall the lemma: Let $X,Y,Z$ be Banach spaces, $A \in L(X,Y)$ injective, $T\in L(X,Z)$ compact and assume
\begin{equation} \label{PT1}
  \|x\|_X \leq c \big( \|Ax\|_Y +\|Tx\|_Z\big) \quad \text{for all}~~x \in X.
\end{equation}
Then there exists a constant $c$ such that
\begin{equation} \label{PT2}
  \|x\|_X \leq c  \|Ax\|_Y  \quad \text{for all}~~x \in X
\end{equation}
holds.
We take $X=H^1(\Omega_1) \times H^1(\Omega_2) \times H^1(\Gamma)$ with the norm
\[
    \|(u_1,u_2,v)\|_X= (\|u_1\|_{1,\Omega_1}^2 +\|u_2\|_{1,\Omega_2}^2+ \|v\|_{1,\Gamma}^2)^{\frac12}.
\]
Furthermore, $Y=L^2(\Omega_1)^3 \times L^2(\Omega_2)^3 \times L^2(\Gamma)^3 \times \mathbb{R}^3$ with the product norm and $Z=L^2(\Omega_1) \times L^2(\Omega_2) \times L^2(\Gamma)$ with the product norm.
 We introduce the bilinear forms
\begin{align*}  l_0(u_1,u_2,v) & := r_1(u_1,1)_{\Omega_1} + r_2(u_2,1)_{\Omega_2} +(v,1)_{\Gamma}, \\ l_1(u_1,u_2,v)& =(u_1- \sigma_1 v,1)_\Gamma, \\ l_2(u_1,u_2,v) & =(u_2- \sigma_2 v,1)_\Gamma,
 \end{align*}
 and define the linear operators
\begin{align*}
 A(u_1,u_2,v) &= (\nabla u_1, \nabla u_2, \unabla v, l_0(u_1,u_2,v), l_1(u_1,u_2,v),l_2(u_1,u_2,v)),  \\
 T(u_1,u_2,v) &= (u_1,u_2,v). 
\end{align*}
Then we have $A \in L(X,Y)$. Consider $A(u_1,u_2,v)=0$. The first 9 equations yield $u_1=$constant, $u_2=$constant, $v=$constant, and substitution of this in the last three equations yields that these constants must be zero. Hence, $A$ is injective. The operator $T \in L(X,Z)$ is compact. This follows from the compactness of the embeddings $H^1(\Omega_i) \hookrightarrow L^2(\Omega)$, $H^1(\Gamma) \hookrightarrow L^2(\Gamma)$.
 It is easy to check that the inequality \eqref{PT1} is satisfied. The Petree-Tartar Lemma
  implies $(\|u\|_{1,\Omega_1 \cup \Omega_2}^2 + \|v\|_{1,\Gamma}^2)^\frac12 \leq c \|A(u_1,u_2,v)\|_Y$  and thus the estimate \eqref{Poincar} holds.\quad
\end{proof}
\medskip

The next theorem states an inf-sup stability estimate for the bilinear form in \eqref{weak2}.
\begin{theorem}\label{Infsup2}  There exists $C_{st}>0$ such that for all $q_1,q_2 \in [0,1]
$ and with a suitable $\alpha=\alpha(q_1,q_2)$ the following holds:
\begin{equation}\label{InfsuppA}
\inf_{(u,v)\in\bVt}\sup_{(\eta,\zeta)\in\bV_\alpha}\frac{a((u,v);(\eta,\zeta))}{\|(u,v)\|_{\bV}\|(\eta,\zeta)\|_{\bV}}
\ge C_{st}.
\end{equation}
\end{theorem}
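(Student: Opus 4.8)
The plan is to establish the inf--sup condition \eqref{InfsuppA} by, for each given $(u,v) \in \bVt$, explicitly constructing a test function $(\eta,\zeta) \in \bV_\alpha$ for which $a((u,v);(\eta,\zeta))$ is bounded below by $c\|(u,v)\|_{\bV}^2$ while $\|(\eta,\zeta)\|_{\bV} \le C\|(u,v)\|_{\bV}$. The natural first guess is $(\eta,\zeta) = (u, Kv)$: since the convective terms are skew-symmetric by \eqref{w-forms}, testing with $(u, Kv)$ kills the $\bw$-terms, leaves the coercive diffusion contributions $(\nu\nabla u,\nabla u)_{\Omega_1\cup\Omega_2} + K\nu_\Gamma\|\unabla v\|_\Gamma^2$, and produces the interface term $\sum_{i=1}^2 (u_i - q_i v, u_i - Kv)_\Gamma$. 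Here one must check that this last term, together with the diffusion terms, controls enough; expanding it gives $\sum_i \|u_i\|_\Gamma^2 - \sum_i(K + q_i)(u_i,v)_\Gamma + K(q_1+q_2)\|v\|_\Gamma^2$, which is a quadratic form in $(\|u_1\|_\Gamma,\|u_2\|_\Gamma,\|v\|_\Gamma)$ that is positive semidefinite but \emph{not} positive definite in general (e.g.\ when $q_1=q_2=1$ it degenerates). So the pure choice $(u,Kv)$ is not quite enough, and one must add a correction.

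The second, and decisive, step is to augment the test function with a component that recovers control of the mean values. The idea is to take $(\eta,\zeta) = (u, Kv) + \lambda(\mu_1, \mu_2, \kappa)$ for a small parameter $\lambda>0$ and a carefully chosen constant (or nearly constant) triple that, after insertion into $a(\cdot;\cdot)$, produces the missing zeroth-order terms. Concretely, the interface term in $a$ paired against constant test functions yields $\sum_i (u_i - q_i v, \eta_i - K\zeta)_\Gamma$, and by a judicious choice of the constants $\eta_i - K\zeta$ one generates a positive multiple of $|\sum_i (u_i - q_iv,1)_\Gamma|^2$-type quantities; simultaneously one must use the gauge condition \eqref{Gau} satisfied by $(u,v) \in \bVt$ to control the remaining mean value $r_1(u_1,1)_{\Omega_1} + r_2(u_2,1)_{\Omega_2} + (v,1)_\Gamma$. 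Feeding all the resulting lower bounds (diffusion seminorms, the interface quadratic form, and the two or three mean-value functionals) into the Poincar\'e-type inequality of Lemma~\ref{LempoincareA} — with $r_i$, $\sigma_i$ matching the coefficients produced by the construction — upgrades the partial control into the full $\|(u,v)\|_{\bV}^2$ bound. The role of choosing $\alpha = \alpha(q_1,q_2)$ appropriately is precisely to make the correction term $\lambda(\mu_1,\mu_2,\kappa)$ lie in $\bV_\alpha$ (or to absorb its mean-value part), which is where the dependence of $\alpha$ on $q_1,q_2$ enters. One also needs the constant $\lambda$ small enough that the cross terms introduced by the correction (in the diffusion and convection parts) are absorbed by the already-established coercive terms via Young's inequality, using that $\|\bw\|_{\infty}$ is finite and the correction is bounded.

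The main obstacle will be the bookkeeping in this last absorption-and-combination step: one must verify that the quadratic form coming from the interface coupling, though possibly degenerate, still vanishes only on the subspace where $u_1 = u_2 = q_iv$ on $\Gamma$ (so $v$ is "seen" through the bulk diffusion and the mean-value constraints), and then check that the degenerate directions are exactly those controlled by the mean-value functionals fed into Lemma~\ref{LempoincareA}. A clean way to organize this is a compactness/contradiction argument: suppose \eqref{InfsuppA} fails, take a sequence $(u^n,v^n) \in \bVt$ with $\|(u^n,v^n)\|_{\bV} = 1$ and $\sup_{(\eta,\zeta)\in\bV_\alpha} a((u^n,v^n);(\eta,\zeta)) \to 0$; test with $(u^n, Kv^n)$ to get $\|\nabla u^n\|_{\Omega_1\cup\Omega_2} \to 0$, $\|\unabla v^n\|_\Gamma \to 0$, and the interface quadratic form $\to 0$; extract a subsequence converging in $L^2$ (compact embeddings) and weakly in $\bV$ to some constant limit $(u,v)$; then the interface relations force $u_1 = u_2 = q_1v = q_2v$ on $\Gamma$ and testing with constants in $\bV_\alpha$ plus the gauge condition force all constants to vanish, contradicting $\|(u,v)\|_{\bV}=1$ after passing to the (strong, by the seminorm convergence) limit. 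This contradiction route sidesteps the need to optimize $\lambda$ explicitly, at the cost of not producing an explicit $C_{st}$; I would present whichever of the two is cleaner, most likely the explicit construction since the paper seems to favor quantitative bounds, but flag the compactness argument as the conceptual backbone.
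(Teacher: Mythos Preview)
Your proposal has a genuine gap at its starting point: the interface quadratic form obtained by testing with $(u,Kv)$ (or $(u,v)$) is \emph{not} positive semidefinite. Expanding as you did,
\[
\sum_{i}(u_i-q_iv,\,u_i-Kv)_\Gamma=\sum_i\|u_i\|_\Gamma^2-\sum_i(K+q_i)(u_i,v)_\Gamma+K(q_1+q_2)\|v\|_\Gamma^2,
\]
the associated $3\times3$ matrix has determinant $\tfrac{K}{2}(q_1+q_2)-\tfrac{K^2}{2}-\tfrac14(q_1^2+q_2^2)$, which is strictly negative for small $q_i$ (e.g.\ $q_1=q_2=0$ gives $-K^2/2$). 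So the form is indefinite, not merely degenerate; for instance with $q_i=0$ and $u_i|_\Gamma=\tfrac{K}{2}v$ it equals $-\tfrac{K^2}{2}\|v\|_\Gamma^2$. This undermines both routes you sketch. In the constructive route, the negative contribution $-c\|v\|_\Gamma^2$ is not absorbed by your constant correction $\lambda(\mu_1,\mu_2,\kappa)$: that correction produces $\lambda\sum_i|(u_i-q_iv,1)_\Gamma|^2$, which for $q_i=0$ carries no $v$-information at all. In the contradiction route, from $a((u^n,v^n);(u^n,Kv^n))\to0$ you cannot conclude $\|\nabla u^n\|\to0$ and $\|\unabla v^n\|\to0$ separately, since the indefinite interface term may cancel the positive diffusion terms.

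The paper's proof circumvents this by a case analysis on the size of $q_1,q_2$, choosing test functions of the form $(\beta_1 u_1,\beta_2 u_2,\gamma v)$ with \emph{asymmetric} scalings so that the interface term becomes genuinely nonnegative. When both $q_i\ge\delta>0$, the choice $\eta_i=(q_1/q_i)u_i$, $\zeta=K^{-1}q_1 v$ turns the coupling into the perfect squares $\|u_1-q_1v\|_\Gamma^2+(q_1/q_2)\|u_2-q_2v\|_\Gamma^2$, after which Lemma~\ref{LempoincareA} applies directly. When the $q_i$ are small, the choice $(\beta u,v)$ with large $\beta$ makes $\beta\|u_i\|_\Gamma^2$ dominate the cross term, and one then uses the Friedrichs inequality \eqref{Fr} together with the gauge condition (to bound $(v,1)_\Gamma$ by $\|u\|_\Omega$) rather than Lemma~\ref{LempoincareA}. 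A mixed case is handled by combining the two. The dependence $\alpha=\alpha(q_1,q_2)$ arises because these scaled test functions satisfy different mean-value constraints in the different regimes.
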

\begin{proof} Let $(u,v)\in\bVt$ be given. Note that $(u,v)$ satisfies the gauge condition \eqref{Gau}. We first  treat the case $0 \leq q_2 \leq q_1 \leq 1$.
We consider three cases depending on values of these parameters.

We first consider $q_1,q_2 \in [0,\eps]$, with $\eps >0$ specified below.
We take $\eta_1=\beta u_1$, $\eta_2=\beta u_2$, with $\beta >0$, and $\zeta=v$. The value of $\beta$ is chosen further on. This yields
\begin{align*}
& a((u,v);(\eta,\zeta))  = \nu_1 \beta  \|\nabla u_1\|_{\Omega_1}^2 + \nu_2 \beta \|\nabla u_2\|_{\Omega_2}^2 + \nu_\Gamma  \|\unabla v \|_\Gamma^2 +\beta \|u_1\|_\Gamma^2 + \beta\|u_2\|_\Gamma^2 \\
 & -\sum_{i=1}^2(q_i \beta +K)(u_i,v)_\Gamma +K(q_1+q_2)\|v\|_\Gamma^2 \\
& \geq \beta \|\nu \nabla u\|_{\Omega_1\cup \Omega_2}^2  + \frac12 \beta \|u_1\|_\Gamma^2 + \frac12 \beta \|u_2\|_\Gamma^2  +\nu_\Gamma  \|\unabla v \|_\Gamma^2-(\eps \beta^\frac12 + K\beta^{-\frac12})^2 \|v\|_\Gamma^2 \\
 & \geq c_F \beta \|u\|_{1, \Omega_1 \cup \Omega_2}^2 + \nu_\Gamma  \|\unabla v \|_\Gamma^2-(\eps \beta^\frac12 + K\beta^{-\frac12})^2 \|v\|_\Gamma^2,
\end{align*}
where in the last inequality we used \eqref{Fr}. The constant $c_F >0$ depends only on the Friedrich's constant from \eqref{Fr} and the viscosity $\nu$. From the gauge condition we get
\[
  (v,1)^2 \leq 2 K^2\big( (1+r)^2 (u_1,1)_{\Omega_1}^2 +(1+\frac{1}{r})^2 (u_2,1)_{\Omega_2}^2\big) \leq c \big(\|u_1\|_{\Omega_1}^2 + \|u_2\|_{\Omega_2}^2 \big)=c\|u\|^2_\Omega.
\]
Using this and the Poincare's inequality in \eqref{Poincare2} we obtain
\begin{align*}
 & a((u,v);(\eta,\zeta)) \\ &   \geq c_F \beta \|u\|_{1, \Omega_1 \cup \Omega_2}^2 +\nu_\Gamma ( \|\unabla v \|_\Gamma^2 + (v,1)^2\big)  - \hat c  \|u\|^2_\Omega
  -(\eps \beta^\frac12 + K\beta^{-\frac12})^2 \|v\|_\Gamma^2  \\ & \geq  c_F \beta \|u\|_{1, \Omega_1 \cup \Omega_2}^2 + \hat c_F \|v\|_{1,\Gamma}^2 - \hat c  \|u\|^2_\Omega- (\eps \beta^\frac12 + K\beta^{-\frac12})^2 \|v\|_\Gamma^2.
\end{align*}
The constant $\hat c$ depends only on $\nu_\Gamma$, $r,K$.
The constant $\hat c_F >0$ depends only on a Poincare's constant and $\nu_\Gamma$. We take $\beta$ sufficiently large (depending only on $c_F$, $\hat c$ and $K$) and $\eps >0$ sufficiently small such that the third term can be adsorbed in the first one and the last term can be adsorbed in the second one. Thus we get
\[
a((u,v);(\eta,\zeta))  \geq c \|(u,v)\|_\bV^2 \geq c  \|(u,v)\|_\bV  \|(\eta,\zeta)\|_\bV,
\]
which  completes the proof of \eqref{InfsuppA} for the first case. Now $\eps >0$ is fixed.

In the second case we take $q_1\geq \eps$, and $q_2 \in [0,\delta]$, with a $\delta \in (0,\eps]$ that will be specified below. We take $\eta_1=u_1, \eta_2=0, \zeta = K^{-1}q_1 v$. Using the gauge condition and \eqref{Poincar} with $u_2=0$, $\sigma_2=0$, $r_1=K(1+r)$, $\sigma_1=q_1$, we get
\begin{align}
  a((u,v);(\eta,\zeta))& = \nu_1  \|\nabla u_1\|_{\Omega_1}^2 + \frac{q_1 \nu_\Gamma}{K}  \|\unabla v \|_\Gamma^2 +\|u_1-q_1v\|_\Gamma^2 - q_1(u_2,v)_\Gamma + q_2 q_1 \|v\|_{\Gamma}^2 \nonumber \\
 & \geq \nu_1  \|\nabla u_1\|_{\Omega_1}^2 + \frac{\eps \nu_\Gamma}{K}  \|\unabla v \|_\Gamma^2 +\|u_1-q_1v\|_\Gamma^2  \nonumber \\ & +|K(1+r)(u_1,1)_{\Omega_1}+(v,1)_\Gamma|^2   - K^2(1+\frac{1}{r})^2(u_2,1)_{\Omega_2}^2 -q_1(u_2,v)_\Gamma \nonumber \\
 & \geq c_F \big( \|u_1\|_{1,\Omega_1}^2 + \|v\|_{1, \Gamma}^2\big) - c \|u_2\|_{\Omega_2}^2   -\|u_2\|_\Gamma \|v\|_\Gamma \nonumber \\
 & \geq \frac12 c_F \big( \|u_1\|_{1,\Omega_1}^2 + \|v\|_{1, \Gamma}^2\big) - c\big( \|u_2\|_{\Omega_2}^2+ \|u_2\|_\Gamma^2\big). \label{pp}
\end{align}
The constant $c_F >0$ depends on Poincare's constant and on $\eps$. We now take $\eta_1=0, \eta_2=\beta u_2$, with $\beta >0$ and  $\zeta =0$. This yields, cf. \eqref{Fr},
\[
 a((u,v);(\eta,\zeta)) = \nu_2  \beta \|\nabla u_2\|_{\Omega_1}^2 +  \beta \|u_2\|_\Gamma^2 - \beta q_2(u_2,v)_\Gamma  \geq c \beta \|u_2\|_{1,\Omega_2}^2 - \frac12  \beta \delta^2 \|v\|_{\Gamma}^2 .
\]
Combining this with \eqref{pp} and taking $\beta$ sufficiently large such that the last term in \eqref{pp} can be adsorbed, we obtain for $\eta_1=u_1, \eta_2=\beta u_2, \zeta = K^{-1}q_1 v$:
\begin{align*}
  a((u,v);(\eta,\zeta)) \geq  \frac12 c_F\big( \|u_1\|_{1,\Omega_1}^2 + \|v\|_{1, \Gamma}^2\big) + c \beta \|u_2\|_{1,\Omega_2}^2 - \frac12  \beta \delta^2 \|v\|_{\Gamma}^2 .
\end{align*}
Now we take $\delta >0$ sufficiently small such that the last term can be adsorbed by the second one. Hence,
\[
a((u,v);(\eta,\zeta))  \geq c \|(u,v)\|_\bV^2 \geq c  \|(u,v)\|_\bV  \|(\eta,\zeta)\|_\bV,
\]
which completes the proof of the inf-sup property for the second case.
Now $\delta >0$ is fixed.

We consider the last case, namely $ q_1 \geq \delta$ and $q_2 \geq \delta$.
Take $\eta_1=u_1$, $\eta_2=\frac{q_1}{q_2} u_2$, $\zeta= K^{-1} q_1 v$. We then get
\begin{align*}
a((u,v);(\eta,\zeta)) & = \nu_1 \|\nabla u_1\|_{\Omega_1}^2 + \nu_2 \frac{q_1}{q_2}\|\nabla u_2\|_{\Omega_2}^2 + \frac{\nu_\Gamma q_1}{K} \|\unabla v \|_\Gamma^2 \\
  & + \|u_1 - q_1 v\|_\Gamma^2 + \frac{q_1}{q_2}\|u_2 - q_2 v\|_\Gamma^2 \\
 & \geq c \big(\|\nabla u\|_{\Omega_1 \cup \Omega_2}^2 +\|\unabla v \|_\Gamma^2  + \sum_{i=1}^2 \|u_i - q_i v\|_\Gamma^2  \big).
\end{align*}
We use \eqref{Poincar} with $r_1=K(1+r)$, $r_2=K(1+\frac{1}{r})$, with $r$ from \eqref{Gau},  and $\sigma_i=q_i$. This yields
\[
a((u,v);(\eta,\zeta))  \geq c \|(u,v)\|_\bV^2 \geq c  \|(u,v)\|_\bV  \|(\eta,\zeta)\|_\bV,
\]
with a constant $c>0$ that depends on $\delta$, but is independent of $(u,v)$.

In all three cases, since $(u,v)$ obeys  the gauge condition \eqref{Gau}, we get $(\eta,\zeta) \in V_\alpha$, for suitable $\alpha=(\alpha_1,\alpha_2)$ with $\alpha_i >0$.\\
The case $0 \leq q_1 \leq q_2 \leq 1$ can be treated with almost exactly the same arguments.
\end{proof}
\medskip

Note that the $\alpha$ used in Theorem~\ref{Infsup2} may depend on $q_i$. In the remainder, for given problem parameters $q_i \in [0,1]$, $i=1,2$, we take $\alpha$ as in Theorem~\ref{Infsup2} and use this  $\alpha$ in  the weak formulation \eqref{weak}. For the analysis of a dual problem, we also need the stability of the adjoint bilinear form
given in the next lemma.
\begin{lemma} \label{lemm}
There exists $C_{st}>0$ such that for all $q_1,q_2 \in [0,1]
$ and with  $\alpha$ as in Theorem~\ref{Infsup2} the following holds:
\begin{equation}\label{Infsupp}
\inf_{(\eta,\zeta)\in\bV_\alpha}\sup_{(u,v)\in\bVt}\frac{a((u,v);(\eta,\zeta))}{\|(u,v)\|_{\bV}\|(\eta,\zeta)\|_{\bV}}
\ge C_{st}.
\end{equation}
 \end{lemma}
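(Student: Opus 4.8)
The plan is to obtain \eqref{Infsupp} as a \emph{dual} counterpart of the primal estimate \eqref{InfsuppA}, rather than by a fresh construction of test functions. Both $\bVt$ and $\bV_\alpha$ are closed, codimension-one (hence reflexive) subspaces of the Hilbert space $\bV$, and $a(\cdot\,;\cdot)$ is bounded on $\bV\times\bV$. Let $\mathcal A\colon\bVt\to\bV_\alpha'$ be defined by $\langle\mathcal A(u,v),(\eta,\zeta)\rangle=a((u,v);(\eta,\zeta))$. By the Banach--Ne\v{c}as--Babu\v{s}ka theorem (\cite{ErnGuermond}), the inf-sup bound of Theorem~\ref{Infsup2} together with the non-degeneracy property
\begin{equation}\label{nondeg}
 \big(\,a((u,v);(\eta,\zeta))=0\ \text{ for all }(u,v)\in\bVt\,\big)\ \Longrightarrow\ (\eta,\zeta)=0
\end{equation}
imply that $\mathcal A$ is an isomorphism with $\|\mathcal A^{-1}\|\le C_{st}^{-1}$. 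Its Hilbert-space adjoint $\mathcal A^\ast\colon\bV_\alpha\to\bVt'$ is then an isomorphism with the same bound on its inverse, and since $\|\mathcal A^\ast(\eta,\zeta)\|_{\bVt'}=\sup_{(u,v)\in\bVt}a((u,v);(\eta,\zeta))/\|(u,v)\|_\bV$, this is precisely \eqref{Infsupp}. If the well-posedness of \eqref{weak2} has already been recorded (equivalently, that $\mathcal A$ is an isomorphism), this last step is all that is needed; otherwise the only remaining task is to establish \eqref{nondeg}.

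To verify \eqref{nondeg}, suppose $(\eta,\zeta)\in\bV_\alpha$ annihilates $a((u,v);\cdot)$ for every $(u,v)\in\bVt$. First I would test with interior bubbles supported in a single $\Omega_i$ (shifted by a constant so as to lie in $\bVt$), which forces $-\nu_i\Delta\eta_i-\bw\cdot\nabla\eta_i$ to be constant in $\Omega_i$; then testing with functions that do not vanish on $\Gamma$, respectively on $\partial\Omega$, recovers the adjoint interface and boundary conditions, and testing with $(0,v)$ yields the adjoint surface equation for $\zeta$. Thus $(\eta,\zeta)$ solves the homogeneous adjoint system, whose structure is obtained from \eqref{Totaltrans} by reversing the sign of the convective terms and interchanging the roles of $K$ and $q_i$ in the coupling; crucially $-\bw$ still satisfies \eqref{div_w}, \eqref{w.n} and \eqref{cond2}. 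I would then run the argument of the proof of Theorem~\ref{Infsup2} for this adjoint form: by the skew-symmetry relations \eqref{w-forms} it differs from $a$ only through the sign of the convective terms, so a case split on $q_1,q_2$ analogous to the one used there, together with the Poincar\'e-type inequality of Lemma~\ref{LempoincareA}, yields an energy estimate forcing $\unabla\zeta=0$ and $\nabla\eta_i=0$. Hence $\eta_1,\eta_2,\zeta$ are constants, the coupling terms force these constants to coincide, and the gauge condition defining $\bV_\alpha$ forces the common constant to vanish.

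I expect the main obstacle to be the asymmetry of the coupling term $\sum_i(u_i-q_iv,\eta_i-K\zeta)_\Gamma$: it is neither symmetric nor skew-symmetric, so one cannot simply test the adjoint form with $(\eta,\zeta)$ itself, and because $K$ is a fixed positive constant while $q_i\in[0,1]$, the roles of these two parameters are exchanged in the dual problem, so the balancing of the weights $\beta,\eps,\delta$ in the three cases of the Theorem~\ref{Infsup2} argument has to be redone with care (in particular the small-$q_i$ case needs a different choice of test functions than the literal analogue). A secondary nuisance is keeping track of the constant shifts needed to stay inside the two mismatched gauge subspaces $\bVt$ and $\bV_\alpha$; the duality route above is attractive precisely because it reduces everything to the already-proven bound \eqref{InfsuppA} plus the purely qualitative statement \eqref{nondeg}, for which such bookkeeping can be done very crudely.
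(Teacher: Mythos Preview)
Your approach via the Banach--Ne\v{c}as--Babu\v{s}ka theorem is valid in principle but considerably more roundabout than the paper's. The paper exploits a structural feature of the proof of Theorem~\ref{Infsup2} that you seem to have overlooked: in every one of the three cases there, the test function is obtained from the trial function by a \emph{diagonal scaling} $(\eta_1,\eta_2,\zeta)=(c_1u_1,c_2u_2,c_3v)$ with strictly positive constants $c_i$ depending only on the parameters, and $\alpha$ was chosen precisely so that this scaling maps $\bVt$ onto $\bV_\alpha$. Consequently, given any $(\eta,\zeta)\in\bV_\alpha$, one simply sets $(u,v)=(c_1^{-1}\eta_1,c_2^{-1}\eta_2,c_3^{-1}\zeta)\in\bVt$; the value of $a((u,v);(\eta,\zeta))$ and the norm comparison are \emph{literally} the same as in Theorem~\ref{Infsup2}, yielding \eqref{Infsupp} with the same constant and no additional work.

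Your route, by contrast, needs the non-degeneracy condition \eqref{nondeg}, and your proposed verification amounts to re-running the entire case analysis of Theorem~\ref{Infsup2} for the adjoint form, with the roles of $K$ and the $q_i$ interchanged in the coupling term. You correctly flag this as the main obstacle and note that the balancing of $\beta,\eps,\delta$ must be redone. But if one is prepared to carry out that rework---even ``crudely''---it would already deliver \eqref{Infsupp} directly, rendering the BNB detour superfluous. The paper's point is precisely that no such rework is needed: the primal test-function construction is invertible on the nose, so the same computation read backwards proves the lemma.
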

\begin{proof}
 Take  $(\eta,\zeta) \in \bV_\alpha$, $(\eta,\zeta) \neq (0,0)$.
 The arguments of the proof of Theorem~\ref{Infsup2}  show that for $(\eta,\zeta) \in \bV_\alpha $
  there exists $(u,v) \in \bVt$ such that $a((u,v);(\eta,\zeta)) \ge C_{st}\|(u,v)\|_{\bV}\|(\eta,\zeta)\|_{\bV}$ holds, with the same constant as \eqref{InfsuppA}.
\end{proof}
\medskip

Finally, we give a result on continuity of the bilinear form.
\begin{lemma} \label{lemmA}
 There exists a constant $c$ such that for all $q_1,q_2 \in [0,1]$ the following holds:
\[
 a((u,v);(\eta,\zeta)) \leq c \|(u,v)\|_\bV \|(\eta,\zeta)\|_\bV \quad \text{for all}~~(u,v), \, (\eta,\zeta) \in \bV.
\]
\end{lemma}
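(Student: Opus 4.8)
The plan is to bound each of the five terms in the definition of $a((u,v);(\eta,\zeta))$ separately by the product $\|(u,v)\|_\bV \|(\eta,\zeta)\|_\bV$ (up to a constant), and then sum. The terms split naturally into three groups: the two diffusion terms, the two convection terms, and the coupling/adsorption term.

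First I would handle the diffusion terms. By Cauchy--Schwarz, $(\nu\nabla u,\nabla \eta)_{\Omega_1\cup\Omega_2} \le \max(\nu_1,\nu_2)\|\nabla u\|_{\Omega_1\cup\Omega_2}\|\nabla \eta\|_{\Omega_1\cup\Omega_2}$, and similarly $\nu_\Gamma(\unabla v,\unabla\zeta)_\Gamma \le \nu_\Gamma\|\unabla v\|_\Gamma\|\unabla\zeta\|_\Gamma$. Both are clearly bounded by $c\,\|(u,v)\|_\bV\|(\eta,\zeta)\|_\bV$ since all the gradient norms are controlled by the respective $H^1$-norms. For the convection terms, I would use $\|\bw\|_{L^\infty(\Omega)} < \infty$ (from the assumed regularity $\bw \in [H^{1,\infty}(\Omega)\cap H^1(\Gamma)]^3$) to get $(\bw\cdot\nabla u,\eta)_{\Omega_1\cup\Omega_2} \le \|\bw\|_{L^\infty(\Omega)}\|\nabla u\|_{\Omega_1\cup\Omega_2}\|\eta\|_\Omega$, again bounded by $c\,\|(u,v)\|_\bV\|(\eta,\zeta)\|_\bV$. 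The surface convection term $(\bw\cdot\unabla v,\zeta)_\Gamma$ requires a bound on $\bw$ on $\Gamma$; here one uses that $\bw_{|\Gamma} \in H^1(\Gamma)^3$ together with a trace/embedding argument, or more simply $\|\bw\cdot\unabla v\|_{L^1(\Gamma)}$-type estimates. Since $H^1(\Gamma)$ embeds in $L^p(\Gamma)$ for the relevant $p$ on a two-dimensional manifold, one gets $(\bw\cdot\unabla v,\zeta)_\Gamma \le c\|\bw\|_{H^1(\Gamma)}\|\unabla v\|_\Gamma\|\zeta\|_{H^1(\Gamma)}$.

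The coupling term $\sum_{i=1}^2 (u_i - q_i v, \eta_i - K\zeta)_\Gamma$ is where the trace theorem enters: one needs $\|u_i\|_\Gamma \le c\|u_i\|_{H^1(\Omega_i)}$ and $\|v\|_\Gamma \le \|v\|_{H^1(\Gamma)}$. Using the trace inequality on each $\Omega_i$ and $q_i \in [0,1]$, $\|u_i - q_iv\|_\Gamma \le \|u_i\|_\Gamma + \|v\|_\Gamma \le c(\|u_i\|_{H^1(\Omega_i)} + \|v\|_{H^1(\Gamma)})$, and similarly for $\eta_i - K\zeta$; Cauchy--Schwarz then closes the estimate, with the constant depending on $K$ and the trace constants but independent of $q_1,q_2$ since those only appear multiplied by quantities bounded by $1$.

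The main (only) obstacle is really just bookkeeping of the constants — making sure the bound is uniform in $q_1,q_2 \in [0,1]$, which is automatic, and correctly invoking the trace theorem $H^1(\Omega_i) \hookrightarrow L^2(\Gamma)$ and the $L^\infty$/$H^1$ regularity of $\bw$ so that the surface convection term is genuinely controlled. Once those standard ingredients are in place, continuity follows by summing the five estimates. No serious difficulty is expected.
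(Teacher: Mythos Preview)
Your approach is correct and coincides with the paper's: the paper's proof is a single sentence stating that continuity follows from Cauchy--Schwarz inequalities and boundedness of the trace operator, which is precisely what you have spelled out term by term. One small simplification: for the surface convection term you do not need Sobolev embeddings on $\Gamma$, since the transformed velocity on $\Gamma$ is a constant multiple of the original $\bw\in H^{1,\infty}(\Omega)^3$ and hence lies in $L^\infty(\Gamma)$, giving $|(\bw\cdot\unabla v,\zeta)_\Gamma|\le \|\bw\|_{L^\infty(\Gamma)}\|\unabla v\|_\Gamma\|\zeta\|_\Gamma$ directly.
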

\begin{proof}
 The continuity estimate is a direct consequence of Cauchy-Schwarz inequalities and boundedness of the trace operator.\quad
\end{proof}
\medskip

We obtain the following well-posedness and regularity  results.
\begin{theorem}\label{Th_wp} For any $f_i\in L^2(\Omega_i)$, $i=1,2$, $g\in L^2(\Gamma)$ such that \eqref{consfg} holds, there exists a unique
solution $(u,v) \in \bVt$ of \eqref{weak}, which is also the unique solution to \eqref{weak2}. This solution satisfies the a-priori estimate
\begin{equation}\label{apr_est1}
\|(u,v)\|_{\bV}\le C \|(f_1,f_2,g)\|_{\bV'}\le c (\|f_1\|_{\Omega_1} +\|f_2\|_{\Omega_2} +\|g\|_{\Gamma}),
\end{equation}
with  constants $C,c$ independent of $f_i$, $g$ and $q_1,q_2 \in [0,1]$. If in addition $\Gamma$ is a  $C^2$-manifold and $\Omega$ is convex or $\partial \Omega$ is $C^2$ smooth, then $u_i\in H^2(\Omega_i)$, for $i=1,2$, and
$v\in H^2(\Gamma)$. Furthermore, the solution satisfies the second a-priori  estimate
\begin{equation}\label{apr_est2}
\|u_1\|_{H^2(\Omega_1)}+\|u_2\|_{H^2(\Omega_2)} + \|v\|_{H^2(\Gamma)} \le c (\|f_1\|_{\Omega_1} +\|f_2\|_{\Omega_2}+\|g\|_{\Gamma}).
\end{equation}
\end{theorem}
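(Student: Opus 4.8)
The plan is to combine the inf-sup stability of the primal form (Theorem~\ref{Infsup2}), the inf-sup stability of the adjoint form (Lemma~\ref{lemm}), and the continuity estimate (Lemma~\ref{lemmA}) with the generalized Lax--Milgram (Babu\v{s}ka--Ne\v{c}as) theorem applied on the closed subspaces $\bVt$ and $\bV_\alpha$ of the Hilbert space $\bV$. First I would check that the right-hand side $(\eta,\zeta) \mapsto (f_1,\eta_1)_{\Omega_1}+(f_2,\eta_2)_{\Omega_2}+(g,\zeta)_\Gamma$ defines a bounded linear functional on $\bV_\alpha$, with dual norm bounded by $c(\|f_1\|_{\Omega_1}+\|f_2\|_{\Omega_2}+\|g\|_\Gamma)$; this is immediate from Cauchy--Schwarz and the continuity of the trace. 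The Babu\v{s}ka--Ne\v{c}as theorem then yields existence and uniqueness of $(u,v)\in\bVt$ solving \eqref{weak2}, together with $\|(u,v)\|_\bV \le C_{st}^{-1}\|(f_1,f_2,g)\|_{\bV_\alpha'}$, which is the first estimate in \eqref{apr_est1}. The equivalence of \eqref{weak} and \eqref{weak2} was already established in the text preceding \eqref{weak2} (the shift by $\gamma(K,1)$ argument together with the consistency condition \eqref{consfg}), so the solution of \eqref{weak2} is also the unique solution of \eqref{weak}. The fact that $C,c$ are independent of $q_1,q_2$ follows because $C_{st}$ in Theorem~\ref{Infsup2} and Lemma~\ref{lemm} is uniform in $q_1,q_2\in[0,1]$.

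For the regularity statement, the strategy is to interpret the weak solution as a solution of the strong system \eqref{Totaltrans} in the distributional sense and then bootstrap using standard elliptic regularity on each piece. I would first test \eqref{weak} with $\zeta=0$ and $\eta_i$ compactly supported in $\Omega_i$ to recover $-\nu_i\Delta u_i + \bw\cdot\nabla u_i = f_i$ in $\Omega_i$; since $f_i\in L^2(\Omega_i)$ and $\bw\cdot\nabla u_i\in L^2(\Omega_i)$, interior $H^2$ regularity holds. Near $\Gamma$ and near $\partial\Omega$ one reads off the natural boundary conditions from the boundary terms in the integration by parts: on $\partial\Omega$ the homogeneous Neumann condition $\bn_\Omega\cdot\nabla u_2=0$, and on $\Gamma$ the Robin-type conditions $(-1)^i\nu_i\bn\cdot\nabla u_i = u_i - q_i v$. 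Treating each $u_i$ as the solution of an elliptic problem on $\Omega_i$ with mixed Neumann/Robin data, the $C^2$-smoothness of $\Gamma$ (hence of $\partial\Omega_1$ and of the interior part of $\partial\Omega_2$) and the convexity or $C^2$-smoothness of $\partial\Omega$ give $u_i\in H^2(\Omega_i)$ with the corresponding norm bound, provided the boundary data lie in $H^{1/2}$ of the respective boundary piece. The right-hand side of the Robin condition is $u_i|_\Gamma - q_i v$; since $u_i|_\Gamma\in H^{1/2}(\Gamma)$ by trace and $v\in H^1(\Gamma)$, this datum is in $H^{1/2}(\Gamma)$, so the elliptic estimate yields $\|u_i\|_{H^2(\Omega_i)}\le c(\|f_i\|_{\Omega_i} + \|u_i\|_{H^1(\Omega_i)} + \|v\|_{H^1(\Gamma)})$, and then \eqref{apr_est1} absorbs the lower-order terms. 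For $v$, testing with $\eta=0$ and using that $[\nu\bn\cdot\nabla u]_\Gamma$ is now a well-defined $L^2(\Gamma)$ (in fact $H^{1/2}$) function by the just-obtained $H^2$ bulk regularity and trace theorems for $H^2$ functions, the surface equation $-\nu_\Gamma\Delta_\Gamma v + \bw\cdot\unabla v = g - K[\nu\bn\cdot\nabla u]_\Gamma \in L^2(\Gamma)$ is an elliptic equation on the closed $C^2$-manifold $\Gamma$; standard Laplace--Beltrami regularity gives $v\in H^2(\Gamma)$ with $\|v\|_{H^2(\Gamma)}\le c(\|g\|_\Gamma + \|[\nu\bn\cdot\nabla u]_\Gamma\|_\Gamma + \|v\|_{H^1(\Gamma)})$. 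Chaining these estimates and invoking \eqref{apr_est1} once more gives \eqref{apr_est2}.

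The main obstacle I anticipate is the regularity part rather than the well-posedness part: one must be careful about the order in which the bootstrap is carried out, since the bulk $H^2$ estimate requires control of the Robin datum (which involves the trace of $u_i$, fine) but the surface $H^2$ estimate requires the normal flux jump $[\nu\bn\cdot\nabla u]_\Gamma$, which only becomes an admissible $L^2(\Gamma)$ datum \emph{after} the bulk $H^2$ regularity has been established. Thus the correct sequence is: interior bulk regularity, then bulk regularity up to $\Gamma$ and $\partial\Omega$ using the already-available $H^{1/2}$ traces as Robin data, then finally surface regularity using the resulting $H^{1/2}$ normal traces. A secondary technical point is that the bulk problems on $\Omega_i$ with pure Neumann/Robin boundary conditions are only solvable up to constants / compatibility, but since we already \emph{have} a solution $(u,v)$ from the well-posedness step, we only need the a priori regularity estimate, which holds without compatibility subtleties once the datum is identified. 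One should also note that the constants in the elliptic regularity estimates depend on $\Gamma$, $\partial\Omega$, $\nu_i$, $\nu_\Gamma$ and on the fixed parameters, but not on $f_i,g$, which is all that \eqref{apr_est2} claims.
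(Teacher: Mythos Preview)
Your proposal is correct and follows essentially the same route as the paper: well-posedness via the Babu\v{s}ka--Ne\v{c}as theorem using Theorem~\ref{Infsup2}, Lemma~\ref{lemm}, and Lemma~\ref{lemmA}, followed by the bootstrap you describe (bulk $H^2$ regularity from the Robin problem with $H^{1/2}$ data, then surface $H^2$ regularity from the Laplace--Beltrami problem once $[\nu\bn\cdot\nabla u]_\Gamma \in L^2(\Gamma)$ is available). The paper is terser---it cites Grisvard's Theorem~2.4.2.6 for the bulk Robin problem and Dziuk's Lemma~3.2 for the surface regularity rather than spelling out interior-then-boundary---but your identification of the correct bootstrap order and of the Robin datum $u_i|_\Gamma - q_i v \in H^{1/2}(\Gamma)$ matches exactly.
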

\begin{proof}
Existence, uniqueness and the first a-priori estimate follow from Theorem~\ref{Infsup2} and the Lemmas~\ref{lemm} and \ref{lemmA}.
To show the extra regularity of the solution, we note that $u_i$ satisfies the weak formulation of the Poisson equation $-\nu_i\Delta u_i=F_i:=f_i-\bw\cdot\nabla u_i$ in $\Omega_i$   with a Robin boundary condition
$(-1)^i\bn\cdot \nabla u_i-u_i=G_i:=-q_i v$. Thanks to \eqref{apr_est1} we have $F_i\in L^2(\Omega_i)$, $G_i\in H^{\frac12}(\dO_i)$ and $\|F_i\|_{\Omega_i}+\|G_i\|_{H^{\frac12}(\dO_i)}\le
c (\|f_1\|_{\Omega_1} +\|f_2\|_{\Omega_2}+\|g\|_{\Gamma})$. Theorem~2.4.2.6 from~\cite{Grisvard} implies  $u_i\in H^2(\Omega_i)$ and
the estimate for $u_i$ in \eqref{apr_est2}. On the interface $\Gamma$, $v$ satisfies the weak formulation of the Laplace-Beltrami equation
$-\nu_\Gamma\Delta_\Gamma v= G_\Gamma:=g-\bw\cdot\nabla_{\Gamma} v-K[\bn\cdot \nabla u]$. Thanks to \eqref{apr_est1}, the regularity result for $u_i$ in \eqref{apr_est2} and the smoothness of $\Gamma$, we have $G_\Gamma\in L^2(\Gamma)$ and $\|G_\Gamma\|_\Gamma\le c (\|f_1\|_{\Omega_1} +\|f_2\|_{\Omega_2}+\|g\|_{\Gamma})$.
Now we apply the regularity result for the Laplace-Beltrami equation on a closed $C^2$-surface from Lemma~3.2 in~\cite{DziukActaNumerica}.
This proves $v\in H^2(\Gamma)$ and the estimate on $\|v\|_{H^2(\Gamma)}$ in \eqref{apr_est2}.
\end{proof}
\ \\
\begin{remark} \label{remlit}
 \rm In \cite{BurmanHansbo,ElliottRanner} a stationary diffusion problem on a bulk domain is linearly coupled with a stationary diffusion equation on the boundary of this domain. Hence there is only \emph{one} bulk domain. Well-posedness of a suitable weak formulation of this problem is shown in \cite{ElliottRanner,Alphonso2014}. The analysis in \cite{ElliottRanner} is significantly simpler than the one presented above. This is due to the fact that for the case of one bulk domain the coupling term is simpler and one easily verifies that the corresponding bilinear form is elliptic. In our case, due to the coupling term  $\sum_{i=1}^2(u_i-q_iv, \eta_i- K \zeta)_\Gamma$ in \eqref{weak}, the bilinear form is not elliptic and we have to derive an inf-sup estimate. A further complication, compared to the case of one bulk domain,   is the Poincare type inequality that we need, cf. Lemma~\ref{LempoincareA}.
\end{remark}

\section{Adjoint problem}\label{S_dual}
Consider the following formal adjoint problem, with   $\alpha$ as in Theorem~\ref{Infsup2}. For given $f\in L^2(\Omega)$ and $g\in L^2(\Gamma)$
find $(u,v)\in \bV_\alpha$ such that for all $(\eta,\zeta)\in \bVt$:
\begin{equation}
 a((\eta,\zeta);(u,v))  = (f_1,\eta_1)_{\Omega_1}+(f_2,\eta_2)_{\Omega_2} +(g,\zeta)_{\Gamma}.\label{weak_adj}
\end{equation}
Due to the results in Theorem~\ref{Infsup2} and Lemmas~\ref{lemm}, \ref{lemmA}, the problem \eqref{weak_adj} is well-posed.

Now we look for the corresponding strong formulation of this adjoint problem. We introduce an
appropriate gauge condition for the right-hand side:
\begin{equation} \label{rhs_gaude_Adjoint}
q_1(f_1,1)_{\Omega_1}+q_2(f_2,1)_{\Omega_2}+(g,1)_\Gamma=0.
\end{equation}
For any $(\eta_1,\eta_2,\zeta)\in \bV$ there is a $\gamma \in \mathbb{R}$ such that $(\eta_1,\eta_2,\zeta) +\gamma (q_1,q_2,1)\in \bVt$ holds. From the definition of the bilinear form it follows that   $a((q_1,q_2,1);(u,v))=0$ holds. Hence, if the right-hand side satisfies condition \eqref{rhs_gaude_Adjoint}, the formulation \eqref{weak_adj} is equivalent to:  Find $(u,v)\in \bV_\alpha$ such that
\begin{equation*}
 a((\eta,\zeta);(u,v))  = (f_1,\eta_1)_{\Omega_1}+(f_2,\eta_2)_{\Omega_2} +(g,\zeta)_{\Gamma}
 \quad \text{for all}~~(\eta,\zeta)\in \bV.
\end{equation*}
Varying $(\eta,\zeta)$ we find the strong formulation of the dual problem to \eqref{Totaltrans}:
\begin{equation} \label{Adjoint}
 \begin{aligned}
 - \nu_i\Delta u_i - \bw \cdot \nabla u_i  & = f_i \quad \text{in}~ \Omega_i, ~i=1,2, \\
 - \nu_\Gamma \Delta_\Gamma v - \bw \cdot \unabla v + [ q\nu \bn \cdot \nabla u]_\Gamma& = g  \quad \text{on}~~\Gamma, \\
  (-1)^{i} \nu_i \bn \cdot \nabla u_i & = u_i- K v \quad \text{on}~~\Gamma, \quad i=1,2, \\  \bn_{\Omega} \cdot \nabla u_2  & =0 \quad \text{on}~~\partial \Omega.
\end{aligned}
\end{equation}
In the second equation $q$ denotes a piecewise constant function with values $q_{|\Omega_i}:=q_i$.
Note that compared to the original primal problem \eqref{Totaltrans} we now have $-\bw$ instead of $\bw$ and that the roles of $K$ and $q$ are interchanged.
With the same arguments as for the primal problem, cf. Theorem~\ref{Th_wp}, the following $H^2$-regularity result for the dual problem can be derived.
\begin{theorem}\label{Th_wpd} For any $f_i\in L^2(\Omega_i)$, $i=1,2$, $g\in L^2(\Gamma)$ such that \eqref{rhs_gaude_Adjoint} holds, there exists a unique weak
solution $(u,v) \in \bV_\alpha$ of \eqref{Adjoint}. If $\Gamma$ is a  $C^2$-manifold and $\Omega$ is convex or $\partial \Omega$ is $C^2$ smooth, then $u_i\in H^2(\Omega_i)$, for $i=1,2$, and
$v\in H^2(\Gamma)$ satisfy the  a-priori  estimate
\[
\|u_1\|_{H^2(\Omega_1)}+\|u_2\|_{H^2(\Omega_2)} + \|v\|_{H^2(\Gamma)} \le c (\|f_1\|_{\Omega_1} +\|f_2\|_{\Omega_2}+\|g\|_{\Gamma}).
\]
\end{theorem}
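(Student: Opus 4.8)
The plan is to mirror the proof of Theorem~\ref{Th_wp}, exploiting that the adjoint problem \eqref{Adjoint} has exactly the same structure as the primal problem \eqref{Totaltrans}, with $\bw$ replaced by $-\bw$ and the constants $K$ and $q$ interchanged; neither modification affects the arguments. First, existence and uniqueness of a weak solution $(u,v)\in\bV_\alpha$ of \eqref{Adjoint}, together with the a-priori bound $\|(u,v)\|_{\bV}\le c(\|f_1\|_{\Omega_1}+\|f_2\|_{\Omega_2}+\|g\|_{\Gamma})$, follow from the well-posedness of \eqref{weak_adj} --- already noted as a consequence of Theorem~\ref{Infsup2} and Lemmas~\ref{lemm}, \ref{lemmA} --- combined with the equivalence between \eqref{weak_adj} and \eqref{Adjoint} established above under the gauge condition \eqref{rhs_gaude_Adjoint}.

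For the bulk regularity I would rewrite the first equation of \eqref{Adjoint} as the Poisson problem $-\nu_i\Delta u_i=F_i$ in $\Omega_i$ with $F_i:=f_i+\bw\cdot\nabla u_i$, supplemented by the Robin interface condition $(-1)^i\nu_i\bn\cdot\nabla u_i-u_i=-Kv$ on $\partial\Omega_i\cap\Gamma$ (a ``good-sign'' Robin condition, as one checks by writing it in terms of the outward normal of $\Omega_i$) and, for $i=2$, the homogeneous Neumann condition on $\partial\Omega_2\cap\dO$. By the first a-priori estimate we have $F_i\in L^2(\Omega_i)$ and the boundary datum $-Kv$ lies in $H^{\frac12}(\partial\Omega_i)$, with norms bounded by $c(\|f_1\|_{\Omega_1}+\|f_2\|_{\Omega_2}+\|g\|_{\Gamma})$; Theorem~2.4.2.6 of \cite{Grisvard} (applicable since $\Omega$ is convex or $\dO$ is $C^2$, and $\Gamma$ is $C^2$) then yields $u_i\in H^2(\Omega_i)$ together with the corresponding estimate.

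For the surface regularity, $v$ solves on the closed $C^2$-surface $\Gamma$ the Laplace--Beltrami equation $-\nu_\Gamma\Delta_\Gamma v=G_\Gamma$ with $G_\Gamma:=g+\bw\cdot\unabla v-[q\nu\,\bn\cdot\nabla u]_\Gamma$. Using the bulk $H^2$-regularity just obtained, the trace theorem gives $\bn\cdot\nabla u_i|_\Gamma\in H^{\frac12}(\Gamma)\hookrightarrow L^2(\Gamma)$; since also $\bw\cdot\unabla v\in L^2(\Gamma)$ and $g\in L^2(\Gamma)$, we get $G_\Gamma\in L^2(\Gamma)$ with $\|G_\Gamma\|_\Gamma\le c(\|f_1\|_{\Omega_1}+\|f_2\|_{\Omega_2}+\|g\|_{\Gamma})$. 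Applying Lemma~3.2 of \cite{DziukActaNumerica} for the Laplace--Beltrami operator on a closed $C^2$-surface then gives $v\in H^2(\Gamma)$ and its estimate; adding the three estimates yields the claimed a-priori bound.

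Since the argument is essentially a line-by-line transcription of the proof of Theorem~\ref{Th_wp}, there is no genuinely new obstacle; the only points requiring care are bookkeeping --- keeping track of the sign flip $\bw\mapsto-\bw$ in $F_i$ and in $G_\Gamma$, the interchange $K\leftrightarrow q$ (so that now the robust coefficient $1$ multiplies $u_i$ in the Robin condition while the possibly vanishing $q_i$ enters only through $G_\Gamma$, which is harmless), and replacing the gauge condition \eqref{consfg} and solution space $\bVt$ by \eqref{rhs_gaude_Adjoint} and $\bV_\alpha$ --- and confirming that the hypothesis $\Gamma\in C^2$ is precisely what makes the normal trace $\bn\cdot\nabla u_i|_\Gamma$ well defined in $L^2(\Gamma)$, which is the link that feeds the surface regularity estimate.
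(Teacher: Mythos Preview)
Your proposal is correct and follows exactly the approach indicated in the paper, which does not give an explicit proof but simply states ``With the same arguments as for the primal problem, cf.\ Theorem~\ref{Th_wp}.'' You have carried out that transcription carefully, correctly tracking the sign flip $\bw\mapsto-\bw$ and the interchange $K\leftrightarrow q$ in the Robin data and the surface source $G_\Gamma$.
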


\section{Unfitted finite element method}\label{S_FE}
Let the domain $\Omega\subset\mathbb{R}^3$ be  polyhedral
and  $\{\T_h\}_{h>0}$   a family of tetrahedral triangulations of $\Omega$ such that $\max\limits_{T\in\T_h}\mbox{diam}(T) \le h$.
These triangulations are assumed to be regular, consistent and stable.

It is computationally convenient to allow triangulations that are \emph{not fitted}  to the interface $\Gamma$. We use
a `discrete' interface $\Gamma_h$, which approximates $\Gamma$ (as specified below).
To this end, assume that the surface $\Gamma$ is implicitly defined as the zero set of a non-degenerate  level set function $\phi$:
\[
\Gamma=\{\bx\in\Omega \,:\,\phi(\bx)=0\},\]
where $\phi$ is $C^1$ smooth function in a neighborhood of $\Gamma$, such that
\begin{equation} \label{propphi}
\phi<0~~\text{in}~\Omega_1,\quad \phi>0~~\text{in}~\Omega_2,\quad\text{and}\quad
|\nabla \phi|\ge c_0>0~\text{in}~U_\delta\subset\Omega.
\end{equation}
Here $U_\delta\subset\Omega$ is a tubular neighborhood of $\Gamma$ of width $\delta$: $U_\delta=\{\bx \in \mathbb{R}^3: {\rm dist}(\bx,\Gamma) < \delta\}$, with $\delta >0$ a  sufficiently small constant.  A special choice for  $\phi$ is the signed distance function to $\Gamma$.
Let  $\phi_h$ be a given continuous piecewise polynomial approximation (w.r.t. $\T_h$) of the level set function $\phi$ which satisfies
\begin{equation}\label{phi_h}
\|\phi-\phi_h\|_{L^\infty(U_\delta)}+ h \|\nabla(\phi-\phi_h)\|_{L^\infty(U_\delta)}\le c\,h^{q+1},
\end{equation}
with some $q\ge1$. For this estimate to hold, we assume that the level set function $\phi$ has the smoothness property $\phi \in C^{q+1}(U_\delta)$. Clearly this induces a similar smoothness property for its zero level $\Gamma$. Then we define
\begin{equation}
\Gamma_h:=\{\, \bx \in  \Omega~:~\phi_h(\bx)=0\,\},\label{Omega_h}
\end{equation}
and assume that $h$ is sufficiently small such that $\Gamma_h \subset U_\delta$ holds. Furthermore
\begin{equation}
\begin{aligned}
\Omega_{1,h}&:= \{\, \bx \in  \Omega~~:~\phi_h(\bx) < 0\, \},\\
 \Omega_{2,h}&:= \{\, \bx \in  \Omega~~:~\phi_h(\bx) > 0\, \}.
 \end{aligned}
\end{equation}

From \eqref{propphi} and \eqref{phi_h} it follows that
\begin{equation} \label{Dist}
{\rm dist}(\Gamma_h , \Gamma)=\max_{\bx\in \Gamma_h} {\rm dist}(\bx , \Gamma) \leq c h^{q+1}
 \end{equation}
holds.
In many applications  only such a finite element  approximation $\phi_h$ (e.g, resulting from the level set method) to the level set $\phi$ is known. For such a situation the finite element method formulated below is particularly well suited.
In cases where $\phi$ is known, one can take $\phi_h:=I_h(\phi)$, where  $I_h$ is a suitable
piecewise polynomial interpolation operator. If $\phi_h$ is a $P_1$ continuous finite element function, then
$\Gamma_h$ is a piecewise planar closed surface. In this practically convenient case, it is reasonable (if $\phi \in C^2(U_\delta)$) to assume that \eqref{phi_h} holds with $q=1$.

Consider the space of all continuous piecewise polynomial functions of  degree $k\ge1$ with respect to  $\T_h$:
\begin{equation} \label{bulk}
V_h^{\rm bulk}:=\{v\in C(\Omega)\,:~ v|_T\in P_k(T)\quad\forall\,T\in\T_h\}.
\end{equation}
We now define three \emph{trace spaces} of finite element functions:
\begin{equation}\label{FEspace}
\begin{aligned}
V_{\Gamma,h}&:=\{v\in C(\Gamma_h)~~:~ v=w|_{\Gamma_h}~~\text{for some}~w\in V_h^{\rm bulk}\},\\
V_{1,h}&:=\{v\in C(\Omega_{1,h})\,:~ v=w|_{\Omega_{1,h}}~~\text{for some}~w\in V_h^{\rm bulk}\},\\
V_{2,h}&:=\{v\in C(\Omega_{2,h})\,:~ v=w|_{\Omega_{2,h}}~~\text{for some}~w\in V_h^{\rm bulk}\}.
 \end{aligned}
\end{equation}
We need the  spaces $V_{\Omega,h}=V_{1,h}\times V_{2,h}$ and $\bV_{h}= V_{\Omega,h}\times V_{\Gamma,h} \subset H^1(\Omega_{1,h} \cup \Omega_{2,h}) \times H^1(\Gamma_h)$. The space $V_{\Omega,h}$ is studied in many papers on the so-called  cut finite element method or XFEM \cite{Hansbo02,Hansbo04,Belytschko03,Fries}. The trace space $V_{\Gamma,h}$  is introduced in \cite{OlshReusken08}. For representation of functions in these trace spaces we use the standard nodal basis functions in the space $V_h^{\rm bulk}$. In the space $V_{\Gamma,h}$ these functions do not yield a basis, but form only a frame. For the case $k=1$ this linear algebra issue is studied in \cite{OlshanskiiReusken08}.

We consider the finite element bilinear form on $\bV_{h}\times\bV_{h}$, which results from the bilinear form of
the differential problem using integration by parts in advection terms and further replacing $\Omega_{i}$ by $\Omega_{i,h}$ and
$\Gamma$ by $\Gamma_h$:
\[
 \begin{split}
a_h((u,v);(\eta,\zeta)) &= \sum_{i=1}^2\left\{(\nu_i\nabla u,\nabla \eta)_{\Omega_{i,h}} + \frac12\left[(\bw_h \cdot \nabla u,\eta)_{\Omega_{i,h}}-(\bw_h \cdot \nabla \eta, u)_{\Omega_{i,h}}\right]\right\}\\  & + \nu_\Gamma(\unablah v,\unablah \zeta)_{\Gamma_h} + \frac12\left[(\bw_h \cdot \unablah v,\zeta)_{\Gamma_h}-(\bw_h \cdot \unablah \zeta,v)_{\Gamma_h}\right]\\  & + \sum_{i=1}^2(u_i- q_i v,\eta_i-K\zeta)_{\Gamma_h}.
\end{split}
\]
In this formulation we use the transformed quantities as in \eqref{transfo}, but with $\Omega_i$, $\Gamma$ replaced by $\Omega_{i,h}$ and $\Gamma_h$, respectively. For example, on $\Omega_{i,h}$ we use the transformed viscosity $\tilde \nu_i:=\tilde k_{i,a}^{-1} \nu_i$, with $\nu_i$ the dimensionless  viscosity as in \eqref{total}. Similarly, the transformed velocity field $\tilde \bw_h \in [H^1(\Omega_{1,h}\cup\Omega_{2,h})]^3 \cap H^1(\Gamma_h)^3$ is obtained after the transformation $ \tilde \bw_h := \tilde k_{i,a}^{-1} \bw$ on $ \Omega_{i,h}$, $i=1,2$,  and $\tilde \bw_h:= \bw/(\tilde k_{1,a}+ \tilde k_{2,a})$ on $\Gamma_h$,  with $\bw$ the dimensionless smooth velocity vector field as in \eqref{total}.
As in \eqref{Totaltrans} we omit the tilde notation in the transformed quantities. In the derivation  of the skew-symmetry result \eqref{w-forms} we used that $\bw\cdot \bn =0$ holds on $\Gamma$. The property $\bw_h\cdot \bn_h =0$, however, does not necessarily hold on $\Gamma_h$. Skew-symmetry of the convection terms in $a_h(\cdot,\cdot)$ is enforced by using the skew-symmetric forms in the square brackets above.
Let $g_h\in L^2(\Gamma_h)$, $f_h\in L^2(\Omega)$ be given and satisfy
\begin{equation}\label{gaugeh}
K(f_h,1)_{\Omega}+(g_h,1)_{\Gamma_h}=0.
\end{equation}
As discrete gauge condition we introduce, cf. \eqref{Gau},
\begin{equation} \label{Gaugediscr}
 K(1+r)(u_h,1)_{\Omega_{1,h}}+ K(1+\frac{1}{r})(u_h,1)_{\Omega_{2,h}} + (v_h,1)_{\Gamma_h}=0, \quad r:= \frac{\tilde k_{2,a}}{\tilde k_{1,a}}.
\end{equation}
Furthermore, define
\[
\bV_{h,\alpha}:=\{\, (\eta,\zeta)\in \bV_h \,:\,\alpha_1 (\eta,1)_{\Omega_{1,h}} + \alpha_2 (\eta,1)_{\Omega_{2,h}} + (\zeta,1)_{\Gamma_h}=0\},\quad
\]
for arbitrary (but fixed) $\alpha_1, \alpha_2\ge 0$, and $\bVt_{h}:=\bV_{h,\alpha},$ with $\alpha_1=K(1+r)$, $\alpha_2=K(1+\frac{1}{r})$.
The finite element method is as follows: Find $(u_h,v_h)\in \bVt_h$ such that
\begin{equation} \label{weakh}
a_h((u_h,v_h);(\eta,\zeta))=(f_h,\eta)_{\Omega}+(g_h,\zeta)_{\Gamma_h}\quad\text{for all}~(\eta,\zeta)\in\bV_h.
\end{equation}
With the same arguments as for the continuous problem, cf.~\eqref{weak2}, based on the consistency condition \eqref{gaugeh} we obtain an equivalent discrete problem if the test space $\bV_h$ is replaced by $\bV_{h,\alpha}$. The latter formulation is used in the analysis below.
We shall use the Poincare and Friedrich's inequalities \eqref{Poincare}--\eqref{Poincar} with  $\Omega_{i}$ replaced by $\Omega_{i,h}$ and $\Gamma$ by $\Gamma_h$. We assume that the corresponding Poincare-Friedrich's constants are bounded uniformly in $h$.

In the finite element space we use the norm given by
\[ \|(\eta,\zeta)\|_{\bV_h}^2:= \|\eta\|_{H^1(\Omega_{1,h} \cup \Omega_{2,h})}^2+ \|\zeta\|_{H^1(\Gamma_h)}^2, \quad (\eta,\zeta) \in H^1(\Omega_{1,h} \cup \Omega_{2,h}) \times H^1(\Gamma_h).
\]
Repeating the same arguments as in the proof of Theorem~\ref{Infsupp} and Lemmas~\ref{lemm}, and~\ref{lemmA}, we obtain an inf-sup stability result for the discrete
bilinear form and its dual as well as a continuity estimate.

\begin{theorem}\label{L_infsup2h}  {\rm(i)}~~For any  $q_1,q_2 \in [0,1]$, there exists  $\alpha$  such that
\begin{equation}\label{infsup2h}
\inf_{(u,v)\in\bVt_h}\sup_{(\eta,\zeta)\in\bV_{h,\alpha}}\frac{a_h((u,v);(\eta,\zeta))}{\|(u,v)\|_{\bV_h}\|(\eta,\zeta)\|_{\bV_h}}
\ge C_{st}>0,
\end{equation}
with a positive constant $C_{st}$ independent of $h$ and of $q_1,q_2 \in [0,1]$.\\
{\rm(ii)} There is a constant $c$ independent of $h$ such that
\begin{equation}\label{conth}
a_h((u,v);(\eta,\zeta))\le c\|(u,v)\|_{\bV_h}\|(\eta,\zeta)\|_{\bV_h }
\end{equation}
for all $(u,v), (\eta,\zeta)\in H^1(\Omega_{1,h}\cup \Omega_{2,h})\times H^1(\Gamma_h)$.
\end{theorem}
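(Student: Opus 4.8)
The plan is to follow the proofs of Theorem~\ref{Infsup2} and Lemmas~\ref{lemm}, \ref{lemmA} almost verbatim, the only substantive new point being the $h$-uniformity of the constants. The key observation is that $a_h(\cdot;\cdot)$ has exactly the same algebraic structure as $a(\cdot;\cdot)$: the two convection contributions now appear in the explicitly skew-symmetric form $\tfrac12[(\bw_h\cdot\nabla u,\eta)-(\bw_h\cdot\nabla\eta,u)]$ (and analogously on $\Gamma_h$), which takes over the role of property \eqref{w-forms}; and the coupling term $\sum_{i=1}^2(u_i-q_iv,\eta_i-K\zeta)_{\Gamma_h}$ is identical in form. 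In particular, every test-function choice used in the proof of Theorem~\ref{Infsup2} is ``diagonal'' — each component is proportional to the corresponding component of $(u,v)$, e.g. $\eta=\beta u,\ \zeta=v$; or $\eta_1=u_1,\ \eta_2=\beta u_2,\ \zeta=K^{-1}q_1v$ — so the skew-symmetric convection terms vanish \emph{identically}. No integration by parts and no use of $\bw_h\cdot\bn_h=0$ (which indeed fails on $\Gamma_h$) is needed; this makes the discrete argument, if anything, slightly simpler than the continuous one.

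First I would record the discrete analogues of the Poincar\'e--Friedrich's inequalities \eqref{Poincare}--\eqref{Poincar} and of Lemma~\ref{LempoincareA} with $\Omega_i,\Gamma$ replaced by $\Omega_{i,h},\Gamma_h$; by the standing assumption these hold with $h$-uniform constants, and Lemma~\ref{LempoincareA} itself transcribes directly (same Petree--Tartar argument on $X_h=H^1(\Omega_{1,h})\times H^1(\Omega_{2,h})\times H^1(\Gamma_h)$, using the $h$-uniform compact embeddings and Poincar\'e constants). Then I would run the three-case argument of Theorem~\ref{Infsup2} line by line: $q_1,q_2\in[0,\eps]$ with $\eta=\beta u,\ \zeta=v$; $q_1\ge\eps,\ q_2\in[0,\delta]$ with the combination $\eta_1=u_1,\ \eta_2=\beta u_2,\ \zeta=K^{-1}q_1v$; and $q_1,q_2\ge\delta$ with $\eta_1=u_1,\ \eta_2=\tfrac{q_1}{q_2}u_2,\ \zeta=K^{-1}q_1v$ — in each case using the discrete gauge condition \eqref{Gaugediscr} in place of \eqref{Gau} and the discrete Poincar\'e inequalities. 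Each step yields $a_h((u,v);(\eta,\zeta))\ge c\|(u,v)\|_{\bV_h}^2\ge c\|(u,v)\|_{\bV_h}\|(\eta,\zeta)\|_{\bV_h}$ with $c$ depending only on the viscosities, $K$, $r$, $\eps$, $\delta$ and the ($h$-uniform) Poincar\'e--Friedrich's constants, hence $C_{st}$ is independent of $h$ and of $q_1,q_2\in[0,1]$, and $(\eta,\zeta)\in\bV_{h,\alpha}$ for the $\alpha=\alpha(q_1,q_2)$ determined exactly as before. This gives (i) for the primal form; the dual inf-sup statement then follows as in Lemma~\ref{lemm}, since for each $(\eta,\zeta)\in\bV_{h,\alpha}$ the pair $(u,v)\in\bVt_h$ produced by the same recipe satisfies the identical bound.

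For part (ii) I would estimate each term of $a_h$ by Cauchy--Schwarz: the gradient and Laplace--Beltrami terms directly; the convection terms using $\bw_h\in[L^\infty(\Omega_{1,h}\cup\Omega_{2,h})]^3$ and $\bw_h\in[L^\infty(\Gamma_h)]^3$; and the interface coupling term using the trace inequality $\|w\|_{\Gamma_h}\le c_{\rm tr}\|w\|_{H^1(\Omega_{i,h})}$. Summing gives \eqref{conth}. The only genuinely new point relative to the continuous analysis — and hence the main obstacle — is the $h$-uniformity of the Poincar\'e--Friedrich's constants and of $c_{\rm tr}$ on the \emph{cut} subdomains $\Omega_{i,h}$ and on the discrete surface $\Gamma_h$. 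The Poincar\'e--Friedrich's part is disposed of by the standing assumption; for the trace bound one uses that $\Gamma_h$ is a Lipschitz surface contained in $U_\delta$ and uniformly close to the fixed smooth $\Gamma$ (cf.~\eqref{Dist}) over a shape-regular mesh, so the geometric constants entering the estimate are $h$-independent. Once these are granted, the whole stability analysis is a transcription of Theorem~\ref{Infsup2} and Lemmas~\ref{lemm}, \ref{lemmA}.
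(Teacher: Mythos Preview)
Your proposal is correct and matches the paper's approach exactly: the paper's proof consists of the single sentence ``Repeating the same arguments as in the proof of Theorem~\ref{Infsup2} and Lemmas~\ref{lemm}, and~\ref{lemmA}, we obtain \ldots'', and you have spelled out precisely how that repetition goes, including the correct observation that the explicitly skew-symmetric convection terms in $a_h$ vanish for the diagonal test-function choices (so no analogue of \eqref{w-forms} needs to be verified on $\Gamma_h$). Your identification of the $h$-uniform Poincar\'e--Friedrich's and trace constants as the only genuinely new ingredient is also exactly what the paper does --- it simply \emph{assumes} uniform bounds for \eqref{Poincare}--\eqref{Poincar} on $\Omega_{i,h},\Gamma_h$ just before stating the theorem.
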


As a corollary of this theorem we obtain  the well-posedness result for the discrete problem.
\begin{theorem}\label{Th_wph} For any $f_h\in L^2(\Omega_h)$, $g_h\in L^2(\Gamma_h)$ such that \eqref{gaugeh} holds, there exists a unique solution $(u_h,v_h)\in \bV_h$ of \eqref{weakh}. For this solution the a-priori estimate
\[
\|(u_h,v_h)\|_{\bV_h}\le C_{st}^{-1} \|(f_h,g_h)\|_{\bV'_h}\le c (\|f_h\|_{\Omega_h}+\|g_h\|_{\Gamma_h})
\]
holds. The constants $C_{st}$ and $c$ are independent of $h$.
\end{theorem}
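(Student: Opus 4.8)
The plan is to deduce the theorem from Theorem~\ref{L_infsup2h} in the same way that Theorem~\ref{Th_wp} followed from Theorem~\ref{Infsup2} together with Lemmas~\ref{lemm} and \ref{lemmA}, the simplification being that $\bV_h$ is finite dimensional, so a single discrete inf-sup estimate already gives existence, uniqueness, and stability, and the dual inf-sup of Lemma~\ref{lemm} is not needed. First I would pass to the equivalent reduced formulation described in the paragraph after \eqref{weakh}: since $f_h,g_h$ satisfy the discrete consistency relation \eqref{gaugeh}, solving \eqref{weakh} is equivalent to finding $(u_h,v_h)\in\bVt_h$ such that $a_h((u_h,v_h);(\eta,\zeta))=(f_h,\eta)_{\Omega}+(g_h,\zeta)_{\Gamma_h}$ for all $(\eta,\zeta)\in\bV_{h,\alpha}$, where $\alpha=\alpha(q_1,q_2)$ is the vector provided by Theorem~\ref{L_infsup2h}(i).

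For this reduced problem I would argue by a dimension count. Both $\bVt_h$ and $\bV_{h,\alpha}$ are kernels of a single linear functional on the finite-dimensional space $\bV_h$, and in each case the functional does not vanish identically (it is positive on a pair of constant functions equal to one), so $\dim\bVt_h=\dim\bV_{h,\alpha}=\dim\bV_h-1<\infty$. Define the linear operator $\mathcal{A}_h:\bVt_h\to(\bV_{h,\alpha})'$ by $\langle\mathcal{A}_h(u,v),(\eta,\zeta)\rangle:=a_h((u,v);(\eta,\zeta))$; it is well defined and bounded by the continuity estimate \eqref{conth}. The inf-sup bound \eqref{infsup2h} states precisely that $\mathcal{A}_h$ is injective, and since its domain and codomain have the same finite dimension, $\mathcal{A}_h$ is bijective. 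Hence the reduced problem has a unique solution $(u_h,v_h)\in\bVt_h\subset\bV_h$, and by the equivalence above this is the unique solution of \eqref{weakh}.

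The a-priori estimate then comes directly from the inf-sup constant. Applying \eqref{infsup2h} to the solution and inserting the equation,
\[
C_{st}\,\|(u_h,v_h)\|_{\bV_h}
\le\sup_{(\eta,\zeta)\in\bV_{h,\alpha}}\frac{a_h((u_h,v_h);(\eta,\zeta))}{\|(\eta,\zeta)\|_{\bV_h}}
=\sup_{(\eta,\zeta)\in\bV_{h,\alpha}}\frac{(f_h,\eta)_{\Omega}+(g_h,\zeta)_{\Gamma_h}}{\|(\eta,\zeta)\|_{\bV_h}}
\le\|(f_h,g_h)\|_{\bV'_h},
\]
which gives the first inequality. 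The second inequality is Cauchy--Schwarz: $(f_h,\eta)_{\Omega}\le\|f_h\|_{\Omega_h}\|(\eta,\zeta)\|_{\bV_h}$ and $(g_h,\zeta)_{\Gamma_h}\le\|g_h\|_{\Gamma_h}\|\zeta\|_{\Gamma_h}\le\|g_h\|_{\Gamma_h}\|(\eta,\zeta)\|_{\bV_h}$, so $\|(f_h,g_h)\|_{\bV'_h}\le\|f_h\|_{\Omega_h}+\|g_h\|_{\Gamma_h}$; note that, unlike in other parts of the analysis, no trace inequality on $\Gamma_h$ is needed here, since $\zeta\in H^1(\Gamma_h)$ directly. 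Because $C_{st}$ in \eqref{infsup2h} is independent of $h$ by Theorem~\ref{L_infsup2h}, the resulting constants are $h$-independent.

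I do not expect a real obstacle: the statement is the standard consequence of a uniform discrete inf-sup condition on a finite-dimensional space. The only point deserving attention is the reduction to the square system on $\bVt_h\times\bV_{h,\alpha}$, i.e.\ verifying that the formally over-determined problem \eqref{weakh} — whose test space has dimension one larger than the trial space $\bVt_h$ — is compatible; this is exactly where the discrete consistency condition \eqref{gaugeh} enters, in complete analogy with the reduction of \eqref{weak} to \eqref{weak2} in the continuous case.
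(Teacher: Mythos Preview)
Your proposal is correct and matches the paper's approach: the paper gives no explicit proof but simply states that the result is ``a corollary'' of Theorem~\ref{L_infsup2h}, and what you have written is precisely the standard finite-dimensional inf-sup argument that constitutes this corollary. Your observation that finite dimensionality makes the dual inf-sup of Lemma~\ref{lemm} unnecessary, together with the dimension count $\dim\bVt_h=\dim\bV_{h,\alpha}$, is exactly the right way to fill in the details.
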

\section{Error analysis}\label{s_error} For the error analysis we assume that the level set function, which characterizes the interface $\Gamma$, has smoothness $\phi \in C^{q+1}(U_\delta)$, with $q \geq 1$. This implies that the estimate \eqref{phi_h} holds. In the analyis below, we also need the smoothness assumption $\Gamma\in C^{k+1}$, where $k \geq 1$ is the degree of the polynomials used in the finite element space, cf.~\eqref{bulk}.  Concerning the velocity field $\bw$ we need that the original (unscaled) velocity $\bw$ is sufficiently smooth, $\bw \in H^{1, \infty}(\Omega)$. In the remainder of this section and in section~\ref{S_errorL2} we assume that these smoothness requirements are satisfied. \\
The smoothness properties of $\Gamma$ imply that there exists a $C^2$ signed distance function $d:U_\delta \rightarrow \mathbb{R}$ such that $\Gamma=\{\bx \in U_\delta: d(\bx)=0\}$.     We assume that $d$ is negative on $\Omega_1 \cap U_\delta$ and positive on $\Omega_2 \cap U_\delta$.
Thus for $\bx \in U_\delta$, ${\rm dist}(\bx, \Gamma)=|d(\bx)|$.
Under these conditions, for $\delta >0 $ sufficiently small, but independent of $h$, there is an orthogonal projection $p:U_\delta \rightarrow \Gamma$ given by $\bp(\bx)=\bx-d(\bx) \bn(\bx)$, where $\bn(\bx)=\nabla d(\bx)$.
Let $\bH=D^2 d= \nabla \bn$ be the Weingarten map. More details of the present formalism can be found in  \cite{DD07}, \S~2.1.

Given $v \in H^1(\Gamma)$, we denote by  $v^e \in H^1(U_\delta)$ its extension from $\Gamma$ along normals, i.e. the function defined by $v^e(\bx)=v(\bp(\bx))$; $v^e$ is constant in the direction normal to $\Gamma$.
The following holds:
\begin{equation}
\nabla  v^e(\bx)  = (\mathbf{I} -  d(\bx)\bH(\bx))\unabla v(\bp(\bx))\quad \text{for}~~\bx \in U_\delta.
 \label{nabla1}
\end{equation}
We need some further (mild)  assumption on how well the mesh resolves the geometry of the (discrete) interface.
We assume that $\Gamma_h\subset U_\delta$  is the graph of a function $\gamma_h(\bs)$, $\bs\in\Gamma$  in the local coordinate system $(
\bs,r)$, $\bs \in \Gamma$, $r \in [-\delta,\delta]$, with  $\bx=\bs
+ r \bn(\bs)$:
\[ \Gamma_h=\{\, (\bs, \gamma_h(\bs))~: ~\bs \in \Gamma\,\}.
 \]
From \eqref{Dist} it follows that
\begin{equation} \label{Dist1}
 |\gamma_h(\bs)| ={\rm dist}\big(\bs + \gamma_h(\bs) \bn(\bs),\Gamma\big) \leq {\rm dist}(\Gamma_h, \Gamma) \leq c h^{q+1},
\end{equation}
with a constant $c$ independent of $\bs \in \Gamma$.

\subsection{Bijective mapping $\Omega_{i,h} \to \Omega_i$}
For the analysis of the consistency error we need a bijective mapping  $\Omega_{i,h} \to \Omega_i$, $i=1,2$. We use a mapping that is similar to the one given in Lemma~5.1 in \cite{OlshSafin}. For the analysis we need a tubular neighborhood $U_\delta$, with a radius $\delta$ that depends on $h$. We define $\delta_h:=c h$, with a constant $c>0$ that is fixed in the remainder. We assume that $h$ is sufficiently small such that $\Gamma_h \subset U_{\delta_h} \subset U_\delta$ holds, cf.~ \eqref{Dist}. Define  $\Phi_h:\,{\Omega}\to{\Omega}$ as (cf. Fig.~\ref{fig:Phi})
\begin{equation} \label{defPhi}
\Phi_h(\bx)=\left\{\begin{aligned} &
\bx-\bn(\bx)\frac{\delta_h^2-d(\bx)^2}{\delta_h^2-\gamma_h^e(\bx)^2}\gamma_h^e(\bx)&~~\text{if}~~ \bx\in \bar U_{\deltah},\\
&\bx&~~\text{if}~~ \bx\in \Omega\setminus\ U_{\deltah}.
\end{aligned}\right.
\end{equation}
We assume that $h$ is sufficiently small such that for all $\bx\in \bar U_{\deltah}$ the estimate $\deltah^2-\gamma_h^e(\bx)^2> \tilde c h^2 $ holds with  a mesh independent constant $\tilde c>0$.
Using this  and the definition in \eqref{defPhi}, we conclude that $\Phi_h$ is a bijection on $\Omega$ with the  properties:
\[
 \Phi_h(\Omega_{i,h})=\Omega_i,\quad \Phi_h(\bx)=\bp(\bx)~~\mbox{for}~\bx\in\Gamma_h,\quad  \bp(\Phi_h(\bx))=\bp(\bx)~~\mbox{for}~\bx\in U_{\deltah}.
\]
\begin{figure}[ht!]
  \begin{center}
  \begin{picture}(300,120)
    \put(0,-10){\includegraphics[width=0.7\textwidth,trim=0 0 0 100, clip]{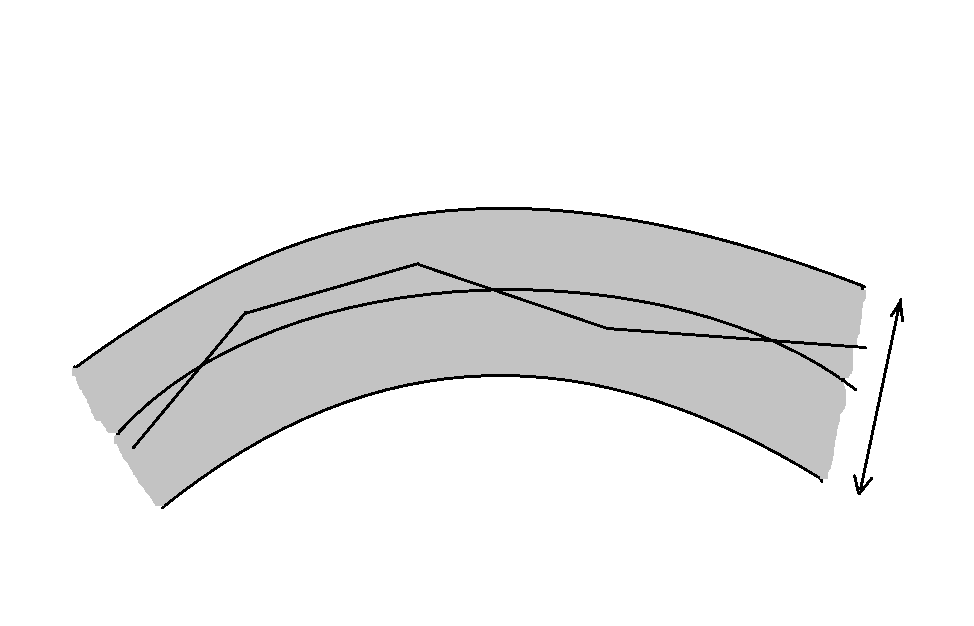}}
    \put(240,45){$O(h)$}
    \put(68,5){$\Gamma_h$}
    \put(67,7){\vector(-1,1){30}}
    \put(5,75){\vector(1,-1){30}}
    \put(7,80){$\Gamma$}
    \put(130,62){$U_{\deltah}$}
    \put(130,30){$\Phi_h=I$}
    \put(50,100){$\Phi_h=I$}
  \end{picture}
  \end{center}
  \caption{Illustration to the construction of $\Phi_h$: $\Phi_h$ gradually stretches $U_{\deltah}$ (grey region) along normal directions to $\Gamma$ such that $\Phi_h(\Gamma_h)=\Gamma$. }\label{fig:Phi}
\end{figure}

Some further properties of this mapping are derived in the following lemma.
\begin{lemma}\label{L_Phi} Consider $\Gamma_h$ as defined in \eqref{Omega_h}, with $\phi_h$ such that \eqref{phi_h} holds. The mapping $\Phi_h$ has the smoothness properties $\Phi_h\in \left(H^{1,\infty}(\Omega)\right)^3$,
$\Phi_h\in \left(H^{1,\infty}(\Gamma_h)\right)^3$.  Furthermore, for $h$ sufficiently small the estimates
\begin{align}\label{mapping}
\|\mbox{\rm id}-\Phi_h\|_{L^\infty(\Omega)}+ h \|\bI-\mathrm{D}\Phi_h\|_{L^\infty(\Omega)} & \le c\,h^{q+1}
\\
\|1-\mathrm{det}(\mathrm{D}\Phi_h)\|_{L^\infty(\Omega)} & \le c\,h^{q}\label{det}
\end{align}
hold, where $\mathrm{D}\Phi_h$ is the Jacobian matrix. For surface area elements we have
\begin{equation}\label{muh2}
d\bs(\Phi_h(\bx))=  \mu_h d\bs_h(\bx),\quad\bx\in\Gamma_h,\quad\text{with}~~\|1-\mu_h\|_{L^\infty(\Gamma_h)}\le c\,h^{q+1}.
\end{equation}
\end{lemma}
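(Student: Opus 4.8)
The plan is to verify each stated property of $\Phi_h$ directly from its explicit definition in \eqref{defPhi}, exploiting the regularity estimates \eqref{phi_h}, \eqref{Dist1} and the smoothness $d \in C^2(U_\delta)$ of the signed distance function. The key quantity to control is $\gamma_h^e$, the constant-in-normal-direction extension of $\gamma_h$. From \eqref{Dist1} we have $\|\gamma_h^e\|_{L^\infty(U_{\deltah})} \le c h^{q+1}$, and I claim a corresponding bound $\|\nabla \gamma_h^e\|_{L^\infty(U_{\deltah})} \le c h^q$ on its gradient; this follows by expressing $\gamma_h$ via the zero level set of $\phi_h$ and differentiating, using \eqref{phi_h} together with $|\nabla \phi| \ge c_0$ so that the implicit function theorem applies uniformly. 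The denominator $\deltah^2 - \gamma_h^e(\bx)^2$ is bounded below by $\tilde c h^2$ by the standing assumption, and $\deltah^2 - d(\bx)^2 \le \deltah^2 = c^2 h^2$ on $\bar U_{\deltah}$, so the scalar factor $\frac{\delta_h^2-d^2}{\delta_h^2-(\gamma_h^e)^2}$ is bounded by a mesh-independent constant. Multiplying by $\bn(\bx) \gamma_h^e(\bx)$ and using $\|\gamma_h^e\|_\infty \le c h^{q+1}$ gives at once
\[
 \|\mathrm{id} - \Phi_h\|_{L^\infty(\Omega)} \le c h^{q+1},
\]
which is the first half of \eqref{mapping}. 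The smoothness $\Phi_h \in (H^{1,\infty}(\Omega))^3$ follows since $d, \bn \in C^2$, $\gamma_h^e$ is Lipschitz (being piecewise smooth with the uniform bound above), and the denominator stays bounded away from zero; that $\Phi_h \in (H^{1,\infty}(\Gamma_h))^3$ follows because $\Gamma_h$ is a Lipschitz graph over $\Gamma$ contained in $U_{\deltah}$ and restriction preserves $H^{1,\infty}$.

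For the Jacobian bound $h\|\bI - \mathrm{D}\Phi_h\|_{L^\infty(\Omega)} \le c h^{q+1}$, i.e. $\|\bI - \mathrm{D}\Phi_h\|_{L^\infty} \le c h^q$, I would differentiate \eqref{defPhi} by the product rule. Writing $\Phi_h(\bx) = \bx - \bn(\bx)\psi_h(\bx)$ with $\psi_h := \frac{\delta_h^2-d^2}{\delta_h^2-(\gamma_h^e)^2}\gamma_h^e$, we get $\mathrm{D}\Phi_h = \bI - \psi_h \bH - \bn \otimes \nabla\psi_h$ (using $\nabla \bn = \bH$). The term $\psi_h \bH$ is $O(h^{q+1})$ in $L^\infty$ since $\|\psi_h\|_\infty \le c\|\gamma_h^e\|_\infty \le c h^{q+1}$ and $\bH$ is bounded. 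For $\nabla\psi_h$ the dangerous contribution comes from differentiating the numerator $\delta_h^2 - d^2$: its gradient is $-2d\,\nabla d$, and since $|d| \le \delta_h = ch$ on $\bar U_{\deltah}$ this is $O(h)$; dividing by the denominator which is $\Theta(h^2)$ produces an $O(h^{-1})$ factor, which multiplied by $\gamma_h^e = O(h^{q+1})$ gives $O(h^q)$. The other pieces of $\nabla\psi_h$ (from $\nabla\gamma_h^e = O(h^q)$ times the bounded ratio, and from differentiating the denominator, which gives $\frac{2\gamma_h^e \nabla\gamma_h^e}{(\delta_h^2-(\gamma_h^e)^2)^2}(\delta_h^2-d^2)\gamma_h^e$, of size $\frac{h^{q+1}\cdot h^q}{h^4}\cdot h^2 \cdot h^{q+1} = O(h^{3q-1}) = O(h^q)$ since $q \ge 1$) are all $O(h^q)$. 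Hence $\|\bI - \mathrm{D}\Phi_h\|_{L^\infty} \le c h^q$, establishing the second half of \eqref{mapping}.

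The determinant estimate \eqref{det} then follows from \eqref{mapping} by the standard fact that for a matrix $M$ with $\|\bI - M\| \le \tfrac12$ one has $|1 - \det M| \le c\|\bI - M\|$ (expand $\det M$ as a polynomial in the entries of $M - \bI$); applied with $M = \mathrm{D}\Phi_h(\bx)$ this yields $\|1 - \det(\mathrm{D}\Phi_h)\|_{L^\infty(\Omega)} \le c h^q$. For the surface-measure estimate \eqref{muh2}, I would use the standard formula relating the area elements of a surface and its image under a $C^1$ diffeomorphism: $\mu_h(\bx) = \det(\mathrm{D}\Phi_h(\bx))\,\big|(\mathrm{D}\Phi_h(\bx))^{-T}\bn_h(\bx)\big|$, where $\bn_h$ is the unit normal to $\Gamma_h$. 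Both factors differ from $1$ by $O(\|\bI - \mathrm{D}\Phi_h\|)$ away from the tangential directions, but the gain from $h^q$ to $h^{q+1}$ requires a sharper argument: since on $\Gamma_h$ the map $\Phi_h$ moves points only in the normal direction $\bn$ by the amount $\gamma_h^e = O(h^{q+1})$ (indeed $\Phi_h(\bx) = \bp(\bx)$ there), the \emph{tangential} derivatives of $\Phi_h$ along $\Gamma_h$ — which are the only ones entering the surface Jacobian $\mu_h$ — pick up only the $O(h^{q+1})$-small curvature term $\psi_h \bH$ and the tangential part of $\bn\otimes\nabla\psi_h$, while the problematic $O(h^q)$ normal-derivative term $-2d\,\bn\otimes\nabla d/(\ldots)$ contributes in the $\bn$-direction and is annihilated when one restricts $\mathrm{D}\Phi_h$ to the tangent space of $\Gamma_h$. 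Carrying this out gives $\|1 - \mu_h\|_{L^\infty(\Gamma_h)} \le c h^{q+1}$.

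The main obstacle is the last point: obtaining the sharp $h^{q+1}$ (rather than merely $h^q$) rate in \eqref{muh2} for the surface area distortion. This forces a careful bookkeeping showing that the only $O(h^q)$ term in $\mathrm{D}\Phi_h$ lives in the normal direction and therefore drops out of the \emph{tangential} Jacobian that governs the surface measure — essentially one must differentiate along $\Gamma_h$ rather than in all of $\R^3$. Everything else is a routine but somewhat lengthy exercise in the chain rule combined with the bounds $\|\gamma_h^e\|_\infty \le ch^{q+1}$, $\|\nabla\gamma_h^e\|_\infty \le ch^q$, $|d| \le ch$ on $\bar U_{\deltah}$, and the uniform lower bound on the denominator.
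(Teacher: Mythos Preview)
Your treatment of the regularity of $\gamma_h$, of \eqref{mapping}, and of \eqref{det} is essentially the paper's argument, just spelled out in more detail: the paper also derives $\|\gamma_h\|_{L^\infty(\Gamma)}+h\|\unabla\gamma_h\|_{L^\infty(\Gamma)}\le ch^{q+1}$ from the implicit relation $\phi_h(\bx+\gamma_h(\bx)\bn(\bx))=0$, bounds the factor $l=(\delta_h^2-d^2)/(\delta_h^2-(\gamma_h^e)^2)$ by $\|l\|_{L^\infty}\le c$, $\|\nabla l\|_{L^\infty}\le ch^{-1}$, and reads off \eqref{mapping} and \eqref{det} exactly as you do.

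For \eqref{muh2} the paper takes a different and much shorter route. It uses that $\Phi_h=\bp$ on $\Gamma_h$, so the surface Jacobian of $\Phi_h|_{\Gamma_h}$ is that of the closest-point projection, for which the explicit formula
\[
  \mu_h(\bx)=(1-d(\bx)\kappa_1(\bx))(1-d(\bx)\kappa_2(\bx))\,\bn(\bx)^T\bn_h(\bx)
\]
is available from the literature. Since $|d|\le ch^{q+1}$ on $\Gamma_h$ and $1-\bn^T\bn_h=\tfrac12|\bn-\bn_h|^2=O(h^{2q})$ by \eqref{phi_h}, the bound $\|1-\mu_h\|_{L^\infty(\Gamma_h)}\le ch^{q+1}$ is immediate. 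Your direct computation via the general surface-area formula can be made to work, but two points in your sketch need correction. First, on $\Gamma_h$ the term you single out, $-2d\,\bn\otimes\nabla d/(\dots)\cdot\gamma_h^e$, is actually $O(h^{2q})$ because $|d|\le ch^{q+1}$ there (not $|d|\le ch$ as in the bulk); the genuine $O(h^q)$ contribution to $\nabla\psi_h$ on $\Gamma_h$ is $l\,\nabla\gamma_h^e$. Second, the rank-one piece $\bn\otimes\nabla\psi_h$ is not ``annihilated'' upon restriction to $T\Gamma_h$: applied to a tangent vector $\tau$ it produces $(\nabla\psi_h\cdot\tau)\bn\neq 0$. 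What actually saves you is that this output lies in the $\bn$-direction, and adding an $O(h^q)$ multiple of $\bn$ to each of two $\Gamma_h$-tangent vectors changes the \emph{magnitude} of their cross product only at order $O(h^{2q})+O(h^{q+1})$ (the perturbation is nearly orthogonal to $\tau_1\times\tau_2\parallel\bn_h$, and $|\bn-\bn_h|=O(h^q)$). So the idea is right, but the bookkeeping is subtler than your sketch suggests; the paper's projection-Jacobian formula bypasses all of this.
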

\begin{proof} The discrete surface $\Gamma_h$ is the graph of $\gamma_h(\bx)$, $\bx\in\Gamma$.
Therefore,   $\gamma_h(\bx)$ satisfies
\begin{equation}\label{implicit}
\phi_h(\bx+\gamma_h(\bx)\bn(\bx))=0,\quad \bx\in\Gamma.
\end{equation}
Since $\phi_h$ is continuous, piecewise polynomial and $\bn(\bx)\in C^q(U_\delta)^3$, $q\ge1$, the properties of an implicit
function imply that  $\gamma_h$ is continuous, piecewise $C^1$ and hence  $\gamma_h\in H^{1,\infty}(\Gamma)$.
The smoothness properties of $\Phi_h$ now follow from the construction.
To compute the surface gradient of $\gamma_h$,  we differentiate   this identity and using the chain rule we obtain the relation:
\[
\unabla \gamma_h(\bx) = -\frac{(I+\gamma_h(\bx)\bH(\bx))\unabla \phi_h(\bx')}{\bn(\bx)\cdot\nabla \phi_h(\bx')},\quad
\bx'=\bx+ \gamma_h(\bx)\bn(\bx),\quad \bx\in\Gamma.
\]
For the denominator in this expression we get,
using $|\bx - \bx'| \leq {\rm dist}(\Gamma_h,\Gamma) \leq c h^{q+1}$, \eqref{propphi}, \eqref{phi_h} and taking $h$ sufficiently small:
\begin{align*}
 |\bn(\bx)\cdot \nabla \phi_h(\bx')| &= \big|\bn(\bx)\cdot (\nabla \phi_h(\bx')-\nabla \phi(\bx'))+\bn(\bx)\cdot( \nabla \phi(\bx')-\nabla \phi(\bx))+ |\nabla \phi(\bx)|\big| \\
& \geq c_0 - ch^q \geq \frac12 c_0.
\end{align*}
For the nominator we use $\unabla \phi(\bx)=0$ and \eqref{phi_h} to get:
\begin{align*}
| \unabla \phi_h(\bx')| \leq  |\unabla (\phi_h(\bx')-\phi(\bx'))| +  |\unabla (\phi(\bx')- \phi(\bx))| \leq ch^q.
\end{align*}
From this and \eqref{Dist} we infer
\begin{equation}\label{eta_h}
\|\gamma_h\|_{L^\infty(\Gamma)}+ h \|\unabla\gamma_h\|_{L^\infty(\Gamma)}\le c\,h^{q+1}.
\end{equation}
 The following  surface
  area transformation property can be found in, e.g.,  \cite{Demlow09,DD07}:
\begin{align*}
  \mu_h(\bx) d \bs_h(\bx) & = d \bs(\bp(\bx)), \quad \bx \in \Gamma_h, \\ \mu_h(\bx)& := (1-d(\bx) \kappa_1(\bx))(1-d(\bx) \kappa_2(\bx)) \bn(\bx)^T \bn_h(\bx),
\end{align*}
with $\kappa_1,\kappa_2$ the nonzero eigenvalues of the Weingarten map and $\bn_h$ the unit normal on $\Gamma_h$. Note that
 $\Phi_h(\bx)=\bp(\bx)$ on $\Gamma_h$ holds. From \eqref{phi_h} we get $\|1- \mu_h\|_{L^\infty(\Gamma_h)} \leq ch^{q+1}$. Hence, the result in \eqref{muh2} holds.
For the term $l(\bx):=\frac{\delta_h^2-d(\bx)^2}{\delta_h^2-\gamma_h^e(\bx)^2}$, with $ \bx\in \bar U_{\deltah}$, used in \eqref{defPhi} we have
$\|l\|_{L^\infty(U_{\deltah})} \leq c$ and $\|\nabla l\|_{L^\infty(U_{\deltah})} \leq ch^{-1}$. Using these estimates and \eqref{nabla1}, \eqref{eta_h}
we obtain  $\|{\rm id}-\Phi_h\|_{L^\infty(\Omega)}\le ch^{q+1}$ and $\|\bI-\mathrm{D}\Phi_h\|_{L^\infty(\Omega)}\le ch^q$.
 This proves \eqref{mapping}.
The result in \eqref{det} immediately follows from \eqref{mapping}.
\quad
\end{proof}

\subsection{Smooth extensions}
For functions $v$ on $\Gamma$ we have introduced above the smooth constant extension along normals, denoted by $v^e$. Below we also need a smooth extension to $\Omega_{i,h}$ of functions $u$ defined on $\Omega_i$. This extension will also be denoted by $u^e$. Note that $u\circ \Phi_h$ defines an extension to $\Omega_{i,h}$. This extension, however, has smoothness $H^{1, \infty}(\Omega_{i,h})$, which is not sufficient for the interpolation estimates that we use further on. Hence, we introduce an extension $u^e$, which is close to $u\circ \Phi_h$ in the sense as specified in Lemma~\ref{lemmsm} and is more regular.

As mentioned above, we make the smoothness assumption $\Gamma\in C^{k+1}$, where $k$ is the degree of the polynomials used in the finite element space, cf.~\eqref{bulk}. We denote by $E_i$ a linear bounded extension operator $H^{k+1}(\Omega_i)\to H^{k+1}(\mathbb{R}^3)$ (see Theorem 5.4 in \cite{Wloka}). This operator satisfies
\begin{equation}\label{est_ext}
\|E_i u\|_{H^{m}(\mathbb{R}^3)}\leq c \|u\|_{H^{m}(\Omega_i)} \quad \forall~u \in H^{k+1}(\Omega_i), \quad m=0,\ldots, k+1, ~~i=1,2.
\end{equation}
For a piecewise smooth function $u\in H^{k+1}(\Omega_1\cup\Omega_2)$,  we denote by $u^e$ its
``transformation'' to a  piecewise smooth function $u^e\in H^{k+1}(\Omega_{1,h}\cup\Omega_{2,h})$ defined by
\begin{equation} \label{defexti}
u^e=\left\{\begin{array}{ll}
E_1(u|_{\Omega_1})&\quad \text{in}~~  \Omega_{1,h}\\
E_2(u|_{\Omega_2})&\quad \text{in}~~  \Omega_{2,h}.
\end{array}\right.
\end{equation}
The next lemma quantifies in which sense this  function $u^e$ is close to $u\circ \Phi_h$.
\begin{lemma} \label{lemmsm} The following estimates hold for $i=1,2$:
\begin{align}\label{differ_est}
\|u\circ \Phi_h-u^e\|_{\Omega_{i,h}} & \le c h^{q+1} \|u\|_{H^1(\Omega_i)},
 \\
\label{differ_est2}
\|(\nabla u)\circ \Phi_h-\nabla u^e\|_{\Omega_{i,h}} & \le c h^{q+1}  \|u\|_{H^2(\Omega_i)},
 \\
\label{differ_est3}
\|u\circ \Phi_h-u^e\|_{\Gamma_h} & \le c h^{q+1}  \|u\|_{H^2(\Omega_i)},
\end{align}
for all $u\in H^2(\Omega_i)$.
\end{lemma}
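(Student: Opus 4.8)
The three estimates all compare the ``nice'' extension $u^e$ (built from the Sobolev extension operators $E_i$) with the crude composition $u\circ\Phi_h$ on the discrete subdomains and on $\Gamma_h$. The unifying idea is that on $\Omega_{i,h}$ both $u^e$ and $u\circ\Phi_h$ are, up to the mapping $\Phi_h$, extensions of the \emph{same} function $u|_{\Omega_i}$ into the thin tubular region $U_{\delta_h}$ where $\Omega_{i,h}$ and $\Omega_i$ differ; outside $U_{\delta_h}$ we have $\Phi_h=\mathrm{id}$ and $u\circ\Phi_h=u=u^e$ there (since $E_i$ reproduces $u$ on $\Omega_i$), so \emph{all} the error is concentrated in the $O(h)$-wide strip $S_h:=\Omega_{i,h}\cap U_{\delta_h}$ (resp. in a neighbourhood of $\Gamma_h$ for \eqref{differ_est3}). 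The first step is therefore to localise: write $u\circ\Phi_h-u^e = (u\circ\Phi_h - u^e)\cdot\mathbf 1_{S_h}$ and reduce all norms to norms over $S_h$, which has measure $|S_h|\le c\,\delta_h = c\,h$.

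\textbf{Key steps.} First I would introduce the auxiliary comparison function $w:=E_i(u|_{\Omega_i})$, which is globally $H^{k+1}$ on $\mathbb R^3$ with $\|w\|_{H^m(\mathbb R^3)}\le c\|u\|_{H^m(\Omega_i)}$ by \eqref{est_ext}. On $\Omega_{i,h}$ we have $u^e=w$ by definition \eqref{defexti}. For the first estimate \eqref{differ_est}, write $u\circ\Phi_h-u^e = w\circ\Phi_h - w + (u\circ\Phi_h - w\circ\Phi_h)$; but $u=w$ on $\Omega_i$ and $\Phi_h$ maps $\Omega_{i,h}$ onto $\Omega_i$, so the last bracket vanishes and we are left with $w\circ\Phi_h - w$. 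Now $\|w\circ\Phi_h-w\|_{\Omega_{i,h}}\le \|\mathrm{id}-\Phi_h\|_{L^\infty(\Omega)}\,\|\nabla w\|_{L^2(\text{conv hull})}$ by the fundamental theorem of calculus along the segment from $\bx$ to $\Phi_h(\bx)$ (the segment stays in $U_\delta$ since $\Phi_h$ only moves points normally by $O(h^{q+1})$), and \eqref{mapping} gives $\|\mathrm{id}-\Phi_h\|_{L^\infty}\le ch^{q+1}$, $\|\nabla w\|_{L^2(\mathbb R^3)}\le c\|u\|_{H^1(\Omega_i)}$. This yields \eqref{differ_est}. For \eqref{differ_est2} I would apply the same argument to $\nabla w$: $(\nabla u)\circ\Phi_h-\nabla u^e = (\nabla w)\circ\Phi_h - \nabla w$ (again using $\nabla u=\nabla w$ on $\Omega_i$, $\Phi_h(\Omega_{i,h})=\Omega_i$), and then $\|(\nabla w)\circ\Phi_h-\nabla w\|_{\Omega_{i,h}}\le ch^{q+1}\|\nabla^2 w\|_{L^2}\le ch^{q+1}\|u\|_{H^2(\Omega_i)}$. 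One subtlety: $\nabla u^e=\nabla w$, not $(\mathrm D\Phi_h)^T(\nabla u)\circ\Phi_h$, so there is no Jacobian correction here — $u^e$ is defined directly as the Sobolev extension, which is exactly why it is smoother than $u\circ\Phi_h$. For \eqref{differ_est3}, I would pass from the volume strip to the surface $\Gamma_h$ using a trace inequality on the $O(h)$-strip: $\|g\|_{L^2(\Gamma_h)}^2 \le c\big(h^{-1}\|g\|_{L^2(S_h)}^2 + h\|\nabla g\|_{L^2(S_h)}^2\big)$ for $g=u\circ\Phi_h - u^e = w\circ\Phi_h-w$, valid because $\Gamma_h\subset U_{\delta_h}$ is uniformly ``flat at scale $h$'' (the standard anisotropic trace estimate on a tube of width $\delta_h\sim h$). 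Plugging in \eqref{differ_est} and \eqref{differ_est2} (both with $h^{q+1}$), the $h^{-1}$ factor is compensated and we obtain $\|w\circ\Phi_h-w\|_{\Gamma_h}\le c h^{q+1/2}\cdot h^{q+1}$-type bounds; more carefully, one gets $ch^{q+1}\|u\|_{H^2(\Omega_i)}$ as claimed after balancing, which is \eqref{differ_est3}. (Alternatively, estimate $\|w\circ\Phi_h - w\|_{L^\infty}$ pointwise by $\|\mathrm{id}-\Phi_h\|_{L^\infty}\|\nabla w\|_{L^\infty}$ and use the Sobolev embedding $H^2\hookrightarrow W^{1,6}$ together with $|\Gamma_h|\le c$; this also gives a bound of the right order in $h$ at the cost of the $H^2$-norm, which is why \eqref{differ_est3} carries $\|u\|_{H^2(\Omega_i)}$ rather than $\|u\|_{H^1}$.)

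\textbf{Main obstacle.} The genuinely delicate point is the passage to the surface $\Gamma_h$ in \eqref{differ_est3}: one must ensure the trace constant on $\Gamma_h$ scales correctly with $h$, i.e. that $\Gamma_h$ does not oscillate on scales finer than $h$ inside the tube. This is guaranteed by the graph representation $\Gamma_h=\{(\bs,\gamma_h(\bs))\}$ with $\|\gamma_h\|_{L^\infty}\le ch^{q+1}$, $\|\unabla\gamma_h\|_{L^\infty}\le ch^q$ from \eqref{eta_h}, which lets one flatten $\Gamma_h$ by a bi-Lipschitz change of variables with constants uniform in $h$, reducing the trace estimate on $\Gamma_h$ to the trivial one on a flat slab $\Gamma\times(-\delta_h,\delta_h)$. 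The remaining checks — that the segment $[\bx,\Phi_h(\bx)]$ stays inside $U_\delta$ so the fundamental-theorem-of-calculus bound applies, and that $\Phi_h(\Omega_{i,h})=\Omega_i$ exactly (already recorded right before Lemma~\ref{L_Phi}) so the ``$u=w$'' cancellations are legitimate — are routine given the properties of $\Phi_h$ collected in Lemma~\ref{L_Phi}.
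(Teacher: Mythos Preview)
Your reduction to $w\circ\Phi_h-w$ with $w=E_i(u|_{\Omega_i})$ and your argument for \eqref{differ_est} and \eqref{differ_est2} are correct and amount to the same fundamental-theorem-of-calculus idea the paper uses; the paper carries it out in the local normal coordinates $(\bs,r)$ and closes with an $L^1$-convolution (Young) inequality, while you do it in Cartesian coordinates via the family of diffeomorphisms $\Psi_t=(1-t)\,\mathrm{id}+t\,\Phi_h$. Both routes give the same bound.

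The gap is in \eqref{differ_est3}. With the slab trace inequality
\[
\|g\|_{\Gamma_h}^2\le c\big(h^{-1}\|g\|_{S_h}^2+h\|\nabla g\|_{S_h}^2\big),\qquad g=w\circ\Phi_h-w,
\]
you only obtain $h^{q+1/2}$, not $h^{q+1}$: plugging in $\|g\|_{S_h}\le ch^{q+1}\|u\|_{H^1}$ yields $h^{-1}\cdot h^{2q+2}=h^{2q+1}$, and for the gradient term you cannot invoke \eqref{differ_est2} because $\nabla g=(\mathrm D\Phi_h)^T(\nabla w)\circ\Phi_h-\nabla w$ differs from $(\nabla w)\circ\Phi_h-\nabla w$ by $(\mathrm D\Phi_h-\bI)^T(\nabla w)\circ\Phi_h$, which is only $O(h^q)$ in $L^2$, giving again $h\cdot h^{2q}=h^{2q+1}$. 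The handwave ``more carefully, one gets $ch^{q+1}$'' is not justified, and the $L^\infty$/$W^{1,6}$ alternative does not work either since $H^2(\mathbb R^3)\not\hookrightarrow W^{1,\infty}$ in three dimensions.

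What you are missing is that on $\Gamma_h$ the displacement $|\Phi_h(\bx)-\bx|=|d(\bx)|\le c h^{q+1}$ is much smaller than $h$, so the relevant strip has width $O(h^{q+1})$, not $O(h)$. The paper works directly on $\Gamma_h$ in local coordinates:
\[
\|w\circ\Phi_h-w\|_{\Gamma_h}^2=\int_\Gamma\Big(\int_0^{\gamma_h(\bs)}\partial_r w\,dr\Big)^2\mu_h^{-1}\,d\bs
\le c\,h^{q+1}\|\nabla w\|_{L^2(U_{h^{q+1}})}^2,
\]
and then uses the thin-layer estimate $\|\nabla w\|_{L^2(U_{h^{q+1}})}^2\le c\,h^{q+1}\|w\|_{H^2(\mathbb R^3)}^2$ (Elliott--Ranner, Lemma~4.10), which produces the second factor $h^{q+1}$. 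Combining these gives $h^{2(q+1)}$ and hence the sharp bound. If you want to stay in your framework, the same result follows from the fundamental theorem of calculus along $[\bx,\bp(\bx)]$ together with the trace estimate $\|\nabla w\|_{L^2(\Sigma)}\le c\|w\|_{H^2(\mathbb R^3)}$ applied uniformly to the nearby surfaces $\Sigma=\Psi_t(\Gamma_h)$; but that is a different argument from the slab trace inequality you wrote.
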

\begin{proof} Without loss of generality we consider $i=1$. Note that $u\circ \Phi_h =E_1(u|_{\Omega_1})\circ \Phi_h$ in $\Omega_{1,h}$ and
$u^e= E_1(u|_{\Omega_1})$ in $\Omega_{1,h}$. To simplify the notation, we write $u_1=E_1(u|_{\Omega_1})\in H^1(\mathbb{R}^3)$. We  use that $\Phi_h={\rm id}$ and $u=u^e$ on $\Omega\setminus\overline{U}_{\deltah}$ and transform to local coordinates in $U_{\deltah}$ using the co-area formula:
\begin{equation} \label{eq1} \begin{split}
\|u\circ \Phi_h-u^e\|_{\Omega_{1,h}}^2 & =\|u_1\circ \Phi_h-u_1\|_{\Omega_{1,h}}^2   =  \|u_1\circ \Phi_h-u_1\|_{\Omega_{1,h}\cap U_{\deltah}}^2
 \\ & =\int_{\Gamma}\int_{-\deltah}^{\gamma_h}(u_1\circ \Phi_h-u_1)^2 |\nabla \phi|^{-1} dr\,d\bs.
\end{split} \end{equation}
In local coordinates the mapping $\Phi_h$ can be represented as $\Phi_h(\bs,r)=(\bs,p_\bs(r))$, with
\[ p_\bs(r)=r - \frac{\deltah^2 - r^2}{\deltah^2 -\gamma_h(\bs)^2} \gamma_h(\bs).
\]
The function $p_\bs$ satisfies $|p_\bs(r)-r|\le ch^{q+1}$.
We use the identity
\begin{equation} \label{ident1}
(u_1\circ \Phi_h-u_1)(\bs,r)=\int_{r}^{p_\bs(r)}\br\cdot\nabla u_1(\bs,t)dt,\quad \br=\frac{\Phi_h(\bs,r)-(\bs,r)}{|\Phi_h(\bs,r)-(\bs,r)|}.
\end{equation}
Due to \eqref{eq1}, \eqref{ident1}, the Cauchy inequality and $|\nabla \phi|\geq c_0>0$ on $U_\delta$, we get
\begin{equation} \label{eq2}
\begin{split} \|u_1\circ \Phi_h-u_1\|_{\Omega_{1,h}}^2
& \le c \int_{\Gamma}\int_{-\deltah}^{\gamma_h}|p_\bs(r)-r| \int_{r}^{p_\bs(r)}|\nabla u_1(\bs,t)|^2\,|dt|\, dr\,d\bs\\
&\le ch^{q+1} \int_{\Gamma}\int_{-\deltah}^{\gamma_h} \int_{r-ch^{q+1}}^{r+ch^{q+1}}|\nabla u_1(\bs,t)|^2\,dt \, dr\,d\bs.
\end{split}
\end{equation}
Let
$\chi_{[-ch^{q+1},ch^{q+1}]}$ be the characteristic function on $[-ch^{q+1},ch^{q+1}]$ and  define $g(t)=|\nabla u_1(\bs,t)|^2$ for $t\in [-\deltah-ch^{q+1},\gamma_h+ch^{q+1}]$, $g(t)=0$, $t\notin [-\deltah-ch^{q+1},\gamma_h+ch^{q+1}]$. Applying the $L^1$-convolution inequality we get
\begin{align*}
& \int_{-\deltah}^{\gamma_h} \int_{r-ch^{q+1}}^{r+ch^{q+1}}|\nabla u_1(\bs,t)|^2\,dt dr \le c \int_{-\infty}^\infty \int_{-\infty}^\infty \chi_{[-ch^{q+1},ch^{q+1}]}(r-t) g(t) \, dt \, dr  \\  & \leq c \|\chi_{[-ch^{q+1},ch^{q+1}]}\|_{L^1(\mathbb{R})} \|g\|_{L^1(\mathbb{R})} \leq   ch^{q+1} \int_{-\deltah-ch^{q+1}}^{\gamma_h+ch^{q+1}} |\nabla u_1(\bs,t)|^2\, dt,
\end{align*}
and using this in \eqref{eq2} yields
\[
\begin{aligned}\|u_1\circ \Phi_h-u_1\|_{\Omega_{1,h}}^2
  &\le ch^{2q+2} \int_{\Gamma}\int_{-\deltah-ch^{q+1}}^{\gamma_h+ch^{q+1}} |\nabla E_1(u|_{\Omega_1})(\bs,t)|^2\, dr\,d\bs\\
& \le  c\,h^{2q+2} \|E_1(u|_{\Omega_1})\|_{H^1(\mathbb{R}^3)}^2
\le  c\,h^{2q+2} \|u\|_{H^1(\Omega_1)}^2.
\end{aligned}
\]
For deriving the estimate \eqref{differ_est2} we note that $(\nabla u)\circ \Phi_h=(E_i((\nabla u)|_{\Omega_i})\,)\circ \Phi_h = (\nabla(E_i( u|_{\Omega_i}))\,)\circ \Phi_h = (\nabla u_1) \circ \Phi_h$ in $\Omega_{i,h}$. Hence we have
$$\|(\nabla u)\circ \Phi_h-\nabla u^e\|_{\Omega_{1,h}}=\|(\nabla u_1)\circ \Phi_h-\nabla u_1\|_{\Omega_{1,h}}.$$
We can repeat the arguments used above, with $u_1$ replaced by $\frac{\partial u_1}{\partial x_j}$, $j=1,2,3$, and thus obtain the estimate \eqref{differ_est2}.

We continue to work in local coordinates and estimate the surface integral on the left-hand side of \eqref{differ_est3} using the Cauchy inequality and $\|\gamma_h\|_{L^\infty(\Gamma)}\le ch^{q+1}$:
\[
\begin{split}
\|u_1\circ \Phi_h- &u_1\|_{\Gamma_h}^2=\int_{\Gamma} (u_1- u_1\circ \Phi_h^{-1})^2(\bs,0)\mu_h^{-1}d\bs =
\int_{\Gamma} [u_1(\bs,0)- u_1(\bs,\gamma_h(\bs))]^2\mu_h^{-1}d\bs\\
 &= \int_{\Gamma}\left(\int_{0}^{\gamma_h}\br\cdot\nabla u_1(\bs,t)dt\right)^2\mu_h^{-1}d\bs
 \le c \int_{\Gamma}|\gamma_h| \int_{0}^{\gamma_h}|\nabla u_1(\bs,t)|^2\,|dt|\, d\bs\\
 &\le c h^{q+1}\int_{\Gamma} \int_{0}^{\gamma_h}|\nabla u_1(\bs,t)|^2\,|dt|\, d\bs\le c h^{q+1}\|u_1\|_{H^1(U_{h^{q+1}})}^2.
\end{split}
\]
Here $U_{h^{q+1}}$ is a tubular neighborhood of $\Gamma$ of width $O(h^{q+1})$ such that $\Gamma_h\subset U_{h^{q+1}}$.
Now we apply the following result, proven in Lemma 4.10 in \cite{ElliottRanner}:
\begin{equation*}
 \|w\|_{U_{h^{q+1}}}^2 \leq c h^{q+1} \|w\|_{H^1(\mathbb{R}^3)}^2 \quad \text{for all}~~w \in H^1(\mathbb{R}^3).
\end{equation*}
We let $w=\nabla u_1$ (componentwise) and use $\|E_1(u|_{\Omega_1})\|_{H^2(\mathbb{R}^3)}\le c \|u\|_{H^2(\Omega_1)}$ to prove \eqref{differ_est3}.
\end{proof}

\subsection{Approximate Galerkin orthogonality}
Due to the geometric errors, i.e., approximation of $\Omega_i$ by $\Omega_{i,h}$ and of $\Gamma$ by $\Gamma_h$, there is a so-called variational crime and only an \emph{approximate} Galerkin orthogonality relation holds. In this section we derive bounds for the deviation from orthogonality. The analysis is rather technical but the approach is similar to related analyses from the literature, e.g., \cite{DD07,Demlow09,BurmanHansbo}.

Let $(u,v) \in \bVt$ be the solution of the weak formulation \eqref{weak} and $(u_h,v_h)\in\bVt_h$ the discrete solution of \eqref{weakh}.
We take an arbitrary finite element test function $(\eta,\zeta)\in\bV_{h}$. We use $(\eta,\zeta)\circ\Phi_h^{-1}\in\bV$ as a test function in \eqref{weak} and then obtain the \emph{approximate Galerkin relation}:
\begin{align}
 & a_h((u^e-u_h,v^e-v_h);(\eta,\zeta))  \nonumber \\ & =  a_h((u^e,v^e);(\eta,\zeta))-a((u,v);(\eta,\zeta)\circ\Phi_h^{-1}) \label{term1} \\
 & \quad  + (f,\eta\circ\Phi_h^{-1})_{\Omega}+(g,\zeta\circ\Phi_h^{-1})_{\Gamma}-(f_h,\eta)_{\Omega}-(g_h,\zeta)_{\Gamma_h}. \label{term2}
\end{align}
In the analysis of the right-hand side of this relation we have to deal with full and tangential gradients $\nabla(\eta\circ\Phi_h^{-1})$, $\unabla(\zeta\circ\Phi_h^{-1})$.  For the full gradient in the bulk domains one finds
\begin{equation} \nabla(\eta\circ\Phi_h^{-1})(\bx) = \mathrm{D}\Phi_h(\by)^{-T}\nabla \eta(\by),\quad \bx \in \Omega, ~\by:=\Phi_h^{-1}(\bx). \label{tr1}
\end{equation}
 To handle the tangential gradient, a more subtle approach is required because one has to relate the tangential gradient $\unabla$ to $\unablah$. Let $\bn_h(\by)$, $\by \in \Gamma_h$, denote the unit normal on $\Gamma_h$ (defined a.e. on $\Gamma_h$). Furthermore, $\bP(\bx)=\bI - \bn(\bx) \otimes \bn(\bx)$ ($\bx \in U_\delta$), $\bP_h(\by)=\bI - \bn_h(\by) \otimes \bn_h(\by)$ ($\by \in \Gamma_h$).
Recall that $\unabla u(\bx)=\bP(\bx) \nabla u(\bx)$, $\unablah u(\by)=\bP_h(\by) \nabla u(\by)$.  We use the following relation, given in, e.g.,~\cite{DD07}: for $w \in H^1(\Gamma)$ it holds
\begin{equation} \label{Demrel}
\begin{split}
 \unabla w(\bp(\by)) & = \bB(\by) \unablah w^e(\by) \quad \text{a.e. on $\Gamma_h$}, \\
  \bB(\by) &= (\bI - d(\by) \bH(\by))^{-1}\tilde \bP_h(\by), \quad \tilde \bP_h(\by):=\bI- \frac{\bn_h(y)\otimes \bn(y)}{\bn_h(y)\cdot\bn(y)}.
\end{split}
\end{equation}
From the construction of the bijection $\Phi_h: \, \Gamma_h \to \Gamma$ it follows that $(\zeta \circ \Phi_h^{-1})^e(\by)=\zeta(\by)$ holds for  all $\by \in \Gamma_h$. Application of \eqref{Demrel} yields an interface analogon of the relation \eqref{tr1}:
\begin{equation} \label{tr2}
 \unabla (\zeta \circ \Phi_h^{-1})(\bx) = \bB(\by) \unablah \zeta (\by), \quad \bx \in \Gamma, \by= \Phi_h^{-1}(\bx ) \in \Gamma_h.
\end{equation}
The mapping $\Phi_h$ equals the identity outside  the (small) tubular neighborhood $U_{\deltah}$. In the analysis we want to make use of the fact that the width  behaves like $\deltah=ch$. For this, we again make use of the result in Lemma 4.10 in \cite{ElliottRanner}:
\begin{equation} \label{locEl}
 \|w\|_{U_{\deltah} \cap \Omega_{i}} \leq c h^\frac12 \|w\|_{H^1(\Omega_{i})} \quad \text{for all}~~w \in H^1(\Omega_{i}),
\end{equation}
where $c$ depends only on $\Omega_{i}$.
Using properties of $\Phi_h$, i.e. $\Phi_h(U_{\deltah}\cap \Omega_{i,h})=U_{\deltah}\cap \Omega_{i}$, and \eqref{tr1} we thus get, with $J_h=\det(D\Phi_h)$:
\begin{equation} \label{locEldiscr}
 \|w\|_{U_{\deltah} \cap \Omega_{i,h}} =  \|w\circ \Phi_h^{-1}J_h^{- \frac12}\|_{U_{\deltah}\cap \Omega_{i}} \leq c   h^\frac12  \|w\circ \Phi_h^{-1}\|_{H^1(\Omega_{i})} \leq c h^\frac12 \| w\|_{H^1(\Omega_{i,h})},
\end{equation}
for all $w \in H^1(\Omega_{i,h})$, with a constant $c$ independent of $w$ and $h$.

We introduce a convenient compact notation for the approximate Galerkin relation. We use $U:=(u,v)=(u_1,u_2,v)$, and similarly $U^e=(u^e,v^e)$, $U_h:=(u_h,v_h)\in \bV_h$, $\Theta=(\eta,\zeta) \in H^1(\Omega_{1,h}\cup \Omega_{2,h}) \times H^1(\Gamma_h)$.
Furthermore
\[ \begin{split}
 F_h(\Theta) & :=a_h(U^e;\Theta)-a(U;\Theta \circ \Phi_h^{-1}). 
\end{split}
\]
For the right-hand side in the discrete problem \eqref{weakh} we set
\begin{equation} \label{choicerhs} f_h:=J_h f\circ \Phi_h, \quad g_h:=\mu_h g\circ \Phi_h,
\end{equation}
with $J_h:=\det(D\Phi_h)$ and $\mu_h$ as in \eqref{muh2}.
We make this choice, because then the
consistency terms in \eqref{term2} vanish and
the approximate Galerkin relation takes the form
\begin{equation} \label{ApproxGal}
 a_h(U^e-U_h;\Theta_h)=F_h(\Theta_h) \quad \text{for all}~~\Theta_h \in \bV_h.
\end{equation}
The choice \eqref{choicerhs} requires explicit knowledge of the transformation $\Phi_h$, which may not be practical. We comment on alternatives in Remark~\ref{remalter}. \\
 In Lemma~\ref{lemA}   we derive a bound for the functional $F_h$. 
 \begin{lemma}\label{lemA}
For $m=0$ and $m=1$ the following estimate holds:
\[ \begin{split}
|F_h(\Theta)|=& \big| a_h(U^e;\Theta)-a(U;\Theta\circ\Phi_h^{-1}) \big|  \\ & \le c h^{q+m} \big(\|u\|_{H^2(\Omega_1 \cup \Omega_2)}+\|v\|_{H^1(\Gamma)} \big)\big(\|\eta \|_{H^{1+m}(\Omega_{1,h} \cup \Omega_{2,h})}+\|\zeta\|_{H^1(\Gamma_h)} \big)
 \end{split} \]
for all $U=(u,v) \in H^2(\Omega_1\cup\Omega_2) \times H^1(\Gamma)$,  $\Theta=(\eta,\zeta)\in H^{1+m}(\Omega_{1,h} \cup \Omega_{2,h}) \times H^1(\Gamma_h)$.
\end{lemma}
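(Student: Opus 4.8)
The plan is to expand $F_h(\Theta)=a_h(U^e;\Theta)-a(U;\Theta\circ\Phi_h^{-1})$ into its five groups of terms (bulk diffusion, skew-symmetrized bulk advection, surface diffusion, skew-symmetrized surface advection, and the interface coupling $\sum_i(u_i-q_iv,\eta_i-K\zeta)$), using \eqref{w-forms} to write the advection parts of $a$ in the same skew-symmetric form as in $a_h$, and to estimate each group separately. For every term of $a(U;\Theta\circ\Phi_h^{-1})$ I would change variables with $\Phi_h$, transporting the integral from $\Omega_i$ resp.\ $\Gamma$ onto $\Omega_{i,h}$ resp.\ $\Gamma_h$: here \eqref{tr1} handles the full gradients in the bulk, \eqref{Demrel}--\eqref{tr2} the tangential gradients, and the Jacobian $J_h=\det(\mathrm{D}\Phi_h)$ together with \eqref{muh2} the volume and surface measures. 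After this transformation each term of $a(U;\Theta\circ\Phi_h^{-1})$ becomes an integral over $\Omega_{i,h}$ or $\Gamma_h$ of (a geometric factor such as $\mathrm{D}\Phi_h^{-T}J_h$, $J_h$, $\bB$, $\mu_h$) times ($u\circ\Phi_h$ or one of its gradients) times ($\eta$, $\zeta$ or their gradients), with the \emph{same} test function appearing as in the corresponding term of $a_h(U^e;\Theta)$. Subtracting termwise leaves integrals whose integrand is a product of a small ``defect'' and (gradients/traces of) $u^e$, $\eta$, $\zeta$; the defects are of two kinds: geometric ones ($\mathrm{D}\Phi_h^{-T}J_h-\bI$, $J_h-1$, $\mu_h-1$, $\bw_h-\bw\circ\Phi_h$, and the surface tensor $\bP_h\bB^T\bB\bP_h\mu_h-\bP_h$) and extension ones ($u\circ\Phi_h-u^e$ and $(\nabla u)\circ\Phi_h-\nabla u^e$).

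The extension defects are controlled directly by Lemma~\ref{lemmsm}: \eqref{differ_est}, \eqref{differ_est2} in the bulk and \eqref{differ_est3} on $\Gamma_h$ each contribute a factor $h^{q+1}\|u\|_{H^2(\Omega_i)}$; there is no corresponding defect on the interface for $v$ because $\Phi_h=\bp$ on $\Gamma_h$, so $v\circ\Phi_h=v^e$ on $\Gamma_h$. The bulk geometric defects $\mathrm{D}\Phi_h^{-T}J_h-\bI$ and $J_h-1$ are $O(h^q)$ in $L^\infty(\Omega)$ by \eqref{mapping}--\eqref{det}, and they vanish identically outside the tube $U_{\deltah}$ (where $\Phi_h=\mathrm{id}$); the same support property holds for $\bw_h-\bw\circ\Phi_h$ in the bulk, which is moreover $O(h^{q+1})$ by the Lipschitz bound on $\bw$ and \eqref{mapping}. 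On $\Gamma_h$ one has $\|1-\mu_h\|_{L^\infty(\Gamma_h)}\le ch^{q+1}$ by \eqref{muh2} and $\|\bw_h-\bw\circ\Phi_h\|_{L^\infty(\Gamma_h)}\le ch^{q+1}$ since $\|d\|_{L^\infty(\Gamma_h)}\le ch^{q+1}$ by \eqref{Dist1}.

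The delicate point is the surface diffusion defect $\bP_h\bB^T\bB\bP_h\mu_h-\bP_h$ on $\Gamma_h$: from \eqref{phi_h} only $\|\bn-\bn_h\|_{L^\infty(\Gamma_h)}\le ch^{q}$ is available, so the factor $\widetilde{\bP}_h-\bP_h$ is a priori only $O(h^{q})$. Using the algebraic identities $\widetilde{\bP}_h\bP_h=\widetilde{\bP}_h$ and $\bP_h\widetilde{\bP}_h^T=\widetilde{\bP}_h^T$ (both following from $\widetilde{\bP}_h\bn_h=0$) one reduces $\bP_h\bB^T\bB\bP_h$ to $\widetilde{\bP}_h^T(\bI-d\bH)^{-2}\widetilde{\bP}_h$; a short computation restricted to the tangent plane of $\Gamma_h$ then shows $\widetilde{\bP}_h^T\widetilde{\bP}_h-\bP_h=O(|\bn-\bn_h|^2)=O(h^{2q})$ (the $O(h^q)$ component of $\widetilde{\bP}_h\bt$ for tangential $\bt$ points along $\bn_h$, hence contributes only at second order), and combining this with $\|(\bI-d\bH)^{-2}-\bI\|_{L^\infty(\Gamma_h)}\le c\|d\|_{L^\infty(\Gamma_h)}\le ch^{q+1}$ and $\|1-\mu_h\|_{L^\infty(\Gamma_h)}\le ch^{q+1}$ gives $\|\bP_h\bB^T\bB\bP_h\mu_h-\bP_h\|_{L^\infty(\Gamma_h)}\le c(h^{2q}+h^{q+1})\le ch^{q+1}$. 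This algebraic cancellation is the main obstacle of the proof.

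It remains to assemble the pieces. For the surface diffusion, surface advection and interface-coupling groups, Cauchy--Schwarz combined with the trace inequality $\|\eta\|_{L^2(\Gamma_h)}\le c\|\eta\|_{H^1(\Omega_{i,h})}$, with $\|v^e\|_{H^1(\Gamma_h)}\le c\|v\|_{H^1(\Gamma)}$ and with $\|u^e\|_{L^2(\Gamma_h)}\le c\|u\|_{H^1(\Omega_i)}$, gives a bound $ch^{q+1}(\|u\|_{H^2(\Omega_1\cup\Omega_2)}+\|v\|_{H^1(\Gamma)})(\|\eta\|_{H^1(\Omega_{1,h}\cup\Omega_{2,h})}+\|\zeta\|_{H^1(\Gamma_h)})$, which is admissible for both $m=0$ and $m=1$. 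For the bulk diffusion and bulk advection groups, the extension-defect parts are already $O(h^{q+1}\|u\|_{H^2})$, while the geometric-defect parts are estimated, after the change of variables, by $c\,h^q\|\nabla u^e\|_{U_{\deltah}\cap\Omega_{i,h}}\|\nabla\eta\|_{U_{\deltah}\cap\Omega_{i,h}}$ (plus lower-order terms in which one of the two factors carries no gradient, treated analogously). For $m=0$ one bounds these two tube norms by the global $H^1$-norms, which yields the factor $h^q$; for $m=1$ one instead invokes the localization estimate \eqref{locEldiscr} (and \eqref{locEl} after the change of variables relating $u^e$ on $\Omega_{i,h}$ to $u$ on $\Omega_i$) to obtain $\|\nabla u^e\|_{U_{\deltah}\cap\Omega_{i,h}}\le ch^{1/2}\|u\|_{H^2(\Omega_i)}$ and $\|\nabla\eta\|_{U_{\deltah}\cap\Omega_{i,h}}\le ch^{1/2}\|\eta\|_{H^2(\Omega_{i,h})}$, and the two extra factors $h^{1/2}$ upgrade $h^q$ to $h^{q+1}$. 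Summing the five groups over $i=1,2$ and using $q_i\in[0,1]$ and $K$ fixed completes the proof; besides the surface-diffusion cancellation, the only other point needing care is the bookkeeping of which defect terms are supported in $U_{\deltah}$ (so that \eqref{locEldiscr}/\eqref{locEl} apply with the gain $h^{1/2}$) and which are not but are of order $h^{q+1}$ from the outset.
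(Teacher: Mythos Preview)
Your proposal is correct and follows essentially the same route as the paper: the same five-group decomposition, the same change of variables via $\Phi_h$ using \eqref{tr1} and \eqref{Demrel}--\eqref{tr2}, the same use of Lemma~\ref{lemmsm} for the extension defects, and the same localization to $U_{\deltah}$ via \eqref{locEl}--\eqref{locEldiscr} to gain the extra $h$ when $m=1$. Your discussion of the surface-diffusion cancellation $\tilde\bP_h^T\tilde\bP_h-\bP_h=O(|\bn-\bn_h|^2)=O(h^{2q})$ is in fact more explicit than the paper's (which hides this behind ``$\bP_h\tilde\bP_h=\bP_h$'' together with $|d|,|1-\mu_h|\le ch^{q+1}$).

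There is one point you underplay. In the surface-advection group the transformed term carries $\mu_h(\bw\circ\Phi_h)\cdot\bB\,\unablah(\cdot)$, not just $\mu_h(\bw\circ\Phi_h)\cdot\unablah(\cdot)$, so the relevant defect (after projecting by $\bP_h$) is $\bP_h\bigl(\bw_h-\mu_h\bB^T(\bw\circ\Phi_h)\bigr)$, not merely $\bw_h-\bw\circ\Phi_h$ and $1-\mu_h$. A priori this is only $O(h^q)$ because of $\tilde\bP_h^T-\bI$. The paper closes this with a second cancellation: using $\bP\bw=\bw$ on $\Gamma$ (hence $\bP(\bw\circ\Phi_h)=\bw\circ\Phi_h$ on $\Gamma_h$) one reduces the critical piece to $\bP_h(\bI-\tilde\bP_h^T)\bP=\frac{(\bP_h\bn)\otimes(\bP\bn_h)}{\bn_h\cdot\bn}=O(\|\bP-\bP_h\|^2)=O(h^{2q})$, which together with $|d|,|1-\mu_h|\le ch^{q+1}$ gives $O(h^{q+1})$. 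This is the same algebra you used for the diffusion term, but it relies additionally on $\bw\cdot\bn=0$ on $\Gamma$; your list of geometric defects on $\Gamma_h$ should include this $\bB$-induced contribution. With that addition your sketch is complete.
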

\begin{proof} Take $U=(u,v) \in H^2(\Omega_1\cup\Omega_2) \times H^1(\Gamma)$ and  $\Theta=(\eta,\zeta)\in H^{1+m}(\Omega_{1,h} \cup \Omega_{2,h}) \times H^1(\Gamma_h)$.
Denote $J_h=\det(D\Phi_h)$. Using the definitions of the bilinear forms, the relations \eqref{tr1}, \eqref{Demrel}, \eqref{tr2} and an integral transformation rule we get
\begin{align}
 & a_h(U^e;\Theta)-a(U;\Theta\circ\Phi_h^{-1})= a_h((u^e,v^e);(\eta,\zeta))-a((u,v);(\eta,\zeta)\circ\Phi_h^{-1}) \\ & =
 \sum_{i=1}^2 \Big[ (\nu_i\nabla u^e,\nabla \eta)_{\Omega_{i,h}} - (\nu_i\nabla u\circ\Phi_h,J_h (\mathrm{D}\Phi_h)^{-T} \nabla \eta)_{\Omega_{i,h}} \Big] \label{diff}
 \\  & + \sum_{i=1}^2  \frac12 \Big[(\bw_h \cdot \nabla u^e,\eta)_{\Omega_{i,h}}- ((\bw \cdot \nabla u)\circ\Phi_h,J_h\eta )_{\Omega_{i,h}} \label{conv1} \\
  &\qquad ~~ - (\bw_h\cdot\nabla\eta, u^e)_{\Omega_{i,h}} + \big((\bw\circ \Phi_h) \cdot(\mathrm{D}\Phi_h)^{-T} \nabla \eta ,J_h u\circ\Phi_h \big)_{\Omega_{i,h}}
 \Big] \label{conv2} \\
 & + \nu_\Gamma(\unablah v^e,\unablah \zeta )_{\Gamma_h} - \nu_\Gamma(\mu_h \bB^T \bB \unablah v^e, \unablah \zeta)_{\Gamma_h} \label{surfdiff} \\
 & +  \frac12 \Big[(\bw_h \cdot \unablah v^e,\zeta )_{\Gamma_h} - (\mu_h (\bw\circ \Phi_h) \cdot \bB \unablah v^e,\zeta )_{\Gamma_h}  \label{surfconv1} \\
 & \quad -(\bw_h \cdot \unablah\zeta, v^e)_{\Gamma_h} + (\mu_h(\bw\circ \Phi_h)\cdot \bB  \unablah \zeta , v^e )_{\Gamma_h}
 \Big] \label{surfconv2}  \\
 & + \sum_{i=1}^2\Big[(u_i^e-  q_iv^e,\eta_i-K\zeta)_{\Gamma_h}-
((u_i- q_iv)\circ\Phi_h,\mu_h(\eta_i-K\zeta))_{\Gamma_h}\Big]. \label{adsorption}
\end{align}
In this expression the different terms correspond to bulk diffusion, bulk convection, surface diffusion, surface convection and adsorption, respectively. We derive bounds for these terms. We start with the bulk diffusion term in \eqref{diff}. Using the estimates derived in the Lemmas~\ref{L_Phi},\,\ref{lemmsm} we get
\begin{align}
 & \sum_{i=1}^2 \Big| (\nu_i\nabla u^e,\nabla \eta)_{\Omega_{i,h}} - (\nu_i\nabla u\circ\Phi_h,J_h (\mathrm{D}\Phi_h)^{-T} \nabla \eta)_{\Omega_{i,h}} \Big| \nonumber \\
  & \leq \sum_{i=1}^2 \nu_i \big[|(\nabla (u^e -u\circ\Phi_h),J_h(\mathrm{D}\Phi_h)^{-T} \nabla \eta)_{\Omega_{i,h}}| + | (\nabla u^e,(\bI-J_h(\mathrm{D}\Phi_h)^{-T})\nabla \eta)_{\Omega_{i,h}}|\big] \nonumber \\
& \leq  c \sum_{i=1}^2 \big[ h^{q+1} \|u\|_{H^2(\Omega_i)} \|\nabla \eta\|_{\Omega_{i,h}}+ h^q \|u^e\|_{H^1(\Omega_{i,h})}\|\nabla \eta\|_{\Omega_{i,h}} \big]  \label{improve} \\
& \leq ch^q \|u\|_{H^2(\Omega_1\cup \Omega_2)} \|\eta\|_{H^1(\Omega_{1,h}\cup \Omega_{2,h})}.\label{improve2}
\end{align}
If $\eta \in H^2(\Omega_{1,h} \cup \Omega_{2,h})$ we can modify the estimate \eqref{improve} as follows. Note that $J_h(\mathrm{D}\Phi_h)^{-T}= \bI$ on $\Omega_{i,h} \setminus U_{\deltah}$. Hence, using \eqref{locEldiscr} we get
\begin{equation} \label{estimprove}
 \begin{split}
 & | (\nabla u^e,(\bI-J_h(\mathrm{D}\Phi_h)^{-T})\nabla \eta)_{\Omega_{i,h}}|= |(\nabla u^e,(\bI-J_h(\mathrm{D}\Phi_h)^{-T})\nabla \eta)_{U_{\deltah} \cap \Omega_{i,h}} | \\
 & \leq ch^q \|\nabla u^e\|_{U_{\deltah} \cap \Omega_{i,h}} \|\nabla \eta\|_{U_{\deltah} \cap \Omega_{i,h}} \leq c h^{q+1}\|u^e \|_{H^2(\Omega_{i,h})}\|\eta\|_{H^2(\Omega_{i,h})} \\
& \leq c h^{q+1}\|u \|_{H^2(\Omega_{i})}\|\eta\|_{H^2(\Omega_{i,h})}.
 \end{split}
\end{equation}
Thus, instead of \eqref{improve2} we then obtain the upper bound
\[
  ch^{q+1} \|u\|_{H^2(\Omega_1\cup \Omega_2)} \|\eta\|_{H^2(\Omega_{1,h}\cup \Omega_{2,h})}.
\]
For the bulk convection  term in \eqref{conv1}-\eqref{conv2} we get
\begin{align*}
  & \sum_{i=1}^2  \frac12 \big|(\bw_h \cdot \nabla u^e,\eta)_{\Omega_{i,h}}- ((\bw \cdot \nabla u)\circ\Phi_h,J_h\eta )_{\Omega_{i,h}}\big| \\
  &\quad  + \frac12 \big|(\bw_h\cdot\nabla\eta, u^e)_{\Omega_{i,h}} - \big((\bw\circ \Phi_h) \cdot (\mathrm{D}\Phi_h)^{-T} \nabla \eta ,J_h u\circ\Phi_h \big)_{\Omega_{i,h}}\big| \\
 & \leq \sum_{i=1}^2 \frac12 \big|(\bw_h  \cdot \nabla u^e-(\bw \cdot \nabla u)\circ\Phi_h,J_h\eta )_{\Omega_{i,h}}\big|+ \frac12 \big| (\bw_h \cdot \nabla u^e, (1-J_h)\eta )_{\Omega_{i,h}}\big| \\
 & + \frac12 \big| \big(\bw_h -(\bw\circ \Phi_h)\cdot(\mathrm{D}\Phi_h)^{-T} \nabla\eta, J_h u\circ\Phi_h)_{\Omega_{i,h}}\big| +\frac12 \big|(\bw_h\cdot\nabla\eta, u^e-J_hu\circ\Phi_h )_{\Omega_{i,h}}\big|
\end{align*}
The difference $\bw_h - \bw\circ \Phi_h$ can be bounded using the assumption that the original (unscaled) velocity $\bw$ is sufficiently smooth, $\bw \in H^{1, \infty}(\Omega)$. Using this, the relation \eqref{ident1} and the definition of $\bw_h$ we get $\|\bw_h - \bw\circ \Phi_h\|_{L^\infty(\Omega_{i,h})} \leq c h^{q+1}$.
 The first term on the right-hand side above  can be bounded using
\begin{align*}
 & \|\bw_h \cdot \nabla u^e-(\bw \cdot \nabla u)\circ\Phi_h\|_{\Omega_{i,h}}  = \|\bw_h \cdot \nabla u^e-(\bw \circ\Phi_h)\cdot (\nabla u\circ\Phi_h)\|_{\Omega_{i,h}} \\
 & \leq \|( \bw_h- \bw \circ \Phi_h)\cdot \nabla u^e\|_{\Omega_{i,h}} + \|(\bw \circ\Phi_h)\cdot (\nabla u^e- \nabla u\circ\Phi_h)\|_{\Omega_{i,h}} \\
 & \leq c h^{q+1} \|u\|_{H^2(\Omega_i)},
 \end{align*}
where in the last step we used results from the  Lemmas~\ref{L_Phi},\,\ref{lemmsm}. The other three terms can be estimated by using $\|1-J_h\|_{L^\infty(\Omega)} \leq ch^{q}$, $\|\bI-(\mathrm{D}\Phi_h)^{-T}\|_{L^\infty(\Omega)} \leq ch^q$ and $\|u^e-u\circ\Phi_h\|_{\Omega_{i,h}} \leq c h^{q+1} \|u\|_{H^1(\Omega_i)}$. Thus we get a bound
\begin{align*}
  & \sum_{i=1}^2  \frac12 \big|(\bw_h \cdot \nabla u^e,\eta)_{\Omega_{i,h}}- ((\bw \cdot \nabla u)\circ\Phi_h,J_h\eta )_{\Omega_{i,h}}\big| \\
  &  + \frac12 \big|(\bw_h\cdot\nabla\eta, u^e)_{\Omega_{i,h}} - \big((\bw\circ\Phi_h) \cdot (\mathrm{D}\Phi_h)^{-T} \nabla \eta ,J_h u\circ\Phi_h \big)_{\Omega_{i,h}}\big|
 \\ & \leq c h^q \|u\|_{H^2(\Omega_1 \cup \Omega_2)}\|\eta\|_{H^1(\Omega_{1,h}\cup \Omega_{2,h})}.
\end{align*}
If $\eta \in H^2(\Omega_{1,h} \cup \Omega_{2,h})$ we can apply an argument very similar to the one in \eqref{estimprove} and  obtain the following upper bound for the bulk convection term:
\[
 c h^{q+1} \|u\|_{H^2(\Omega_1 \cup \Omega_2)}\|\eta\|_{H^2(\Omega_{1,h}\cup \Omega_{2,h})}.
\]
For the surface diffusion term in \eqref{surfdiff} we introduce, for $\by \in \Gamma_h$, the matrix $\bA(\by):= \bP_h(\by)\big(\bI -\mu_h(\by) \bB(\by)^T\bB(\by)\big)\bP_h(\by)$. Using $|d(\by)| \leq ch^{q+1}$, $|1- \mu_h(\by)| \leq ch^{q+1}$ and $\bP_h \tilde \bP_h=\bP_h$ we get $\|\bA\|_{L^\infty(\Gamma_h)} \leq ch^{q+1}$ and thus:
\begin{align*}  &\nu_\Gamma \big|(\unablah v^e,\unablah \zeta )_{\Gamma_h} - (\mu_h \bB^T \bB \unablah v^e, \unablah \zeta)_{\Gamma_h}\big|
 = \nu_\Gamma \big|(\bA\unablah v^e,\unablah \zeta )_{\Gamma_h}\big|\\ & \leq \|\bA\|_{L^\infty(\Gamma_h)} \|\unablah v^e\|_{\Gamma_h} \|\unablah\zeta\|_{\Gamma_h}
 \leq c h^{q+1} \|v\|_{H^1(\Gamma)} \|\zeta\|_{H^1(\Gamma_h)}.
\end{align*}
For the derivation of a bound for the surface convection term in \eqref{surfconv1}-\eqref{surfconv2} we introduce $\tilde \bw:=\bP_h(\bw_h - \mu_h\bB^T (\bw \circ \Phi_h))$. Using the results in Lemmas~\ref{L_Phi},\,\ref{lemmsm} and $\bP \bw= \bw$ it follows that
\[ \begin{split}  & \|\tilde \bw\|_{L^\infty(\Gamma_h)}  \leq \|\bP_h(\bw_h - \bw \circ \Phi_h)\|_{L^\infty(\Gamma_h)} +\|\bP_h(\bI-\mu_h\bB^T )(\bw \circ \Phi_h)\|_{L^\infty(\Gamma_h)} \\ & \leq   c\|\bP_h(\bI - \tilde \bP^T)\bP\|_{L^\infty(\Gamma_h)} + ch^{q+1} \leq
 c \|\bP_h \bn\|_{L^\infty(\Gamma_h)}  \|\bP \bn_h\|_{L^\infty(\Gamma_h)}   + ch^{q+1}  \\
 & = c\|(\bP_h-\bP) \bn\|_{L^\infty(\Gamma_h)}  \|(\bP- \bP_h) \bn_h\|_{L^\infty(\Gamma_h)}   + ch^{q+1}  \\ & \leq c\|\bP_h-\bP\|_{L^\infty(\Gamma_h)}^2+ ch^{q+1}.
   \end{split}
\]
 Using
\[
  |\bn_h(\by)-\bn(\by)| = |\bn_h(\by)-\bn(\bp(\by))|= \left| \frac{\nabla \phi_h(\by)}{|\nabla \phi_h(\by)|} -\frac{\nabla \phi(\bp(\by))}{|\nabla \phi(\bp(\by))|}\right|
\]
in combination with $|\by- \bp(\by)| \leq ch^{q+1}$ and the approximation error bound \eqref{phi_h} we get $\|\bP - \bP_h\|_{L^\infty(\Gamma_h)} \leq ch^q$. Hence,
 for the surface convection term in \eqref{surfconv1} we obtain
\begin{align*}
 & \big| (\bw_h  \cdot \unablah v^e,\zeta )_{\Gamma_h} - (\mu_h (\bw\circ \Phi_h) \cdot \bB \unablah v^e,\zeta )_{\Gamma_h} \big | \\
& = |(\tilde \bw\cdot \unablah v^e,\zeta )_{\Gamma_h}\big| \leq \|\tilde \bw\|_{L^\infty(\Gamma_h)}\|\unablah v^e\|_{\Gamma_h} \|\zeta\|_{\Gamma_h} \leq c h^{q+1} \|v\|_{H^1(\Gamma)}   \|\zeta\|_{H^1(\Gamma_h)}.
\end{align*}
The term in \eqref{surfconv2} can be bounded in the same way. Finally we consider the adsorption-desorption term in \eqref{adsorption}. Using the results in Lemmas~\ref{L_Phi},\,\ref{lemmsm} we get
\begin{align*}
& \sum_{i=1}^2\Big|(u_i^e-  q_iv^e,\eta_i-K\zeta)_{\Gamma_h}-
((u_i- q_iv)\circ\Phi_h,\mu_h(\eta_i-K\zeta))_{\Gamma_h}\Big| \\
 & \leq  \sum_{i=1}^2\big| (u_i^e - \mu_h u_i \circ\Phi_h,\eta_i-K\zeta)_{\Gamma_h}\big| + q_i \big|((1- \mu_h)v^e,\eta_i-K\zeta)_{\Gamma_h}\big| \\
& \leq c h^{q+1}  \sum_{i=1}^2\big( \|u_i\|_{H^2(\Omega_i)} + \|v\|_\Gamma\big)\big( \|\eta_i\|_{\Gamma_h} + \|\zeta\|_{\Gamma_h}\big).\\
& \leq c\,h^{q+1}(\|u\|_{H^2(\Omega_1\cup \Omega_2)}+ \|v\|_\Gamma)(\|\eta\|_{H^1(\Omega_{1,h}\cup \Omega_{2,h})}+\|\zeta\|_{\Gamma_h}).
\end{align*}
Combining these estimates for the terms in \eqref{diff}-\eqref{adsorption} completes the proof.
\quad\end{proof}
\medskip

From the arguments in the proof above one easily sees that the bounds derived in Lemma~\ref{lemA} also hold if in $a_h(\cdot;\cdot)$ and $a(\cdot;\cdot)$ the arguments are interchanged. This proves the result in the following lemma, that we need in the $L^2$-error analysis.
\begin{lemma}\label{lemA1}
The following estimate holds:
\[ \begin{split}
 & \big| a_h(\Theta;U^e)-a(\Theta\circ\Phi_h^{-1};U) \big|  \\ & \le c h^{q} \big(\|\eta \|_{H^{1}(\Omega_{1,h} \cup \Omega_{2,h})}+\|\zeta\|_{H^1(\Gamma_h)} \big) \big(\|u\|_{H^2(\Omega_1 \cup \Omega_2)}+\|v\|_{H^1(\Gamma)} \big)
 \end{split} \]
for all $U=(u,v) \in H^2(\Omega_1\cup\Omega_2) \times H^1(\Gamma)$,  $\Theta=(\eta,\zeta)\in H^{1}(\Omega_{1,h} \cup \Omega_{2,h}) \times H^1(\Gamma_h)$.
\end{lemma}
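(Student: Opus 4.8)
The plan is to retrace the proof of Lemma~\ref{lemA} almost verbatim, using that both bilinear forms split into a symmetric part (bulk and surface diffusion), a convection part that can be written skew-symmetrically, and the adsorption coupling $\sum_i(\cdot-q_i\cdot,\cdot-K\cdot)_{\Gamma_h}$. First I would expand $a_h(\Theta;U^e)-a(\Theta\circ\Phi_h^{-1};U)$ into the same five groups of terms as in \eqref{diff}--\eqref{adsorption}, using the gradient transformation rules \eqref{tr1}, \eqref{tr2}, \eqref{Demrel}, the integral transformation identities $\int_{\Omega_i}f\,d\bx=\int_{\Omega_{i,h}}(f\circ\Phi_h)J_h\,d\by$ and $\int_\Gamma g\,d\bs=\int_{\Gamma_h}(g\circ\Phi_h)\mu_h\,d\bs_h$ with $J_h=\det(\mathrm{D}\Phi_h)$ and $\mu_h$ from \eqref{muh2}, and, for the convection terms, the continuous skew-symmetry relations \eqref{w-forms} to first rewrite the convection part of $a(\cdot;\cdot)$ in skew form, exactly as in the proof of Lemma~\ref{lemA}. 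The resulting terms are precisely the terms \eqref{diff}--\eqref{adsorption} with the roles of $(u^e,v^e)$ and $(\eta,\zeta)$ interchanged; inside the skew-symmetric brackets this merely permutes and re-signs the two summands, which is irrelevant because each summand is estimated separately.

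For the symmetric diffusion terms I would split off, as in \eqref{improve}, the factor $\nabla u^e-(\nabla u)\circ\Phi_h$ (bounded by $ch^{q+1}\|u\|_{H^2(\Omega_i)}$ via \eqref{differ_est2}) and the factor $\bI-J_h(\mathrm{D}\Phi_h)^{-T}$ (bounded by $ch^q$ via \eqref{mapping}, \eqref{det}); together with $\|(\nabla u)\circ\Phi_h\|_{\Omega_{i,h}}\le c\|u\|_{H^1(\Omega_i)}$ (change of variables and boundedness of $J_h^{-1}$) this gives the bound $ch^q\|u\|_{H^2(\Omega_1\cup\Omega_2)}\|\eta\|_{H^1(\Omega_{1,h}\cup\Omega_{2,h})}$; the surface-diffusion contribution is handled with the matrix $\bA$ from the proof of Lemma~\ref{lemA}, for which $\|\bA\|_{L^\infty(\Gamma_h)}\le ch^{q+1}$. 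For the bulk and surface convection terms nothing changes relative to Lemma~\ref{lemA}: the four summands in each bracket are bounded one by one using $\|\bw_h-\bw\circ\Phi_h\|_{L^\infty}\le ch^{q+1}$, $\|1-J_h\|_{L^\infty}\le ch^q$, $\|\bI-(\mathrm{D}\Phi_h)^{-T}\|_{L^\infty}\le ch^q$, $\|u^e-u\circ\Phi_h\|_{\Omega_{i,h}}\le ch^{q+1}\|u\|_{H^1(\Omega_i)}$ and the auxiliary field $\tilde\bw=\bP_h(\bw_h-\mu_h\bB^T(\bw\circ\Phi_h))$ with $\|\tilde\bw\|_{L^\infty(\Gamma_h)}\le ch^{q+1}$, which yields $ch^q\|u\|_{H^2}\|\eta\|_{H^1}$ from the bulk part and $ch^{q+1}\|v\|_{H^1(\Gamma)}\|\zeta\|_{H^1(\Gamma_h)}$ from the surface part.

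For the adsorption term, after transforming the $\Gamma$-integral to $\Gamma_h$ and using $v\circ\Phi_h=v^e$ on $\Gamma_h$, I am left with $\sum_i\bigl(\eta_i-q_i\zeta,\,(u_i^e-u_i\circ\Phi_h)+(1-\mu_h)u_i\circ\Phi_h-(1-\mu_h)Kv^e\bigr)_{\Gamma_h}$, which by \eqref{muh2}, \eqref{differ_est3}, a trace inequality and the ($h$-uniform) bound $\|\eta_i\|_{\Gamma_h}\le c\|\eta\|_{H^1(\Omega_{i,h})}$ already used in Lemma~\ref{lemA} is of order $h^{q+1}\bigl(\|u\|_{H^2(\Omega_1\cup\Omega_2)}+\|v\|_\Gamma\bigr)\bigl(\|\eta\|_{H^1(\Omega_{1,h}\cup\Omega_{2,h})}+\|\zeta\|_{H^1(\Gamma_h)}\bigr)$; the only difference from Lemma~\ref{lemA} is the bookkeeping of the constants $q_i$ and $K$, which does not affect the order. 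Summing the five contributions gives the claimed $O(h^q)$ bound. I do not expect a genuine obstacle here; the one thing to verify is that, since only the $m=0$ estimate is claimed, none of the interchanged terms needs $\eta$ or $\zeta$ in $H^2$, so the localization argument of the type used in \eqref{estimprove} is not required.
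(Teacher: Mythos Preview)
Your proposal is correct and follows precisely the approach indicated in the paper, which simply remarks that the bounds derived in the proof of Lemma~\ref{lemA} also hold when the arguments of $a_h(\cdot;\cdot)$ and $a(\cdot;\cdot)$ are interchanged. You have spelled out exactly this: the symmetric diffusion terms are literally unchanged, the skew-symmetric convection brackets merely have their summands permuted and re-signed, and the adsorption term has $q_i$ and $K$ exchanged in role without affecting the estimates; your closing observation that only the $m=0$ case is needed, so the $H^2$-localization step \eqref{estimprove} is unnecessary, is also correct.
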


\medskip

As an immediate corollary of Lemma~\ref{lemA}, the definition of $F_h$ and the regularity estimate in Theorem~\ref{Th_wp} we obtain the following result.
 \begin{lemma}\label{lemcombine}
Assume that the solution $(u,v)$ of \eqref{weak} has smoothness $u\in H^2(\Omega_1\cup\Omega_2)$, $v\in H^2(\Gamma)$.
For $m=0$ and $m=1$ the following holds:
\begin{equation}\label{consist_est}
|F_h(\Theta)|\le c h^{q+m} (\|f\|_\Omega+\|g\|_\Gamma\big) \big(\|\eta \|_{H^{1+m}(\Omega_{1,h} \cup \Omega_{2,h})}+\|\zeta\|_{H^1(\Gamma_h)} \big)
 \end{equation}
for all   $\Theta=(\eta,\zeta)\in H^{1+m}(\Omega_{1,h} \cup \Omega_{2,h}) \times H^1(\Gamma_h)$.
\end{lemma}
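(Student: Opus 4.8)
The plan is to obtain \eqref{consist_est} by chaining the abstract consistency bound of Lemma~\ref{lemA} with the $H^2$ a-priori estimate of Theorem~\ref{Th_wp}; no new ideas are required.

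First I would recall that, by definition, $F_h(\Theta)=a_h(U^e;\Theta)-a(U;\Theta\circ\Phi_h^{-1})$ where $U=(u,v)$ is the exact solution of \eqref{weak}. Under the standing hypothesis that $u\in H^2(\Omega_1\cup\Omega_2)$ and $v\in H^2(\Gamma)$ (so in particular $v\in H^1(\Gamma)$), the pair $U$ is an admissible argument in Lemma~\ref{lemA}. Applying that lemma with the prescribed $m\in\{0,1\}$ and an arbitrary $\Theta=(\eta,\zeta)\in H^{1+m}(\Omega_{1,h}\cup\Omega_{2,h})\times H^1(\Gamma_h)$ gives
\[
|F_h(\Theta)|\le c\,h^{q+m}\big(\|u\|_{H^2(\Omega_1\cup\Omega_2)}+\|v\|_{H^1(\Gamma)}\big)\big(\|\eta\|_{H^{1+m}(\Omega_{1,h}\cup\Omega_{2,h})}+\|\zeta\|_{H^1(\Gamma_h)}\big).
\]

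Next I would bound the solution-dependent factor by the data. Using $\|v\|_{H^1(\Gamma)}\le\|v\|_{H^2(\Gamma)}$ and $\|f\|_\Omega^2=\|f_1\|_{\Omega_1}^2+\|f_2\|_{\Omega_2}^2$, the second a-priori estimate \eqref{apr_est2} of Theorem~\ref{Th_wp} yields
\[
\|u\|_{H^2(\Omega_1\cup\Omega_2)}+\|v\|_{H^1(\Gamma)}\le\|u_1\|_{H^2(\Omega_1)}+\|u_2\|_{H^2(\Omega_2)}+\|v\|_{H^2(\Gamma)}\le c\big(\|f\|_\Omega+\|g\|_\Gamma\big).
\]
Substituting this into the previous display reproduces \eqref{consist_est} verbatim and finishes the argument.

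I do not expect a genuine obstacle: the statement is a direct corollary. The only point deserving a word of care is that invoking \eqref{apr_est2} presupposes the regularity hypotheses of Theorem~\ref{Th_wp} (a $C^2$ interface and a convex or $C^2$-smooth $\partial\Omega$); these are consistent with the smoothness assumptions standing throughout Section~\ref{s_error}, and in any case the assumed extra regularity of $(u,v)$ is precisely what makes the bound $\|u\|_{H^2(\Omega_1\cup\Omega_2)}+\|v\|_{H^2(\Gamma)}\le c(\|f\|_\Omega+\|g\|_\Gamma)$ available, so one could alternatively take this estimate as the starting hypothesis.
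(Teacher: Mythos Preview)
Your proposal is correct and follows exactly the paper's own argument: the paper states this lemma as ``an immediate corollary of Lemma~\ref{lemA}, the definition of $F_h$ and the regularity estimate in Theorem~\ref{Th_wp}'', which is precisely the chain you describe. Your remark about the regularity hypotheses underlying \eqref{apr_est2} is also appropriate and in line with the standing assumptions of Section~\ref{s_error}.
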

\smallskip

\begin{remark} \label{remalter} \rm The consistency estimate in Lemma~\ref{lemcombine} is proved for the particular choice of the finite element
problem right-hand side as in \eqref{choicerhs}. This choice simplifies the analysis because the terms in \eqref{term2} vanish. Similar estimates, however,  can be proved if $f_h$ and $g_h$
are chosen as generic smooth extensions of $f\in H^1(\Omega_1\cup\Omega_2)$ and $g \in L^2(\Gamma)$. For example, one may
set $f_h=f^e$, $g_h=g^e|_{\Gamma_h}-c_f$, where $c_f$ is such that mean value condition is satisfied.
In this case,   the following estimate holds for the terms in \eqref{term2}:
\begin{equation*} \begin{split}
 & \big|(f,\eta\circ\Phi_h^{-1})_{\Omega}+(g,\zeta\circ\Phi_h^{-1})_{\Gamma}-(f^e,\eta)_{\Omega}-(g^e,\zeta)_{\Gamma_h}+(c_f,\zeta)_{\Gamma_h}\big| \\ & \leq  ch^{q+1} \big(\|f\|_{H^1(\Omega_1 \cup \Omega_2)} +\|g\|_\Gamma\big) \big( \|\eta\|_{H^1(\Omega_{1,h}\cup \Omega_{2,h})} + \|\zeta\|_{\Gamma_h}\big)
\end{split} \end{equation*}
for all $(\eta,\zeta)\in H^{1}(\Omega_{1,h} \cup \Omega_{2,h}) \times H^1(\Gamma_h)$. A proof of this estimate is given in   the preprint version of this paper
\cite{ThisPreprint}. The discretization error bounds in  \eqref{err_estH1} and \eqref{errL2} hold for this alternative right-hand side choice, provided  $\|f\|_\Omega$ is replaced by $\|f\|_{H^1(\Omega_1 \cup \Omega_2)}$.
\end{remark}

\subsection{Discretization error bound in the $\bV_h$-norm}

Based on the stability, continuity and approximate Galerkin properties presented in the previous sections, we derive a discretization  error bound in the $\bV_h$ norm. Due to the approximation of the interface the discrete solution $U_h=(u_h,v_h)$ has a domain that differs from that of the solution $U=(u,v)$ to the continuous problem. Therefore, it is not appropriate to define the error as $U-U_h$. It is natural to define the discretization error either as $U^e-U_h$, with functions defined on the domain corresponding to the discrete problem, or as $U-U_h \circ \Phi_h^{-1}$, with functions defined on the domain corresponding to the continuous problem. We use the former definition.
 In the analysis we need suitable interpolation operators, applicable to $U^e$.
The function $u^e$ consists of the pair $u^e=(E_1(u_{|\Omega_1})_{|\Omega_{1,h}},E_2(u_{|\Omega_2})_{|\Omega_{2,h}})=:(u^e_1,u^e_2)$, cf. \eqref{defexti}.
 As is standard in analyses of XFEM (or unfitted FEM) we define an interpolation based on the standard nodal interpolation of the smooth extension in the bulk space  $ V_h^{\rm bulk}$. Let $I_h^{\rm bulk}$ denote the nodal interpolation in  $ V_h^{\rm bulk}$ (which consists of  finite elements of degree $k$). We define $I_h u^e \in V_{\Omega,h}$ as follows:
\[
  I_hu^e= \big( [I_h^{\rm bulk}E_1(u_{|\Omega_1})]_{|\Omega_{1,h}},[I_h^{\rm bulk}E_2(u_{|\Omega_2})]_{|\Omega_{2,h}}\big).
\]
The construction of this operator and interpolation error bounds for $I_h^{\rm bulk}$ immediately yield
\begin{equation} \label{intu}
 \|I_hu^e-u^e\|_{H^1(\Omega_{1,h} \cup \Omega_{2,h})} \le c\, h^{k}  \|u\|_{H^{k+1}(\Omega_1 \cup \Omega_2)}.
\end{equation}
For the interpolation of $v^e$ we use a similar approach, namely $I_h v^e:=[I_h^{\rm bulk}v^e]_{|\Gamma_h}$. Here the interpolation operator $I_h^{\rm bulk}$ is applied only on the tetrahedra that are intersected by $\Gamma_h$. Interpolation error bounds for this operator are known in the literature, see, e.g.,  Theorem 4.2 in \cite{Reusken2014}:
\begin{equation}\label{intv}
 \|I_h v^e- v^e\|_{H^1(\Gamma_h)} \le c\, h^{k}  \|v\|_{H^{k+1}(\Gamma)}.
\end{equation}
Using these interpolation error bounds we obtain the following main theorem.
\begin{theorem}\label{Th_Conv1}  Let the solution $(u,v)\in\bVt$ of \eqref{weak2} be sufficiently smooth. For the finite element solution $(u_h,v_h)\in\bVt_h$ the following error estimate holds:
\begin{equation}\label{err_estH1}
\begin{split}
\|(u^e-u_h,v^e-v_h)\|_{\bV_h} & \le c h^k \big(\|u\|_{H^{k+1}(\Omega)} + \|v\|_{H^{k+1}(\Gamma)}\big)  \\
   &\,  + c h^q \big(\|f\|_{\Omega} + \|g\|_{\Gamma}\big),
\end{split} \end{equation}
where  $k$ is the degree of the finite element polynomials and $q$ the geometry approximation order  defined in \eqref{phi_h}.
\end{theorem}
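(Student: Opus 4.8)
The estimate is a standard stability-plus-consistency (Strang/C\'ea-type) argument, now that all the technical ingredients are in place: the discrete inf-sup and continuity bounds of Theorem~\ref{L_infsup2h}, the approximate Galerkin relation \eqref{ApproxGal}, the consistency bound of Lemma~\ref{lemcombine}, and the interpolation estimates \eqref{intu}--\eqref{intv}. Only their assembly remains; no new delicate estimate is needed.

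First I would split the error via the triangle inequality, $\|U^e-U_h\|_{\bV_h}\le\|U^e-\widehat U_h\|_{\bV_h}+\|\widehat U_h-U_h\|_{\bV_h}$, where, instead of the plain interpolant $I_hU^e$, I use a gauge-corrected one $\widehat U_h:=I_hU^e-\gamma_h(K,1)$, with $(K,1)$ the constant pair $(K,K,1)$ and $\gamma_h\in\R$ the unique number making $\widehat U_h$ satisfy the discrete gauge condition \eqref{Gaugediscr}, i.e. $\widehat U_h\in\bVt_h$. This correction is well posed because the discrete gauge functional takes on $(K,1)$ the value $K^2(1+r)|\Omega_{1,h}|+K^2(1+\tfrac1r)|\Omega_{2,h}|+|\Gamma_h|$, which is bounded away from $0$ uniformly in $h$. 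Since the exact solution $U\in\bVt$ obeys the continuous gauge \eqref{Gau}, the gauge defect of $I_hU^e$ — hence $|\gamma_h|$ — is controlled by the interpolation error (order $h^k$) plus the $O(h^{q+1})$ discrepancy between integrals over $\Omega_{i,h},\Gamma_h$ and over $\Omega_i,\Gamma$ (bounded, as in Lemma~\ref{lemmsm}, with the tube inequality of \cite{ElliottRanner}). Combining this with \eqref{intu}--\eqref{intv} and absorbing the low-order norms of $U$ into the data through \eqref{apr_est1}, I obtain $\|U^e-\widehat U_h\|_{\bV_h}\le c\,h^k(\|u\|_{H^{k+1}(\Omega_1\cup\Omega_2)}+\|v\|_{H^{k+1}(\Gamma)})+c\,h^q(\|f\|_\Omega+\|g\|_\Gamma)$.

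It then remains to bound the fully discrete quantity $e_h:=\widehat U_h-U_h\in\bVt_h$. By the discrete inf-sup property \eqref{infsup2h} there is $\Theta_h\in\bV_{h,\alpha}$ with $\|\Theta_h\|_{\bV_h}\le1$ and $a_h(e_h;\Theta_h)\ge\tfrac12 C_{st}\|e_h\|_{\bV_h}$. Writing $e_h=(\widehat U_h-U^e)+(U^e-U_h)$ and using the approximate Galerkin relation \eqref{ApproxGal}, which holds for every $\Theta_h\in\bV_h$, gives $a_h(e_h;\Theta_h)=a_h(\widehat U_h-U^e;\Theta_h)+F_h(\Theta_h)$. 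The first term I bound by the continuity estimate \eqref{conth} together with the bound for $\|U^e-\widehat U_h\|_{\bV_h}$ just obtained; the second term I bound by Lemma~\ref{lemcombine} with $m=0$ — this $m=0$, forced because the test function $\Theta_h$ is only $H^1$-regular, is exactly why the geometric order enters as $h^q$ rather than $h^{q+1}$ — so that $|F_h(\Theta_h)|\le c\,h^q(\|f\|_\Omega+\|g\|_\Gamma)$. Dividing by $\|e_h\|_{\bV_h}$ and inserting into the splitting of the previous paragraph yields \eqref{err_estH1}.

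The only point requiring genuine care — not really an obstacle, since the difficult work is encapsulated in Lemmas~\ref{lemA} and \ref{lemcombine} — is the gauge bookkeeping: the discrete stability \eqref{infsup2h} controls only functions in the constrained space $\bVt_h$ paired against test functions in $\bV_{h,\alpha}$, so one must actually project the interpolant into $\bVt_h$ and verify that the correction constant $\gamma_h$ has the claimed higher order; everything else is a mechanical combination of stability, continuity, consistency, and interpolation error.
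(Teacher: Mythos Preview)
Your proof is correct and follows the same second-Strang-lemma approach as the paper: split off the interpolant, apply the discrete inf--sup \eqref{infsup2h}, use continuity \eqref{conth} and the consistency bound \eqref{consist_est} (with $m=0$), and finish with \eqref{intu}--\eqref{intv}. The one notable difference is that you gauge-correct the interpolant to force $\widehat U_h\in\bVt_h$ before invoking the inf--sup, whereas the paper applies \eqref{infsup2h} directly to $I_hU^e-U_h$ without addressing whether this difference lies in $\bVt_h$; your extra step is a legitimate and cheap fix for a point the paper glosses over, not a different strategy.
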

\begin{proof} We use arguments similar to the second Strang's lemma.  Recalling the stability and continuity results from \eqref{infsup2h}, \eqref{conth}, the consistency error bound in Lemma~\ref{lemcombine} and the interpolation error bounds in \eqref{intu}, \eqref{intv} we get, with $I_hU^e=(I_hu^e, I_hv^e)$:
\begin{equation}\label{Th1_aux1}
\begin{split}
&\|I_hU^e-U_h\|_{\bV_h}\le C_{st}^{-1} \sup_{\Theta_h \in\bV_{h}}\frac{a_h(I_hU^e-U_h;\Theta_h)}{\|\Theta_h\|_{\bV_h}}\\
&=
C_{st}^{-1} \sup_{\Theta_h\in\bV_{h}}\Big(\frac{a_h(I_hU^e -U^e;\Theta_h)}{\|\Theta_h\|_{\bV_h}} +
\frac{ a_h(U^e-U_h;\Theta_h)}{\|\Theta_h\|_{\bV_h}}\Big)\\
&\le c\,\Big(\|I_hU^e-U^e\|_{\bV_h} +\sup_{\Theta_h \in\bV_{h}}\frac{
F_h(\Theta_h)}{\|\Theta_h\|_{\bV_h}}
\Big) \\
& \leq c\,h^k (\|u\|_{H^{k+1}(\Omega)}+\|v\|_{H^{k+1}(\Gamma)}) +  c h^q (\|f\|_\Omega+\|g\|_\Gamma).
\end{split}
\end{equation}
The desired result now follows by a triangle inequality and applying the interpolation error estimates \eqref{intu}, \eqref{intv} once more.
\end{proof}

\section{Error estimate in $L^2$-norm}\label{S_errorL2}
In this section we use a duality argument to show higher order convergence of the unfitted finite element method in the $\mathbf{L}^2$ product norm. As typical in the analysis of elliptic PDEs with Neumann boundary conditions, one considers the $L^2$ norm in a factor space:
\[
\|U\|_{\mathbf{L}^2/\mathbb{R}}=\inf_{\gamma\in\mathbb{R}}\|U-\gamma(q_1,q_2,1)\|_{L^2(\Omega) \times L^2(\Gamma)},\quad\text{for}~U\in L^2(\Omega) \times L^2(\Gamma),
\]
and $q_1,q_2\in[0,1]$ from \eqref{Totaltrans}.
A similar norm can be defined on $L^2(\Omega) \times L^2(\Gamma_h)$.

Define the error $E_h:=(U^e-U_h)\circ \Phi_h^{-1}\in L^2(\Omega) \times L^2(\Gamma)$. There is a constant
$\gamma\in\mathbb{R}$ such that $\tilde E_h:=E_h-\gamma(q_1,q_2,1)$ satisfies the consistency condition \eqref{rhs_gaude_Adjoint}.
According to Theorem~\ref{Th_wpd}  the dual problem: Find $W \in \bV_\alpha $ such that
\begin{equation} \label{dual}
  a(\Theta;W)=(\tilde E_h,\Theta)_{L^2(\Omega) \times L^2(\Gamma)} \quad \text{for all}~\Theta \in \bV,
\end{equation}
has the unique solution $W=(w,z)\in H^2(\Omega_1\cup \Omega_2)  \times H^2(\Gamma)$,
satisfying
\begin{equation} \label{regdual}
 \|w\|_{H^{2}(\Omega_1\cup\Omega_2)}+\|z\|_{H^{2}(\Gamma)} \leq c \|\tilde E_h\|_{L^2(\Omega) \times L^2(\Gamma)},
\end{equation}
with a constant $c$ independent of $\tilde E_h$.
\begin{theorem}\label{ThL2}  Let the assumptions in Theorem~\ref{Th_Conv1} and Theorem~\ref{Th_wpd} be fulfilled. For the finite element solution $(u_h,v_h)\in\bVt_h$ the following error estimate holds:
\begin{equation}\label{errL2}
\begin{split}
\|u^e-u_h,v^e-v_h\|_{\mathbf{L}^2/\mathbb{R}} & \le c h^{k+1} \big(\|u\|_{H^{k+1}(\Omega_1 \cup \Omega_2)} + \|v\|_{H^{k+1}(\Gamma)}\big)  \\
   &\,  + c h^{q+1} \big(\|f\|_\Omega + \|g\|_{\Gamma}\big),
\end{split}
\end{equation}
where  $k$ is the degree of the finite element polynomials and $q$ the geometry approximation order  defined in \eqref{phi_h}.
\end{theorem}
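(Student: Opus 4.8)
The plan is to use a duality (Aubin--Nitsche) argument based on the dual problem \eqref{dual}, adapted to the geometric variational crime. Since $\Phi_h$ is bi-Lipschitz and $U^e-U_h\in H^1(\Omega_{1,h}\cup\Omega_{2,h})\times H^1(\Gamma_h)$, the error $E_h=(U^e-U_h)\circ\Phi_h^{-1}$ lies in $\bV$, and hence so does $\tilde E_h=E_h-\gamma(q_1,q_2,1)$. Taking $\Theta=\tilde E_h$ in \eqref{dual} and using bilinearity of $a$ together with the identity $a((q_1,q_2,1);W)=0$ observed in section~\ref{S_dual}, one gets
\[
\|\tilde E_h\|_{L^2(\Omega)\times L^2(\Gamma)}^2=a(\tilde E_h;W)=a\big((U^e-U_h)\circ\Phi_h^{-1};W\big),
\]
where $W=(w,z)\in H^2(\Omega_1\cup\Omega_2)\times H^2(\Gamma)$ satisfies the regularity bound \eqref{regdual}. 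The rest of the proof bounds the right-hand side.

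First I would pass from $a$ to the discrete form $a_h$. Let $W^e=(w^e,z^e)$ denote the extension of $W$ (the extension \eqref{defexti} for $w$, the normal extension for $z$). Lemma~\ref{lemA1}, applied with first argument $U^e-U_h$ and second argument $W$, combined with \eqref{regdual} and the energy estimate of Theorem~\ref{Th_Conv1}, gives
\[
\big|a\big((U^e-U_h)\circ\Phi_h^{-1};W\big)-a_h(U^e-U_h;W^e)\big|\le c\,h^q\,\|U^e-U_h\|_{\bV_h}\,\|\tilde E_h\|_{L^2(\Omega)\times L^2(\Gamma)},
\]
and $h^q\|U^e-U_h\|_{\bV_h}\le c\big(h^{q+k}(\|u\|_{H^{k+1}(\Omega_1\cup\Omega_2)}+\|v\|_{H^{k+1}(\Gamma)})+h^{2q}(\|f\|_\Omega+\|g\|_\Gamma)\big)$, which since $k,q\ge1$ is of the order appearing in \eqref{errL2}.

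For $a_h(U^e-U_h;W^e)$ I would insert the interpolant $W^e=I_hW^e+(W^e-I_hW^e)$ with $I_hW^e\in\bV_h$, and use the approximate Galerkin relation \eqref{ApproxGal} with linearity of $F_h$:
\[
a_h(U^e-U_h;W^e)=F_h(W^e)-F_h(W^e-I_hW^e)+a_h(U^e-U_h;W^e-I_hW^e).
\]
Since $W^e$ is genuinely $H^2$ on each bulk subdomain, Lemma~\ref{lemcombine} with $m=1$ and \eqref{regdual} bound $|F_h(W^e)|$ by $c\,h^{q+1}(\|f\|_\Omega+\|g\|_\Gamma)\|\tilde E_h\|_{L^2(\Omega)\times L^2(\Gamma)}$; the interpolation remainder $W^e-I_hW^e$ is only $H^1$, so I would use Lemma~\ref{lemcombine} with $m=0$ together with the first-order versions of \eqref{intu}, \eqref{intv} (available because $k\ge1$ and $W\in H^2$, yielding an extra factor $h$), which again produces order $h^{q+1}(\|f\|_\Omega+\|g\|_\Gamma)\|\tilde E_h\|$; finally the continuity bound \eqref{conth} gives $|a_h(U^e-U_h;W^e-I_hW^e)|\le c\,h\,\|U^e-U_h\|_{\bV_h}\|\tilde E_h\|$, which by Theorem~\ref{Th_Conv1} is of order $\big(h^{k+1}(\|u\|_{H^{k+1}(\Omega_1\cup\Omega_2)}+\|v\|_{H^{k+1}(\Gamma)})+h^{q+1}(\|f\|_\Omega+\|g\|_\Gamma)\big)\|\tilde E_h\|$. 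Adding the four contributions, dividing by $\|\tilde E_h\|_{L^2(\Omega)\times L^2(\Gamma)}$, and finally transferring back from $E_h$ to $(u^e-u_h,v^e-v_h)$ by the change of variables $\Phi_h$ (which costs at most a factor $1+ch^q$, by $\|1-\det(\mathrm{D}\Phi_h)\|_{L^\infty}\le ch^q$ and $\|1-\mu_h\|_{L^\infty}\le ch^{q+1}$) yields \eqref{errL2}.

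The main difficulty is not any single estimate but the bookkeeping of powers of $h$: the dual solution $W$ has only $H^2$ regularity, so interpolating $W^e$ buys merely one extra factor $h$, and one must verify that every remainder term — the domain-transfer term of Lemma~\ref{lemA1}, the consistency terms $F_h(W^e)$ and $F_h(W^e-I_hW^e)$, and the continuity term — carries the full gain $h^{\min(k,q)+1}$. This works precisely because $k\ge1$ and $q\ge1$, so that $h^{q+k}$ and $h^{2q}$ are dominated by $h^{k+1}+h^{q+1}$, and because the $H^{k+1}$ norms of $u,v$ enter only through Theorem~\ref{Th_Conv1} (hence only multiplying $h^{k+1}$) whereas the $L^2$ norms of $f,g$ enter only through $F_h$ and the $h^q$ part of Theorem~\ref{Th_Conv1} (hence only multiplying $h^{q+1}$). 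A secondary point of care is that the sharper ($m=1$) form of Lemma~\ref{lemcombine} is available only for the smooth extension $W^e$ and not for its finite element interpolant, which is why $F_h(I_hW^e)$ must be split as $F_h(W^e)-F_h(W^e-I_hW^e)$.
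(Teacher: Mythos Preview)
Your proposal is correct and follows essentially the same duality argument as the paper: the same decomposition into the four terms (the domain-transfer term handled by Lemma~\ref{lemA1}, the continuity term $a_h(U^e-U_h;W^e-I_hW^e)$, and the split $F_h(I_hW^e)=F_h(W^e)+F_h(I_hW^e-W^e)$), each bounded exactly as you describe. The only cosmetic difference is that the paper places the change-of-variables step $\|U^e-U_h\|_{\mathbf{L}^2/\mathbb{R}}\le c\|\tilde E_h\|_{L^2(\Omega)\times L^2(\Gamma)}$ at the beginning rather than the end.
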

\begin{proof} First, let $\gamma_{\rm opt}:=\arg\inf_{\gamma\in\mathbb{R}}\|E_h-\gamma(q_1,q_2,1)\|_{L^2(\Omega) \times L^2(\Gamma)}$. Observe
the chain of estimates:
\[
\begin{aligned}
\|U^e-U_h\|_{\mathbf{L}^2/\mathbb{R}}&\le \|U^e-U_h-\gamma_{\rm opt}(q_1,q_2,1) \|_{L^2(\Omega) \times L^2(\Gamma_h)}\\
&=\|(U^e-U_h-\gamma_{\rm opt}(q_1,q_2,1))\circ \Phi_h^{-1}J_h^{-\frac12} \|_{L^2(\Omega) \times L^2(\Gamma)}
\\ &\le c\|(U^e-U_h-\gamma_{\rm opt}(q_1,q_2,1))\circ \Phi_h^{-1}\|_{L^2(\Omega) \times L^2(\Gamma)}\\
&= c\|(U^e-U_h)\circ \Phi_h^{-1}-\gamma_{\rm opt}(q_1,q_2,1)\|_{L^2(\Omega) \times L^2(\Gamma)}\\
&= c\,\|E_h\|_{\mathbf{L}^2/\mathbb{R}}  \leq c \|\tilde E_h\|_{L^2(\Omega)\times L^2(\Gamma)}.
\end{aligned}
\]
We apply the standard duality argument and thus obtain:
\begin{align*}
 &  \|\tilde E_h\|_{L^2(\Omega) \times L^2(\Gamma)}^2 = a(\tilde E_h,W) = a(E_h,W) \\
  &= a(E_h;W)-a_h(U^e-U_h;W^e) +a_h(U^e-U_h;W^e -I_hW^e) -a_h(U^e-U_h;I_hW^e) \\
  &= \big[a(E_h;W)-a_h(U^e-U_h;W^e)\big] +a_h(U^e-U_h;W^e -I_hW^e) + F_h(I_hW^e) \\
 & = \big[a(E_h;W)-a_h(U^e-U_h;W^e)\big] +a_h(U^e-U_h;W^e -I_hW^e) \\ & \quad  +F_h(I_hW^e-W^e)+F_h(W^e).
\end{align*}
These terms can be estimated as follows. For the term between square brackets we use Lemma~\ref{lemA1} and \eqref{regdual}:
\[ \begin{split}
 |a(E_h;W)-a_h(U^e-U_h;W^e)| & \leq ch^q \|U^e-U_h\|_{\bV_h}(\|w\|_{H^{2}(\Omega_1\cup\Omega_2)}+\|z\|_{H^{1}(\Gamma)})  \\ & \leq c h^q \|U^e-U_h\|_{\bV_h}\|\tilde E_h\|_{L^2(\Omega) \times L^2(\Gamma)}
\end{split} \]
For the second term we use continuity, the interpolation error  bound and \eqref{regdual}:
\[ \begin{split}
 |a_h(U^e-U_h;W^e -I_hW^e)| & \leq c h \|U^e-U_h\|_{\bV_h}(\|w\|_{H^{2}(\Omega_1\cup\Omega_2)}+\|z\|_{H^{2}(\Gamma)})  \\ & \leq c h \|U^e-U_h\|_{\bV_h}\|\tilde E_h\|_{L^2(\Omega) \times L^2(\Gamma)}.
\end{split} \]
For the third term we use Lemma~\ref{lemcombine} with $m=0$, the interpolation error  bound and \eqref{regdual}:
\[ \begin{split}
 |F_h(I_hW^e-W^e)| & \leq ch^q \big(\|f\|_\Omega + \|g\|_{\Gamma}\big)\|I_hW^e-W^e\|_{\bV_h} \\
  & \leq ch^{q+1} \big(\|f\|_\Omega+ \|g\|_{\Gamma}\big)\|\tilde E_h\|_{L^2(\Omega) \times L^2(\Gamma)}.
\end{split} \]
For the fourth term we use  Lemma~\ref{lemcombine} with $m=1$ and \eqref{regdual}:
\[
 |F_h(W^e)| \leq ch^{q+1}\big(\|f\|_\Omega + \|g\|_{\Gamma}\big)\|\tilde E_h\|_{L^2(\Omega) \times L^2(\Gamma)}.
\]
Combining these results and using the bound for $\|U^e-U_h\|_{\bV_h}$ of Theorem~\ref{Th_Conv1} completes the proof.
\end{proof}

\section{Numerical results}
We consider the stationary coupled bulk-interface convection diffusion problem \eqref{Total}  in the domain $\Omega=[-1.5, 1.5]^3$ and with the unit sphere $\Gamma=\{x\in\Omega:~ \|x\|_2=1\}$ as interface.
For the velocity field we take a rotating field in the $x$-$z$ plane: $\bw=\frac{1}{10}(z,0,-x)$. This $\bw$ satisfies the conditions \eqref{div_w} and \eqref{w.n}, i.e., $\Div\bw=0$ in $\Omega$ and $\bw \cdot \bn=0$ on $\Gamma$. On some parts of the boundary $\partial\Omega$ the velocity field $\bw$ is pointing inwards the domain, i.e., \eqref{cond2} does not hold. Hence, there are convective fluxes on $\partial \Omega$ and thus a Neumann boundary condition as in \eqref{Total} is not natural. For this reason, and to simplify the implementation, we use Dirichlet boundary conditions on $\partial\Omega$. Note that in this case we do not need the additional condition \eqref{Gau} to obtain well-posedness. For the scaling constant we take $K=1$.

\subsection{Convergence study}
In this experiment, the material parameters are chosen as $\nu_1=0.5$, $\nu_2=1$, $\nu_\Gamma=1$ and $\tilde k_{1,a}=0.5$, $\tilde k_{2,a}=2$, $\tilde k_{1,d}=2$, $\tilde k_{2,d}=1$.
The source terms $f_i\in L^2(\Omega)$, $i=1,2$, and $g\in L^2(\Gamma)$  in \eqref{Total} and the Dirichlet boundary data are taken such  that the exact solution of the coupled system is given by
\begin{equation} \label{exsol}
  \begin{split}
  v(x,y,z) &= 3x^2y - y^3, \\
  u_1(x,y,z) &= 2 u_2(x,y,z), \\
  u_2(x,y,z) &= e^{1-x^2-y^2-z^2} v(x,y,z).
\end{split}
\end{equation}
Note that the gauge condition \eqref{consfg} is satisfied for this choice of $f$ and $g$.
For the initial triangulation, $\Omega$ is divided into $4\times4\times4$ sub-cubes each consisting of 6 tetrahedra.
This initial mesh is uniformly refined up to 4 times, yielding $\T_h$. The discrete interface $\Gamma_h$ is obtained by linear interpolation of the signed distance function corresponding to $\Gamma$.  We use the finite element spaces in \eqref{bulk}-\eqref{FEspace} with $k=1$, i.e., $V_h^\mathrm{bulk}$ consists of piecewise linears on $\T_h$.\\
For the representation of functions in the trace finite element spaces $V_{\Gamma,h}$ and $V_{i,h}$, cf.~\eqref{FEspace}, we use the standard nodal basis functions in the bulk space $V_h^\mathrm{bulk}$. For $V_{\Gamma,h}$ and $V_{i,h}$ only those basis functions are used, the support of which has a nonzero intersection with $\Gamma_h$ and $\Omega_{1,h}$, respectively. Hence, the nodal finite element functions used for represented functions from the trace space $V_{\Gamma,h}$ are a subset of the basis functions that are used for representing functions from $V_{i,h}$.
The resulting coupled linear system is iteratively solved by a Generalized Conjugate Residual (GCR) method using a block diagonal preconditioner, where the bulk and interface systems are preconditioned by the symmetric Gauss-Seidel method.

The numerical solution $u_h, v_h$ after 2 grid refinements and the resulting interface approximation $\Gamma_h$ are shown in Figure~\ref{fig:numSol}.
\begin{figure}[ht!]
  \begin{center}
    \includegraphics[width=0.8\textwidth]{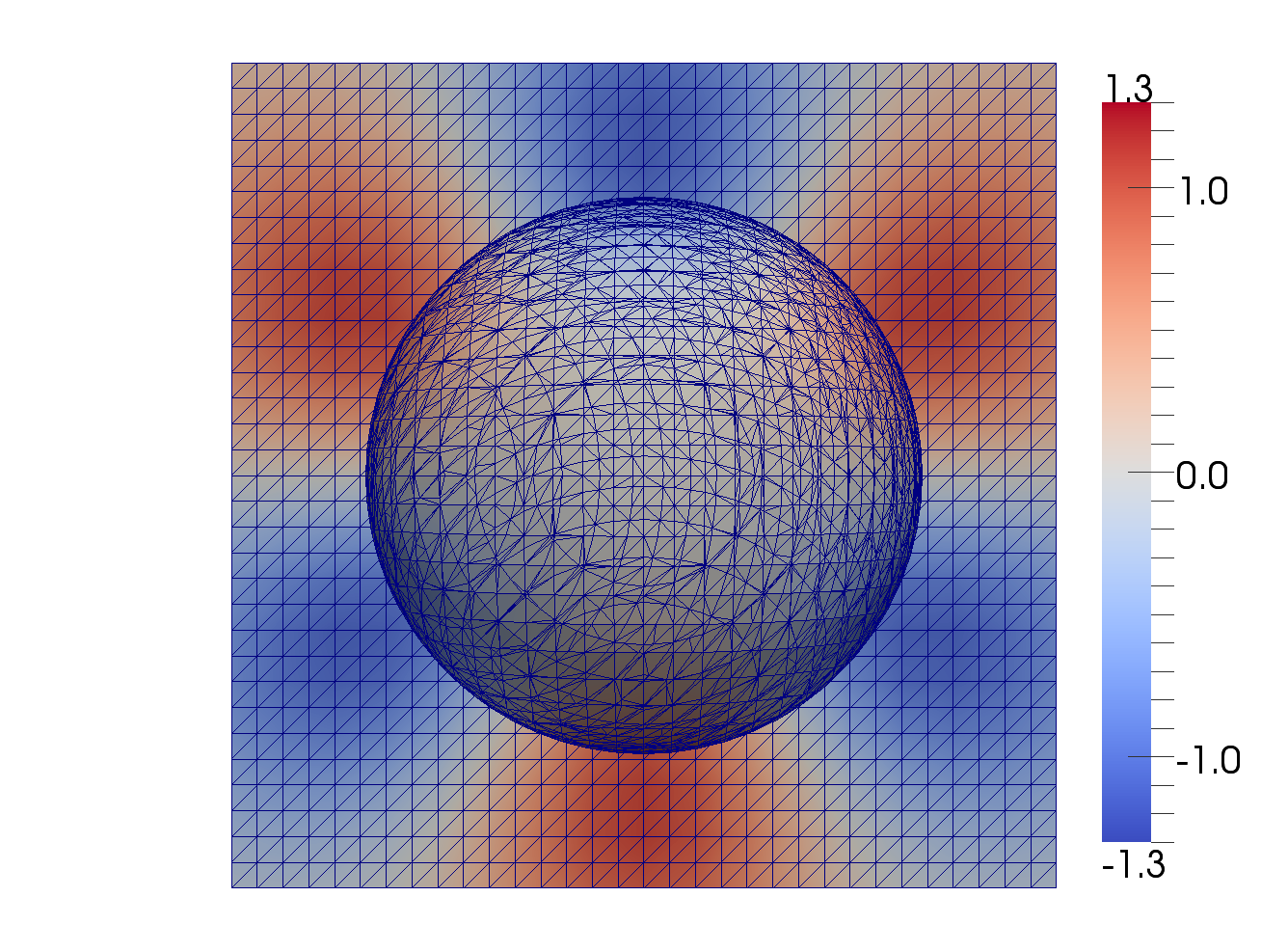}
  \end{center}
  \caption{Numerical solutions $v_h$ on $\Gamma_h$ and $u_h$ visualized on a cut plane $z=0$ for refinement level 2.}\label{fig:numSol}
\end{figure}

The $L^2$ and $H^1$ errors for the bulk and interface concentration are given in Tables~\ref{tab:bulkError} and \ref{tab:ifaceError}. For computing the discretization error on the ``discrete'' domains $\Omega_{i,h}$ and $\Gamma_h$ we use $u_{|\Omega_{i,h}}:=u_i$, $i=1,2$, $v_{|\Gamma_h}:=v$, with $u_i$ and $v$ as in  \eqref{exsol}.  In the column ``order'' we give the estimated $p$ in the error behavior ansatz $ch^p$. As expected, first order convergence is obtained for the $H^1$ errors of bulk and interface concentration, cf. Theorem~\ref{Th_Conv1}. The respective $L^2$ errors are of second order, which confirms the theoretical findings in Theorem~\ref{ThL2}.
\begin{table}[ht!]
\begin{center}
  \begin{tabular}{ccccc}
    \hline
    \# ref. & $\| u-u_h \|_{L^2(\Omega)}$ & order & $\| u-u_h \|_{H^1(\Omega_{1,h} \cup \Omega_{2,h})}$ & order
    \\\hline\\
    0	& 1.27E+0	& ---	& 6.07E+0	& ---  \\
    1	& 4.39E-1	& 1.53	& 3.39E+0	& 0.84 \\
    2	& 1.28E-1	& 1.78	& 1.79E+0	& 0.92 \\
    3	& 3.34E-2	& 1.94	& 9.12E-1	& 0.98 \\
    4	& 8.47E-3	& 1.98	& 4.58E-1	& 0.99
    \\\hline\\
  \end{tabular}
  \caption{$L^2$ and $H^1$ errors for bulk concentration $u_h$ on different refinement levels.\label{tab:bulkError}}
\end{center}
\end{table}
\begin{table}[ht! ]
\begin{center}
  \begin{tabular}{cccccc}
  \hline
    \# ref. & \# iter. & $\|v-v_h\|_{L^2(\Gamma_h)}$ & order & $\|v-v_h\|_{H^1(\Gamma_h)}$ & order
    \\\hline\\
    0	& 29	& 6.75E-1	& ---	& 4.60E+0	& ---  \\
    1	& 41	& 1.88E-1	& 1.85	& 2.30E+0	& 1.00 \\
    2	& 67	& 5.39E-2	& 1.80	& 1.01E+0	& 1.18 \\
    3	& 119	& 1.34E-2	& 2.01	& 5.15E-1	& 0.98 \\
    4	& 218	& 3.38E-3	& 1.99	& 2.52E-1	& 1.03 \\
    \hline\\
  \end{tabular}
  \caption{$L^2$ and $H^1$ errors for interface concentration $v_h$ and iteration numbers on different refinement levels.\label{tab:ifaceError}}
\end{center}
\end{table}

The number of GCR iterations to reach a residual norm below $10^{-10}$ are reported in Table~\ref{tab:ifaceError}. Although, due to the small areas in certain cut elements, the  condition numbers of the stiffness matrices can be extremely large, we do not observe a strong deterioration in the iteration numbers of the preconditioned Krylov solver. The iteration numbers in Table~\ref{tab:ifaceError} show a rather regular  $\sim h^{-1}$ growth behavior. 

\subsection{Effect of small desorption}
The theory presented indicates that both the model and the discretization are stable if the desorption coefficients tend to zero. For the simpler case with only one bulk domain such a uniform stability result has not been derived in \cite{BurmanHansbo,ElliottRanner}. The analysis of well-posedness in  \cite{ElliottRanner} for the case of one bulk domain makes essential use of the assumption that the desorption coefficient is strictly positive and the constants in the analysis may blow up if the desorption coefficient tends to zero. In the numerical experiments presented in \cite{BurmanHansbo,ElliottRanner} only $k_a=k_d=1$ is considered. In applications one typically has $0 <k_d \ll k_a$. Therefore
 we include results of an experiment with a small or even vanishing desorption coefficient. For the bulk concentration, homogeneous Dirichlet boundary data on $\partial\Omega$ are chosen. The source terms are set to $f_i=0$, $i=1,2$ and $g=1$, so bulk concentration can only be generated by desorption of interface concentration from $\Gamma$. The material parameters are  chosen as $\nu_1=0.5$, $\nu_2=1$, $\nu_\Gamma=1$ and $\tilde k_{1,a}=\tilde k_{2,a}=\tilde k_{2,d}=1$, $\tilde k_{1,d}=\varepsilon$ with $\varepsilon\geq0$. We use the same initial triangulation as before. This initial mesh is uniformly refined 3 times, and the discrete problem is solved on this mesh for different values of $\varepsilon$, yielding solutions $u_{i,h}^\varepsilon\in V_{i,h}$, $v_h^\varepsilon\in V_{\Gamma,h}$.
Table~\ref{tab:desorpEps} shows the mean bulk concentration of $u_{1,h}^\varepsilon$ in $\Omega_{1,h}$,
\begin{align*}
\bar u_{1,h}(\varepsilon):= |\Omega_{1,h}|^{-1}\int_{\Omega_{1,h}} u_{1,h}^\varepsilon\,dx,
\end{align*}
for different values of the desorption coefficient $\tilde k_{1,d}=\varepsilon$. We  clearly observe  a linear behavior, which can be expected, based on the relation
\begin{align*}
  \tilde k_{1,a}\int_\Gamma u_1 \,ds=\int_\Gamma u_1 \,ds &= \tilde k_{1,d}\int_\Gamma v \,ds,
\end{align*}
that holds  for the continuous solution   $u_1,v$ (follows from the first and third equation in \eqref{Total}). In the numerical experiments the discrete analogon of this relation turns out to be satisfied
with value  $\int_{\Gamma_h} v_h \,ds = 17.775$ for all considered values of $\varepsilon$.
For $\varepsilon=0$ we have $\bar u_{1,h}(0)=1.37\cdot 10^{-15}$ which is due to round-off errors and the chosen tolerance $\mathrm{tol} = 10^{-14}$ of the iterative solver.  Hence,  in the  discrete problem the (mean) bulk concentration  in $\Omega_1$ is very close to zero (which is the correct value from the continuous model), i.e, there is no ``numerical leakage'' of surfactant concentration through the interface. The iteration numbers are essentially independent of  $\varepsilon$ ($155\pm1$ iterations for the considered range of $\varepsilon$), indicating that the condition number of the resulting preconditioned system matrix is robust w.r.t.\ $\tilde k_{1,d}\to 0$. These results illustrate the well-posedness of the model and the stability of the discretization for $\tilde k_{1,d} \downarrow 0$.
\begin{table}
  \begin{minipage}[t]{0.45\textwidth}
    \begin{center}
    \begin{tabular}{cc}
      \hline
      $\tilde k_{d,1}$ & $\bar u_{1,h}(\tilde k_{d,1})$ \\
      \hline\\
      1E+0 & 1.42E+00 \\
      1E-1 & 1.42E-01 \\
      1E-3 & 1.42E-03 \\
      1E-5 & 1.42E-05 \\
      1E-10 & 1.42E-10 \\
      0 & 1.37E-15 \\
      \hline\\
    \end{tabular}
  \end{center}
  \caption{\noindent Mean bulk concentration in $\Omega_{1,h}$ for different values of the desorption coefficient $\tilde k_{d,1}$ for refinement level 3.\label{tab:desorpEps}}
  \end{minipage}\hfill
  \begin{minipage}[t]{0.45\textwidth}
  \begin{center}
    \begin{tabular}{cc}
      \hline
      \# ref. & $\bar u_{1,h}(10^{-3})$ \\
      \hline\\
      0 & 1.3191E-03 \\
      1 & 1.3865E-03 \\
      2 & 1.4088E-03 \\
      3 & 1.4153E-03 \\
      4 & 1.4171E-03 \\
      \hline\\
    \end{tabular}
    \end{center}
    \caption{\noindent Mean bulk concentration in $\Omega_{1,h}$ with desorption coefficient $\tilde k_{d,1}=10^{-3}$ for different refinement levels.\label{tab:desorpH}}
  \end{minipage}
\end{table}

We now fix the value $\varepsilon=10^{-3}$ and change the number of uniform grid refinements, obtaining different mesh sizes $h_i$, $i=0,1,\ldots,4$. The obtained values of $\bar u_{1,h_i}(10^{-3})$ are given in Table~\ref{tab:desorpH}. These numbers show a linear convergence behavior with a contraction factor $\sim 0.3$, which indicates that we have stable (almost second order) convergence w.r.t  $h$.
On refinement level 3 we have an estimated relative error of approximately $2\cdot 10^{-3}$.

\section{Discussion and outlook} \label{sectdisc}
In this paper  a coupled system of elliptic partial differential equations is studied, in which two advection-diffusion equations in bulk subdomains are coupled, via adsorption-desorption terms, with an advection-diffusion equation on the interface between these bulk domains. This system of equations is motivated by models for surfactant transport in two-phase flow problems. A main result  is the well-posedness of a certain weak formulation of this system of equations.  We introduce an unfitted finite element method for the discretization of this problem. The method uses three trace spaces of one standard bulk finite element space. The interface is approximated by the zero level of a finite element function. For this finite element function and for the finite element functions used in the bulk space piecewise linears as well as higher order polynomials can be used. For this discretization method optimal error bounds are derived. We consider the following topics to be of interest for
 future research. \\ The unfitted finite element discretization leads to linear systems of equations that may become ill-conditioned. The discretization method treated in this paper can be combined with a stabilization procedure as recently presented in \cite{BurmanHansbo}. For the case of one bulk domain the method treated in this paper is the same as the unstabilized method in  \cite{BurmanHansbo}. The stabilization procedure considered in \cite{BurmanHansbo} does not rely on the fact that there is only one bulk domain, and looking at the analysis of the stabilized method in \cite{BurmanHansbo} we expect that it will work for the case with two bulk domains, too. \\ The use of an unfitted finite element technique becomes particularly attractive for time dependent problems with an evolving interface. In the applications that we have in mind (surfactants in two-phase incompressible flow) such time dependent problems are highly relevant. Hence, the extension of the method studied in this paper to a time-
dependent
coupled system as in \eqref{total} is an interesting topic. Such an extension may be based on space-time trace finite element methods that are studied in the recent papers \cite{ORXsinum,ORsinum2}, which deal with advection-diffusion equations on evolving surfaces, but without a coupling to a bulk phase. \\  Finally we note that if for the interface approximation a finite element polynomial of degree $q \geq 2$ is used, the zero level of this function is not directly available. A suitable quadrature for approximating integrals of $\Gamma_h$ has to be developed. This is not straightforward. For special cases in which the distance function to $\Gamma$ is known, computable parametrizations as treated in \cite{Demlow09} can be used. A more general approach has recently been presented in \cite{refJoergAR}. A similar quadrature problem arises for the tetrahedra in the bulk domain that are cut by the interface $\Gamma_h$.
\bibliographystyle{siam}
\bibliography{literatur}
 \end{document}